\newtheorem{theorem}{Theorem}[section]
\newtheorem{proposition}{Proposition}[section]
\newtheorem{lemma}{Lemma}[section]
\newtheorem{defn}{Definition}[section]
\newtheorem{corollary}{Corollary}[section]
\newtheorem{example}{Example}
\DeclareMathOperator{\Sym}{Sym}
\title{Cyclic Cubic Points on Higher Genus Curves}
\author{James Rawson}
\address{James Rawson, Mathematics Institute, Zeeman Building, University of Warwick, Coventry, CV4 7AL}
\email{james.rawson@warwick.ac.uk}
\thanks{The author is supported by the Warwick Mathematics Institute Centre for Doctoral Training, and gratefully acknowledges funding from the UK Engineering and Physical Sciences Research Council (Grant number: EP/W523793/1)}
\subjclass{11G30, 11G35; 14G05, 14H25}
\begin{document}

\begin{abstract}
 The distribution of degree $d$ points on curves is well understood, especially for low degrees. We refine this study to include information on the Galois group in the simplest interesting case: $d = 3$. For curves of genus at least 5, we show cubic points with Galois group $C_3$ arise from well-structured morphisms, along with providing computable tests for the existence of such morphisms. We prove the same for curves of lower genus under some geometric or arithmetic assumptions.
\end{abstract}

\maketitle

\section{Introduction}
Let $X$ be a smooth, projective, geometrically integral curve of genus $g \geq 2$ defined over a number field $K$. A cubic point of $X$ is an element of $X(L) \setminus X(K)$ for $L$ a degree 3 extension of $K$, and the Galois orbit of such a point gives rise to a $K$-point on $\Sym^3 X$. Conversely, a $K$-point of $\Sym^3 X$ is either the orbit of a cubic point, a rational point plus the orbit of a quadratic point, or the sum of three rational points. Using the natural map to the Jacobian of $X$, Abramovich and Harris show that $\Sym^3 X(K)$ is infinite if and only if $X$ has a morphism of degree at most 3 to either a rational curve or an elliptic curve with positive rank \cite{AbrHar}. 

Similarly, a cyclic cubic point can be defined as an element of $X(L) \setminus X(K)$ where $L$ is a degree 3 extension of $K$ and the Galois group of $L / K$ is cyclic. There is also a correspondence between orbits of such points and $K$-points of the quotient variety $X^3 / C_3$ - the Galois orbit of a cyclic cubic point gives a $K$-point of $X^3 / C_3$, and a $K$-point of $X^3 / C_3$ is either the orbit of a cyclic cubic point or three rational points. By analogy with the result of Abramovich and Harris, it might be expected that there are infinitely many points on $X^3 / C_3$ if and only if $X$ has a degree 3 map to a rational curve or elliptic curve with Galois group $C_3$. This is false, as shown by this example.

\begin{example}
The curve, given by the following affine equation, has infinitely many cyclic cubic points without admitting a morphism to $\mathbb{P}^1$ or an elliptic curve with Galois group $C_3$: $$y^3 - 4(27 x^{10} + x^3 - 16x + 16)y = 16x^5 (27 x^{10} + x^3 - 16x + 16).$$
As it has genus $10$, the Castelnuovo--Severi inequality (see \Cref{csineq}) shows it can only triple cover either $\mathbb{P}^1$ or an elliptic curve, as such curves have genus at most 7. Projecting onto the $x$-coordinate gives a map to $\mathbb{P}^1$, and by Castelnuovo--Severi, this is again unique. The polynomial $y$ satisfies over $\mathbb{Q}(x)$ has discriminant $256(27x^{10} + x^3 - 16x + 16)^2(x^3 - 16x + 16)$, which is not a square, and so the extension is not Galois.  However, this is a square for any rational $x$ such that $x^3 - 16x + 16$ is a perfect square, and so the points coming from these fibres of the map are cyclic cubic. The condition $x^3 - 16x + 16 = \Delta^2$ is the elliptic curve with Cremona label 37a1, which has rank 1, and so the curve has infinitely many cyclic cubic points. 
\end{example}

For curves of genus at least 5, we show that this kind of construction is the only source of infinite families of cyclic cubic points, generalising the work in \cite{DerNaj}.

\begin{theorem}
Let $X / K$ be a curve of genus $g \geq 5$, then $X$ has infinitely many cyclic cubic points if and only if there exists a degree 3 morphism $f : X \to Y$ such that $Y$ is a $\mathbb{P}^1$ or a positive rank elliptic curve, and the ``discriminant curve'' of $f$ (see \Cref{discrim}) is also a $\mathbb{P}^1$ or a positive rank elliptic curve.
\label{mainthm1}
\end{theorem}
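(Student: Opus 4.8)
The plan is to prove the two implications separately, using throughout that $X(K)$ --- and $X(L)$ for any cubic field $L$ --- is finite by Faltings, since $g \ge 5 \ge 2$.

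For the ``if'' direction, suppose $f \colon X \to Y$ is degree $3$ with $Y$ and its discriminant curve $Z$ each a $\mathbb{P}^1$ or a positive rank elliptic curve. By the construction of $Z$ (\Cref{discrim}), a point $Q \in Y(K)$ with reduced fibre lies in the image of $Z(K)$ exactly when $f^{-1}(Q)$ is a cyclic cubic divisor or a sum of three rational points. Since $Z(K)$ is infinite and $Z \to Y$ is finite, infinitely many $Q \in Y(K)$ have this property; after deleting the finitely many $Q$ with non-reduced fibre and the finitely many with totally split fibre (finite, since otherwise $X(K)$ would be infinite), the remaining infinitely many $Q$ each have a genuine cyclic cubic point of $X$ as their fibre. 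That finishes this direction.

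For the ``only if'' direction, I would start from the observation that infinitely many cyclic cubic points are in particular infinitely many cubic points, so \cite{DerNaj} already yields a degree $3$ morphism $f\colon X \to Y$ with $Y$ a $\mathbb{P}^1$ or a positive rank elliptic curve. The organising dichotomy is then: call a cyclic cubic point $P$ (with residue field $L$) \emph{fibral} for a degree $3$ morphism $g\colon X\to Y'$ if $g(P)\in Y'(K)$ --- equivalently, the Galois orbit of $P$ equals a whole fibre $g^{-1}(Q)$ with $Q\in Y'(K)$, which is then forced to be cyclic, so $Q$ lifts to the discriminant curve $Z'$ of $g$. If infinitely many of our cyclic cubic points are fibral for some fixed degree $3$ morphism $X\to Y'$ with $Y'$ of the allowed type, then $Z'(K)$ is infinite, hence $Z'$ --- a smooth projective curve with infinitely many $K$-points --- is a $\mathbb{P}^1$ or positive rank elliptic curve, and $g$ is the required morphism. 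So the whole problem reduces to showing this fibral alternative must occur, which is the crux.

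To do this I would pass to the Jacobian in the style of Abramovich--Harris and \cite{DerNaj}: each cyclic cubic point is a $K$-point of $W := X^3/C_3$, and composing $W \to \Sym^3 X \to \operatorname{Jac}(X) =: J$ (the Abel--Jacobi map on degree $3$ divisors, birational onto its image $W_3$ since $X$ has no $g^2_3$) sends our points to infinitely many $K$-points of $W_3$. By Faltings--Mordell--Lang their Zariski closure contains a positive dimensional translate $t+B \subseteq W_3$ of an abelian subvariety along which cyclic cubic effective divisor classes accumulate densely (or, degenerately, infinitely many of the points collapse to a single class of $h^0 \ge 2$, giving a $g^1_3$, i.e.\ a degree $3$ morphism to $\mathbb{P}^1$). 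One then argues that such a $t+B$ can only be the family of fibres $\{[g^{-1}(Q)] : Q \in Y'\}$ of a genuine degree $3$ morphism $g\colon X\to Y'$: first $\dim B \le 2$, else $W_3$ is an abelian threefold and the Albanese forces $g \le 3$; then, examining the incidence variety $I = \{(P,s)\in X\times(t+B) : P\in D_s\}$ with its degree $3$ map to $t+B$ --- whose generic fibre must be split or a $C_3$-extension because cyclic specialisations are dense --- one eliminates every remaining possibility using Castelnuovo--Severi (\Cref{csineq}): a split generic fibre makes an abelian variety of dimension $\le 2$ dominate $X$, and each degenerate shape of $D_s$ (a fixed rational subdivisor plus a pencil; $X$ hyperelliptic or bielliptic; two distinct low-degree morphisms) is incompatible with the existence of $g$ once $g \ge 5$. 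Finally $Y'(K)$ is infinite (it receives the images of our points), so $Y'$ is of the allowed type, and by construction all but finitely many cyclic cubic points accumulating on $t+B$ are fibral for $g$ --- so we are back in the fibral case and done. The genuinely hard step, and the only place the precise bound $g \ge 5$ is used, is this classification of the accumulation locus $t+B$ together with the Castelnuovo--Severi bookkeeping that excludes the low-genus-compatible alternatives.
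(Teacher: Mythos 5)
Your overall architecture is the one the paper uses to prove this statement (as \Cref{classgen5}): the ``if'' direction is the specialisation-of-the-discriminant argument of \Cref{mapinf}; for ``only if'' you push the infinitely many points of $(X^3/C_3)(K)$ into $W^{(3)}_X$, and Faltings splits you into the case where infinitely many points collapse into a single fibre of the Abel--Jacobi map (a positive-dimensional linear system, i.e.\ \Cref{finmap}) and the case where they accumulate on a positive-dimensional translate of an abelian subvariety, which must then be identified as the family of fibres of a degree $3$ morphism to an elliptic curve before \Cref{mapinf} can be applied again. That skeleton matches the paper exactly.

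The gap is in the step you yourself flag as the crux. You dispose of the classification of the accumulation locus $t+B$ with ``Castelnuovo--Severi bookkeeping'' on an incidence variety, but the tools you name are not the ones that make this work, and the paper spends all of Section 3 on precisely this point because the original argument in \cite{AbrHar} is incomplete. Concretely: (i) excluding $\dim B = 2$ is not a Castelnuovo--Severi statement --- the proof of \Cref{abrhar} needs the Clifford/Riemann--Roch bound $\dim|2D| \le 2$, the birationality of $|2D|$, \Cref{good_intersection}, and the fact that the induced map $A \to |2D| \cong \mathbb{P}^2$ factors through the Kummer surface of $A$, which admits no generically injective map to $\mathbb{P}^2$; (ii) even when $\dim B = 1$, showing that the divisors $D_s$ are literally the fibres of a single degree $3$ morphism (rather than a family in which a general point of $X$ occurs in several $D_s$, or with multiplicity) is exactly the content of \Cref{fibre_lemma} together with the $\dim|3D|$ analysis in Section 3, and this ``moreover'' clause is indispensable before you may quote \Cref{mapinf}. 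Your sketch engages with neither point. Two smaller remarks: \cite{DerNaj} does not supply the initial degree $3$ morphism you claim (that is \cite{AbrHar}/\cite{KadVog}, and in any case you never use it); and both \cite{AbrHar} and your collapsed-fibre case a priori only give a map of degree \emph{at most} $3$, so the degree $2$/base-point possibility must be excluded --- the paper does this by observing that a Galois-stable cyclic cubic orbit containing a moving fixed point would force $X(K)$ to be infinite. So the route is the right one, but the load-bearing step is asserted rather than proved.
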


We also provide computable conditions on $X$ that show no morphisms to $\mathbb{P}^1$ or an elliptic curve with this property exist.

\begin{theorem}
Let $X$ be a curve of genus $g \geq 5$. Assume the following
\begin{itemize}
 \item $X$ has no $K$-rational automorphism of order 3
 \item No $K$-rational, geometrically integral, unramified double cover of $X$ has such an automorphism
 \item For any point $P \in X(\bar{K})$, the number of points $Q \in X(\bar{K})$ such that $3P - 3Q = \mathrm{div}(f)$ is strictly less than $g$.
\end{itemize}
Then $X$ has only finitely many points defined over cyclic cubic extensions of $K$.
\label{mainthm_comp}
\end{theorem}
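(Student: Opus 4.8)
The plan is to prove the contrapositive by invoking \Cref{mainthm1}. Suppose $X$ has infinitely many cyclic cubic points; then there is a degree $3$ morphism $f \colon X \to Y$ for which both $Y$ and the discriminant curve $D$ (of \Cref{discrim}) are a $\mathbb{P}^1$ or a positive rank elliptic curve, and the goal is to show that this forces one of the three hypotheses to fail. Let $M$ be the Galois closure of the function field extension $K(X)/K(Y)$, let $G = \mathrm{Gal}(M/K(Y))$ — a transitive subgroup of $S_3$, hence $C_3$ or $S_3$ — and let $\tilde X$ be the smooth projective model of $M$, which sits in towers $\tilde X \to X \to Y$ and $\tilde X \to D \to Y$, the curve $D$ having function field $M^{A_3}$, where $A_3 \subset S_3$ is the unique subgroup of order $3$.

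If $G = C_3$, then $K(X)/K(Y)$ is itself Galois and a generator of $\mathrm{Gal}(K(X)/K(Y))$ is an order $3$ automorphism of $X$ fixing $K(Y) \supseteq K$ pointwise, hence a $K$-rational automorphism of order $3$; this contradicts the first hypothesis. So assume $G = S_3$. First I would check that $D$ is geometrically integral: otherwise the geometric monodromy of $f$ is a proper subgroup of $G$, which forces $D \cong Y \times_K \mathrm{Spec}(K')$ for some quadratic field $K'/K$, a curve with no $K$-rational points — contradicting that $D$ is a $\mathbb{P}^1$ or positive rank elliptic curve (each of which has infinitely many $K$-points). Hence $\tilde X$ is geometrically integral, and since $A_3 \triangleleft S_3$, a generator of $\mathrm{Gal}(M/M^{A_3}) \cong C_3$ gives a $K$-rational automorphism of $\tilde X$ of order $3$. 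It then remains to control the double cover $\tilde X \to X$, which is $K$-rational and geometrically integral; a local computation with the $S_3$-action on the tower shows that its branch locus is exactly the set of points of $X$ that are unramified for $f$ but lie over a branch point of $f$ — equivalently, one point taken from each fibre of $f$ of type $(2,1)$. Write $s$ for the number of such fibres.

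If $s = 0$, then $\tilde X \to X$ is unramified, so $\tilde X$ is a $K$-rational, geometrically integral, unramified double cover of $X$ admitting a $K$-rational automorphism of order $3$; this contradicts the second hypothesis. If $s \geq 1$, then applying Riemann--Hurwitz to $D \to Y$ (with $g_D \in \{0,1\}$) gives $s = 2g_D + 2 - 4 g_Y$, which is non-positive when $Y$ is elliptic — impossible — and lies in $\{2,4\}$ when $Y = \mathbb{P}^1$. In the latter case Riemann--Hurwitz for $f$ shows $f$ has $t = g + 1 - g_D \geq g$ totally ramified fibres $3P_1, \dots, 3P_t$ with the $P_i \in X(\bar K)$ pairwise distinct; since the divisors $3P_i$ are all linearly equivalent (being $f^*$ of a fixed class on $\mathbb{P}^1$), taking $P = P_1$ yields at least $t \geq g$ distinct points $Q \in X(\bar K)$ with $3P - 3Q = \mathrm{div}(h)$ for some $h$, contradicting the third hypothesis.

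I expect the crux to be the middle paragraph: identifying the branch locus of the Galois-closure cover $\tilde X \to X$ precisely in terms of the fibre types of $f$, and identifying the curve with function field $M^{A_3}$ with the discriminant curve of \Cref{discrim} — in particular handling the geometrically disconnected (``twisted'') case and checking that a $(2,1)$ fibre of $f$ contributes an odd-order point to the discriminant, so that $s$ is genuinely the number of branch points of $D \to Y$. Once this dictionary and the two Riemann--Hurwitz computations are in place the rest is routine, although one should also confirm that fibres of $f$ over non-rational closed points of $Y$ do not disturb the count of totally ramified fibres used against the third hypothesis.
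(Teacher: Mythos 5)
Your proof is correct and follows essentially the same route as the paper, which obtains the theorem by combining Theorem~\ref{classgen5} with Propositions~\ref{weier} and~\ref{gen1}: your Riemann--Hurwitz count of totally ramified fibres is the content of Proposition~\ref{weier}, and your Galois closure $\tilde X$ is exactly the fibre product $X \times_Y Y_{\Delta(f)}$ used in Proposition~\ref{gen1}. The only cosmetic difference is that you organise the case analysis by the Galois group and the ramification of $\tilde X \to X$ rather than by the genus of $Y$.
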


For lower genus curves, it is possible to get similar results, but at the cost of much stronger geometric constraints.

\begin{theorem}
Let $X$ be a curve of genus $g \geq 3$. Assume the following
\begin{itemize}
 \item For $g = 3$, the Bombieri--Lang conjecture holds for $X^3 / C_3$
 \item The Jacobian of $X$ contains no elliptic curves
 \item For any point $P \in X(\bar{K})$, the number of points $Q \in X(\bar{K})$ such that $3P - 3Q = \mathrm{div}(f)$ is strictly less than $g$.
\end{itemize}
Then $X$ has only finitely many points defined over cyclic cubic extensions of $K$.
\label{mainthm2}
\end{theorem}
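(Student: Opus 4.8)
The plan is to recast the statement, via the correspondence recalled in the introduction, as the finiteness of $(X^3/C_3)(K)$; since $g\ge 2$ the contribution of triples of $K$-rational points is finite by Faltings, so it suffices to bound the cyclic cubic orbits. Composing the degree-$2$ quotient $X^3/C_3\to\Sym^3 X$ with the Abel--Jacobi map gives $\psi\colon X^3/C_3\to W_3\subseteq J(X)=\mathrm{Pic}^3 X$, whose fibre over a class $[D]$ is a double cover of the complete linear system $|D|$, branched along its intersection with the locus of non-reduced divisors. As $\deg D=3$ and $g\ge 3$ force $h^0(D)\le 2$ (Clifford / Riemann--Roch), this fibre is finite unless $|D|$ is a $g^1_3$. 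So I must bound two kinds of $K$-points: those over the locus where $\psi$ has finite fibres, controlled through $W_3(K)$; and those lying over classes $[D]$ with $|D|$ a $g^1_3$, controlled fibre by fibre. For a base-point-free $g^1_3$, say $f\colon X\to\mathbb P^1$ with $r$ totally ramified fibres, Riemann--Hurwitz gives $r_2=2g+4-2r$ fibres of type $2P+Q$, and the corresponding fibre of $\psi$ is the double cover of $\mathbb P^1$ branched over exactly these (at a totally ramified fibre the discriminant vanishes to even order), of genus $g+1-r$. Since any two totally ramified fibres $3P,3Q$ are both linearly equivalent to $f^*(\mathrm{pt})$, for such a $P$ at least $r$ points $Q$ satisfy $3P\sim 3Q$, so the third hypothesis forces $r<g$; hence every such fibre of $\psi$ is a curve of geometric genus $\ge 2$, with only finitely many $K$-points by Faltings. (On a hyperelliptic $X$, a system $g^1_2+P$ with a base point contributes no cyclic cubic orbit, its divisors having the shape $Q+\iota Q+P$, and its associated double cover again has genus $\ge 2$.)

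For $g\ge 5$ this combines with \Cref{mainthm1} to give the result at once. If $X$ had infinitely many cyclic cubic points, there would be a degree-$3$ map $f\colon X\to Y$ with $Y$ and the discriminant curve of $f$ (see \Cref{discrim}) each a $\mathbb P^1$ or a positive rank elliptic curve. If $Y$ were a positive rank elliptic curve, $f^*$ would embed an isogenous copy of $Y$ into $J(X)$, contrary to the second hypothesis; so $Y=\mathbb P^1$ and $f$ is a base-point-free $g^1_3$, unique by \Cref{csineq}. Its discriminant curve is the fibre of $\psi$ analysed above, of genus $g+1-r\ge 2$ by the third hypothesis --- contradicting that it is a $\mathbb P^1$ or a positive rank elliptic curve.

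For $g=4$, where \Cref{mainthm1} does not apply, $W_3\subsetneq J(X)$ is a proper threefold, and I claim it contains no translate of a positive-dimensional abelian subvariety: the relevant Castelnuovo--Severi/Debarre--Fahlaoui bound rules out translated abelian surfaces in $W_3$, while an elliptic-curve translate would force a map of degree $\le 3$ from $X$ to an elliptic curve, hence an elliptic curve in $J(X)$, excluded by the second hypothesis. Faltings' theorem on rational points of subvarieties of abelian varieties then makes $W_3(K)$ finite. Away from the finitely many $g^1_3$'s (at most two for $g=4$, by the theory of trigonal curves; the base-point-ful systems contributing nothing as above) $\psi$ has finite fibres, so all but finitely many cyclic cubic orbits lie over a $g^1_3$, and these are bounded by the third hypothesis and Faltings as in the first paragraph. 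Hence $(X^3/C_3)(K)$ is finite.

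For $g=3$ one has $W_3=J(X)$, so the above collapses to the observation that $X^3/C_3\to J(X)$ is a degree-$2$ cover, which alone says nothing; here the Bombieri--Lang hypothesis is essential. The second hypothesis forces $J(X)$ to contain no proper nonzero abelian subvariety (in dimension $3$ any such splits off an elliptic curve), so the branch divisor of $X^3/C_3\to J(X)$ is big and $X^3/C_3$ is of general type; the assumption that Bombieri--Lang holds for it gives that $(X^3/C_3)(K)$ is not Zariski dense. One then runs a d\'evissage over the dimension of the exceptional locus, using that the second and third hypotheses make \emph{every} proper subvariety of $X^3/C_3$ carry only finitely many $K$-points: a curve of geometric genus $\le 1$ in $X^3/C_3$ maps to a point of $J(X)$ (rational curves are constant, and $J(X)$ has no elliptic curve), hence lies in a fibre of $\psi$, which has genus $\ge 2$ --- impossible; and a surface in $X^3/C_3$ maps to a curve of genus $\ge 2$ or to a divisor in $J(X)$ with finitely many $K$-points (Faltings--Mordell--Lang, as $J(X)$ has no proper nonzero abelian subvariety), in either case reducing to finitely many $\psi$-fibres of genus $\ge 2$. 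The main obstacles I anticipate are: for $g=4$, isolating the precise Castelnuovo--Severi/Debarre--Fahlaoui input that excludes translated abelian surfaces in $W_3$ and identifies the remaining elliptic case with a low-degree elliptic quotient; and for $g=3$, the bookkeeping of the Bombieri--Lang d\'evissage, in particular establishing general type and verifying that no subvariety of $X^3/C_3$ beyond the $\psi$-fibres already handled by the third hypothesis can acquire infinitely many rational points.
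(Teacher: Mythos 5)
Your proposal is correct and follows essentially the same route as the paper: compose with the Abel--Jacobi map, use Faltings together with the Abramovich--Harris/Debarre--Fahlaoui exclusion of abelian subvarieties of $W^{(3)}_X$ for $g \geq 4$ (resp.\ Bombieri--Lang plus simplicity of $J_X$ for $g = 3$) to make the image of the rational points finite, and then kill the remaining trigonal fibres by playing the Riemann--Hurwitz count of totally ramified points against the third hypothesis. The only real differences are in packaging: where the paper concludes via Theorem~\ref{finmap} and Proposition~\ref{weier} that the discriminant curve would have to be a $\mathbb{P}^1$ or a positive rank elliptic curve and derives a contradiction from the existence of $g$ points with $3P_i \sim 3P_j$, you compute the genus of that fibre directly as $g+1-r \geq 2$ and apply Faltings fibrewise, and your $g=3$ d\'evissage spells out what the paper compresses into the observation that the image of the exceptional subvariety in the Jacobian contains no abelian subvarieties (the general-type claim for $X^3/C_3$, which you sketch via bigness of the branch divisor, is simply cited from \cite{Raw} in the paper).
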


Using arithmetic data, the genus can be lowered even further to 2.
\begin{theorem}
Let $X$ be a curve of genus $g \geq 2$. Assume the following
\begin{itemize}
 \item The rank of the Jacobian of $X$ is 0
 \item If $g \geq 3$, for any point $P \in X(\bar{K})$, the number of points $Q \in X(\bar{K})$ such that $3P - 3Q = \mathrm{div}(f)$ is strictly less than $g$.
 \item If $g = 2$, $J_X$ has no 3-isogenies and $X$ does not have an automorphism of order 3.
\end{itemize}
Then $X$ has only finitely many points defined over cyclic cubic extensions of $K$.
\label{mainthm3}
\end{theorem}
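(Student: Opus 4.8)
The plan is to reduce the finiteness of cyclic cubic points to the finiteness of rational points on the quotient $W = X^3/C_3$, and then exploit the rank-$0$ hypothesis on $J_X$ to control $W(K)$. As noted in the introduction, a cyclic cubic point of $X$ produces a $K$-point of $W$, and conversely a $K$-point of $W$ is either such an orbit or a triple of rational points; since $X$ has genus $\geq 2$, Faltings gives $|X(K)| < \infty$, so it suffices to prove $W(K)$ is finite. I would study the natural map $\phi\colon W \to J_X$ sending an orbit $\{P, \sigma P, \sigma^2 P\}$ (equivalently a point of $\mathrm{Sym}^3 X$ fixed by the residual $C_3$-action) to the class $[P + \sigma P + \sigma^2 P] - 3[P_0]$ for a fixed basepoint $P_0 \in X(K)$ (or, if $X$ has no rational point, after a harmless base extension or by working with a degree-one divisor class). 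Because $\mathrm{rk}\, J_X(K) = 0$, the group $J_X(K)$ is finite, so $\phi(W(K))$ is finite, and $W(K)$ is the union of finitely many fibres $\phi^{-1}(c)$ for $c \in J_X(K)$.

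Next I would analyse a single such fibre. The fibre of the Abel–Jacobi map $\mathrm{Sym}^3 X \to J_X$ over a class $c$ of degree $3$ is the linear system $\mathbb{P}(H^0(X, L_c))$ for the corresponding line bundle $L_c$ of degree $3$; by Riemann–Roch and the genus hypothesis ($g \geq 2$, and for $g = 2$ degree $3 = 2g - 1$ gives $h^0 = 2$), this projective space has dimension $0$, $1$, or $2$. If every relevant fibre is $0$-dimensional we are done immediately. A positive-dimensional fibre forces $X$ to carry a $g^1_3$ or $g^2_3$, i.e.\ a degree-$\leq 3$ map to $\mathbb{P}^1$; the $g^2_3$ case puts $X$ on a plane cubic (genus $1$), excluded, and a $g^1_3$ with $g \geq 3$ would be handled by showing the $C_3$-fixed locus inside such a fibre is finite unless the $g^1_3$ is Galois — but a Galois $g^1_3$ is an order-$3$ automorphism, which for $g \geq 3$ contradicts the third hypothesis (taking $Q$ to range over a fibre of the quotient map gives $g$ points with $3P - 3Q$ principal) and for $g = 2$ is excluded by the final hypothesis. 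The residual subtlety is that the $C_3$-action on $W$ we must track is the one permuting the three factors of $X^3$, and a point of $\mathrm{Sym}^3 X$ lying in $W(K)$ corresponds to an \emph{effective} cyclic cubic divisor, so the fibre of $\phi$ we intersect is really the subscheme of $C_3$-stable length-$3$ subschemes inside $\mathbb{P}(H^0(X,L_c))$; I would show this stable locus is finite in each case not covered by an honest order-$3$ automorphism or a $3$-isogeny of $J_X$.

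For $g = 2$ the above needs the extra hypotheses precisely because $h^0(L_c) = 2$ for every degree-$3$ class, so $\phi$ has all fibres isomorphic to $\mathbb{P}^1$ and $W$ is (birationally) a $\mathbb{P}^1$-bundle over $J_X$; here finiteness of $W(K)$ cannot come from dimension alone. Instead I would argue that a $\mathbb{P}^1$ worth of cyclic cubic divisors in one fibre would give a map from $\mathbb{P}^1$ (or a $C_3$-quotient thereof) into the fibre whose total space, over the finite set $J_X(K)$, still has infinitely many cyclic cubic points only if the corresponding family of $C_3$-actions is nontrivial — and a nontrivial such family produces either an order-$3$ automorphism of $X$ or a $3$-isogeny on $J_X$ (the two ways a $C_3$-structure on a degree-$3$ pencil can be realised), both excluded. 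The main obstacle I anticipate is exactly this $g = 2$ analysis: making rigorous the claim that a positive-dimensional family of cyclic cubic divisors over a fixed Jacobian class must descend to one of those two algebraic structures, rather than, say, an analytic family with no algebraic origin; I expect this to require a careful monodromy/descent argument on the discriminant double cover of the pencil, using that a square discriminant over a $1$-parameter base which is not identically square varies, so only finitely many members are Galois — mirroring the mechanism in the Example of the introduction.
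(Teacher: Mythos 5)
Your first half matches the paper's route: rank zero makes $J_X(K)$, hence $W^{(3)}_X(K)$, finite, so infinitely many cyclic cubic points must accumulate in finitely many fibres of $\Sym^3 X \to W^{(3)}_X$, and a positive-dimensional fibre forces a base-point-free $g^1_3$, i.e.\ a degree $3$ map $f\colon X \to \mathbb{P}^1$ (the $g^2_3$ case being excluded by the genus). This is exactly Theorem~\ref{finmap}, and up to this point your proposal is sound.

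The gap is in the second half, where you convert ``a pencil containing infinitely many cyclic cubic divisors'' into the stated hypotheses. You assert that the $C_3$-stable locus of such a pencil is finite unless the $g^1_3$ is Galois, and later that a discriminant which is ``not identically square varies, so only finitely many members are Galois.'' Both claims are false, and the paper's introductory example is a direct counterexample: a non-Galois degree $3$ map can have infinitely many cyclic cubic fibres, because the locus where the specialised discriminant becomes a square is the image of the rational points of the double cover $\mathbb{P}^1_{\Delta(f)}$ of Definition~\ref{discrim}, which may be rational or a positive rank elliptic curve. The correct dichotomy (Theorem~\ref{mapinf}) is that $f$ has infinitely many cyclic cubic fibres if and only if $\mathbb{P}^1_{\Delta(f)}$ is a $\mathbb{P}^1$ or a positive rank elliptic curve, not if and only if $f$ is Galois. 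The paper then reaches the stated hypotheses through a Riemann--Hurwitz count (Proposition~\ref{weier}): forcing the discriminant curve to have genus at most $1$ leaves at least $g$ totally ramified points $P_1, \dots, P_g$ of $f$, whose triples are fibres, giving $3P_i \sim 3P_j$ and contradicting your second hypothesis. Your own attempted use of that hypothesis (``taking $Q$ to range over a fibre of the quotient map gives $g$ points with $3P - 3Q$ principal'') does not work: a fibre of a degree $3$ map has at most $3$ points, not $g$, and $3P \sim 3Q$ for two points of a common fibre only holds when both are totally ramified. For $g = 2$ your guessed conclusion (order-$3$ automorphism or $3$-isogeny) is the right one, but it is obtained by analysing the index-$3$ ramification points of $f$ and exhibiting explicit $K$-rational $3$-torsion subgroups (Proposition~\ref{gen2weier}), not by the monodromy argument you sketch, which would again wrongly discard the non-Galois case with rational discriminant curve.
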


Alternatively, puncturing the curve and considering integral points also allows for small genus results.
\begin{theorem}
Let $X$ be a curve of genus $g \geq 2$, and $\mathcal{X}$ be an integral model of a proper open subset of $X$. Assume the following
\begin{itemize}
 \item For any point $P \in X(\bar{K})$, the number of points $Q \in X(\bar{K})$ such that $3P - 3Q = \mathrm{div}(f)$ is strictly less than $g + 1$.
 \item If $g = 2$, assume that complement of (the generic fibre of) $\mathcal{X}$ in $X$ contains at least 4 $K$-rational points.
\end{itemize}
Then $X$ has only finitely many integral points defined over cyclic cubic extensions of $K$.
\label{mainthm4}
\end{theorem}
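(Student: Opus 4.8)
The plan is to run the same structural dichotomy that underlies \Cref{mainthm1,mainthm2,mainthm3}, but to feed it Siegel's theorem on integral points of affine curves (and, when the target is elliptic, its semiabelian refinement due to Vojta and Corvaja--Zannier) in place of Faltings, Bombieri--Lang or Chabauty. Write $U$ for the generic fibre of $\mathcal{X}$ and $D_{\infty} = X \setminus U$ for its boundary, which is nonempty since $\mathcal{X}$ is a proper open of $X$. A cyclic cubic integral point of $\mathcal{X}$, together with its two $C_{3}$-conjugates, is an $S$-integral point of a natural integral model of $U^{3}/C_{3}$, so it suffices to prove that this set of integral points is finite for every finite $S$. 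Suppose not; then in particular $X$ has infinitely many cyclic cubic points, so the structural result behind \Cref{mainthm1} supplies a degree $3$ morphism $f : X \to Y$ with $Y \cong \mathbb{P}^{1}$ or an elliptic curve, whose discriminant curve $Z$ (see \Cref{discrim}) is a double cover $\pi : Z \to Y$ with $C_{3}$-Galois closure $\widetilde{X} \to Z$, such that all but finitely many of the cyclic cubic points in question arise, through this $C_{3}$-cover, from points of $Z(K)$.

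The extra content of integrality is that a cyclic cubic point of $\mathcal{X}$ avoids $D_{\infty}$ together with all of its conjugates, hence so does the whole fibre $f^{-1}(\pi(z))$; equivalently the corresponding $z$ lies in the affine open $Z^{\circ} := Z \setminus \pi^{-1}(f(D_{\infty}))$, and the $S$-integral structure of $\mathcal{X}$ transports to make $z$ an $S'$-integral point of a model of $Z^{\circ}$, for a suitably enlarged finite set $S'$. Thus the set we must bound injects, up to finitely many exceptions, into $Z^{\circ}(\mathcal{O}_{K,S'})$. If $Z$ has genus $\geq 1$, Siegel's theorem (or Faltings) gives finiteness immediately, since $Z^{\circ}$ is a proper affine open. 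If $Z \cong \mathbb{P}^{1}$, I claim $\lvert Z \setminus Z^{\circ}\rvert \geq 3$, so Siegel again applies: $\pi^{-1}(f(D_{\infty}))$ is supported on at least $\lvert f(D_{\infty})\rvert$ points, with fewer only where points of $f(D_{\infty})$ are branch points of $\pi$, and over a branch point of the discriminant double cover the fibre of $f$ is non-reduced and so contains at most two distinct points of $D_{\infty}$, while a non-branch point of $f(D_{\infty})$ contributes two points to the complement of $Z^{\circ}$. When $Y$ is elliptic one argues identically with $Z$ replaced by its image in the generalized Jacobian of $Y$ relative to $f(D_{\infty})$, which is genuinely semiabelian because $f(D_{\infty}) \neq \emptyset$; the integral points of the curve $Z$ inside it are finite unless $Z$ is a translate of a connected algebraic subgroup, which the discriminant construction rules out once the source has genus $\geq 2$.

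It remains to kill the borderline configuration $Z \cong \mathbb{P}^{1}$ with $\pi^{-1}(f(D_{\infty}))$ collapsed onto the two ramification points of $\pi$, and this is exactly where the hypotheses are spent. For $g = 2$, the assumption that $D_{\infty}$ contains at least $4$ $K$-rational points, combined with $\deg f = 3$ and the non-reducedness of $f$ over a branch point of $\pi$ noted above, forces either $\lvert f(D_{\infty})\rvert \geq 3$ or an extra non-branch point in $f(D_{\infty})$; in either case $\lvert Z \setminus Z^{\circ}\rvert \geq 3$. For $g \geq 3$, the bound $\deg f \leq 3$ forces the genus of $Y$ to be small by \Cref{csineq}, and the hypothesis that no $P \in X(\bar K)$ admits $g+1$ or more points $Q$ with $3P - 3Q$ principal — applied through the same linear-system/Riemann--Roch computation as in the proof of \Cref{mainthm2}, but now with one boundary point to spare — excludes precisely the degenerate configuration. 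Either way Siegel applies and the relevant set of $S$-integral points is finite, contradicting the assumption.

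The step I expect to be the main obstacle is the model-theoretic bookkeeping in the second paragraph: identifying exactly which divisor of $Z$ must be deleted so that $Z^{\circ}(\mathcal{O}_{K,S'})$ captures precisely the $S$-integral cyclic cubic points of $\mathcal{X}$ and no more, checking that $D_{\infty}$ descends cleanly through the quotient by $C_{3}$ and the double cover $\pi$ to a boundary divisor on $Z$ of the asserted size, and — in the borderline $Z \cong \mathbb{P}^{1}$ case — verifying that the ``$< g+1$'' numerology and the ``$4$ rational points'' condition are exactly, rather than approximately, what is needed. Matching the elliptic case to the $\mathbb{P}^{1}$ case, in particular confirming that $f(D_{\infty})$ is nonempty and is not forced into the ramification locus of $\pi$ so that the relevant generalized Jacobian has nontrivial toric part, is the other delicate point.
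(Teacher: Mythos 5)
Your proposal breaks at the very first step. You write ``Suppose not; then in particular $X$ has infinitely many cyclic cubic points, so the structural result behind \Cref{mainthm1} supplies a degree $3$ morphism $f : X \to Y$\dots''. But \Cref{mainthm1} (and the theorem of Abramovich--Harris it rests on) requires $g \geq 5$; for $g = 2, 3, 4$ the implication ``infinitely many cyclic cubic points $\Rightarrow$ degree $3$ morphism to $\mathbb{P}^1$ or an elliptic curve'' is simply not available without the extra hypotheses of \Cref{mainthm2,mainthm3} (Bombieri--Lang, no elliptic curves in $J_X$, or rank $0$), none of which \Cref{mainthm4} assumes. Indeed the example closing Section~5 of the rational-points discussion exhibits a genus $3$ curve with infinitely many cyclic cubic points and no degree $3$ map to $\mathbb{P}^1$ or to the relevant elliptic curves, so your step $1$ is false as stated in low genus. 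The whole point of the integral-points theorem is that integrality must be spent \emph{before} you have the morphism: the paper uses Levin's theorems (\Cref{lev_rat,lev_gen}) to show that all but finitely many integral points of degree $\leq 3$ on $\mathcal{X}$ lie on fibres of degree $3$ maps $X \to \mathbb{P}^1$, for every $g \geq 2$. This also explains why the target is always $\mathbb{P}^1$ and never an elliptic curve, so your entire generalized-Jacobian/Corvaja--Zannier branch is unnecessary. You use integrality only at the second stage, on the discriminant curve, which is too late.

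Two further mismatches in how the hypotheses are spent. First, the ``$4$ rational points in $D$'' condition for $g = 2$ is needed to invoke the stronger part of Levin's theorem (\Cref{lev_rat} requires $\#D > d = 3$ with $D$ supported on $K$-points) so that the maps $f$ descend to $K$, and then to show there are only \emph{finitely many} such $f$ (a genus $2$ curve has infinitely many trigonal pencils, an issue you do not address); it is not there to count punctures on the discriminant curve, which is how you use it. Second, the ``$< g+1$'' hypothesis is not used to exclude a borderline puncture configuration on $Z$: it is used via a sharpened form of \Cref{weier}. Once the discriminant curve is forced to be a $\mathbb{P}^1$ (genus exactly $0$ rather than $\leq 1$), the double cover has only $2$ branch points, so $f$ has at least $\frac{(2g+4) - 2}{2} = g + 1$ ramification points of index $3$, producing $g+1$ points with $3P_i \sim 3P_j$ and contradicting the hypothesis outright --- no such $f$ exists at all, and one never needs to run Siegel on $Z^{\circ}$. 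Your instinct about ``one boundary point to spare'' is pointing at the right numerology, but the argument as organised does not close.
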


\subsection{Outline}
This paper is organised into 5 sections. The first of these concerns morphisms and cyclic cubic points, especially morphisms to $\mathbb{P}^1$. The second gives a proof of a result from \cite{AbrHar} as this paper is known to have some gaps (\cite{DebFah, KadVog}). The third combines the first two to prove Theorems \ref{mainthm1} and \ref{mainthm_comp}. The fourth is devoted to understanding what happens when the genus constraint is dropped and proves Theorems \ref{mainthm2} and \ref{mainthm3}. The last concerns integral points, and proves \Cref{mainthm4}.

\subsection{Conventions}
Throughout this paper, $X$ and $Y$ will be smooth, projective, geometrically integral curves over a number field $K$ and $X$ will have genus at least 2.
We will also assume the existence of a degree 3 divisor on $X$ to define the Abel--Jacobi map from $\Sym^3 X$ to $J_X$. If there is no such divisor, then no cubic points exist, and so no cyclic cubic points exist either.

\subsection*{Acknowledgements}
The author would like to thank Samir Siksek for many productive conversations surrounding this topic, as well as his support and supervision. He also thanks Borys Kadets for interesting conversations and the anonymous referee whose detailed comments have improved this paper.

\section{Maps to $\mathbb{P}^1$}
In this section, we establish some definitions and basic theorems that will be important through the rest of the paper about morphisms between curves, especially to $\mathbb{P}^1$.

The following shorthand will be useful for many statements.
\begin{defn}
Let $f : X \to Y$ be a degree 3 morphism. The \emph{discriminant curve} of $f$, $Y_{\Delta(f)}$, is the smooth curve corresponding to the function field $K(Y)(\sqrt{\Delta})$ where $\Delta$ is the discriminant of the field extension $K(X) / K(Y)$ coming from $f$. This curve is irreducible over $K$, but may not be geometrically irreducible (for example, if $\Delta$ is a non-square constant).
\label{discrim}
\end{defn}

A condition on the discriminant curve is enough to guarantee the existence of cyclic cubic points, as follows. 

\begin{theorem}
Suppose $f : X \to Y$ is a degree 3 morphism, with $Y$ either a $\mathbb{P}^1$ or a positive rank elliptic curve. Then $f$ has infinitely many fibres which are cyclic cubic if and only if $Y_{\Delta(f)}$ is either a $\mathbb{P}^1$ or a positive rank elliptic curve.
\label{mapinf}
\end{theorem}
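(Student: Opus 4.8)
The plan is to convert cyclic cubic fibres of $f$ into $K$-rational points on the discriminant curve $Y_{\Delta(f)}$ via the classical discriminant criterion for cubic algebras, and then to read off the dichotomy from Faltings' theorem. Write $K(X) = K(Y)[T]/(g(T))$ for a monic cubic $g$ with coefficients in $K(Y)$, so that $\Delta = \mathrm{disc}(g)$ up to squares, and let $\pi : Y_{\Delta(f)} \to Y$ be the natural morphism; it has degree $2$, or degree $1$ exactly when $\Delta$ is already a square in $K(Y)$ (the Galois case). Fix the finite set $S \subset Y(K)$ consisting of the branch locus of $f$, the zero and polar loci of $\Delta$ and of the coefficients of $g$, and the branch locus of $\pi$. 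For $y \in Y(K) \setminus S$ the fibre $f^{-1}(y)$ is $\mathrm{Spec}$ of the degree-$3$ étale $K$-algebra $K[T]/(g_y(T))$, and $y$ lifts to a $K$-point of $Y_{\Delta(f)}$ if and only if $\Delta(y) \in (K^\times)^2$.

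The elementary input I would isolate is the classification of degree-$3$ étale $K$-algebras with square discriminant: such an algebra is either $K^3$ or a cyclic cubic field (in particular it is never a $K$-point together with a non-split quadratic point, since the discriminant of a linear-times-quadratic cubic is a square only when the quadratic splits). Hence, for $y \notin S$, $f^{-1}(y)$ is a cyclic cubic point precisely when $\Delta(y)$ is a square and $f^{-1}(y)$ contains no $K$-rational point. Now $g(X) \ge 2$, so by Faltings $X(K)$ is finite, and therefore only finitely many $y \in Y(K)$ have $f^{-1}(y) \cap X(K) \ne \emptyset$; enlarging $S$ by these, I get that for $y \notin S$ the fibre $f^{-1}(y)$ is cyclic cubic if and only if $y \in \pi\bigl(Y_{\Delta(f)}(K)\bigr)$. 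Since $\pi$ has finite fibres and $S$ is finite, this gives the clean equivalence: $f$ has infinitely many cyclic cubic fibres if and only if $Y_{\Delta(f)}(K)$ is infinite.

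It remains to identify when $Y_{\Delta(f)}(K)$ is infinite. The curve $Y_{\Delta(f)}$ is smooth, projective and (by \Cref{discrim}) irreducible over $K$; a single $K$-point forces the algebraic closure of $K$ in its function field to equal $K$, so if $Y_{\Delta(f)}(K) \ne \emptyset$ then $Y_{\Delta(f)}$ is geometrically integral. For such a curve, $Y_{\Delta(f)}(K)$ is infinite if and only if its genus is at most $1$ and it has infinitely many rational points: genus $\ge 2$ is excluded by Faltings, a genus-$0$ curve with a rational point is $\mathbb{P}^1$, and a genus-$1$ curve with infinitely many rational points is an elliptic curve of positive Mordell--Weil rank. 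Combining with the previous paragraph, $f$ has infinitely many cyclic cubic fibres if and only if $Y_{\Delta(f)}$ is $\mathbb{P}^1$ or a positive rank elliptic curve.

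The substantive ingredients are thus Faltings (used twice: once to control $X(K)$ and once to bound the genus of $Y_{\Delta(f)}$) and the discriminant criterion. I expect the only genuinely delicate point to be the bookkeeping of the exceptional set $S$ together with the étale-algebra classification — in particular pinning down that a reduced fibre with square discriminant and no $K$-point really is a cyclic cubic field, and handling the degenerate cases where $\Delta$ is a square in $K(Y)$ (so $Y_{\Delta(f)} = Y$) or a non-square constant (so $Y_{\Delta(f)}$ is geometrically disconnected and has no $K$-points); everything else is routine.
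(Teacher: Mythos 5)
Your proposal is correct and follows essentially the same route as the paper: specialise the discriminant of the cubic extension $K(X)/K(Y)$ at rational points of $Y$, use the square-discriminant criterion to detect cyclic cubic fibres (discarding the finitely many fibres meeting $X(K)$ via Faltings), and then identify infinitude of $Y_{\Delta(f)}(K)$ with $Y_{\Delta(f)}$ being $\mathbb{P}^1$ or a positive rank elliptic curve. Your treatment is somewhat more explicit about the exceptional set and the \'etale-algebra classification than the paper's, but the substance is identical.
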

\begin{proof}
First, assume $Y_{\Delta(f)}$ is a $\mathbb{P}^1$ or a positive rank elliptic curve. Let $S$ be the image of $Y_{\Delta(f)}(K)$ in $Y(K)$, which is an infinite set by assumption. For a point $s \in S$, either $f^{-1}(s)$ contains a $K$-rational point, or it is irreducible. As $X$ has genus at least 2, it has finitely many $K$-rational points, and so the former can only occur for finitely many $s$.

Let $P \in X(\bar{K})$ be in the fibre above $s$. Let $x$ be a coordinate of $X$ realising the extension of function fields induced by the morphism, then $K(P) = K(x(P))$. The discriminant of the field extension is, up to squares in $K(Y)$, the discriminant of the minimal polynomial of $x$. This minimal polynomial specialises to the minimal polynomial of $x(P)$ at $s$. The discriminant of the minimal polynomial of $x$ is a square when specialised at $s$, and so the discriminant of the minimal polynomial of $x(P)$ is also a square in $K(Y)$ and hence $K$. This shows $K(x(P))$ is a cyclic extension of $K$. 

Now instead, assume $f$ has infinitely many cyclic cubic fibres. For infinitely many $s \in Y(K)$, there exists a $P \in f^{-1}(s)(\bar{K})$ such that $K(P)$ is a cyclic cubic extension of $K$. As before, take a coordinate, $x$, on $X$ which generates the function field extension given by the morphism. The discriminant of the minimal polynomial of $x(P)$ is a square as $K(P)$ is cyclic and generated by $x(P)$. This implies the discriminant of the minimal polynomial of $x$ is a square when specialised to $s$, and so the discriminant of the field extension is a square after specialising. In particular, $s$ is in the image of the rational points of $Y_{\Delta(f)}$. As this happens for infinitely many $s$, $Y_{\Delta(f)}(K)$ is infinite and $Y_{\Delta(f)}$ is either a $\mathbb{P}^1$ or a positive rank elliptic curve. 
\end{proof}

We can also give a weakening to the backward implication: that under certain conditions, having infinitely many cyclic cubic points implies the existence of a morphism as in the previous theorem. To state this, we first make a definition.

\begin{defn}
 Fix a degree 3 divisor, $D_0$. Then $W^{(3)}_X$ is defined to be the image of the Abel--Jacobi map (with basepoint $D_0$) from $\Sym^3 X$ to $J(X)$. This is well-defined up to translation, as a different choice of $D_0$ results in a translation by the difference of the two basepoints.
\end{defn}

We can now state this weakening.
\begin{theorem}
If $(X^3 / C_3)(K) \to W^{(3)}_X(K)$ is not finite-to-one, there exists a map $f : X \to \mathbb{P}^1$ where $\mathbb{P}^1_{\Delta(f)}$ is either a $\mathbb{P}^1$ or a positive rank elliptic curve.
\label{finmap}
\end{theorem}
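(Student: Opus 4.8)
The plan is to exploit the failure of finiteness of the map $(X^3/C_3)(K) \to W^{(3)}_X(K)$ to produce a positive-dimensional fibre, and then to extract a pencil from it. If the map is not finite-to-one, then (after passing to an infinite subset of $K$-points lying in a single fibre, using that the source and target are finite-type over $K$, so one fibre contains infinitely many $K$-points) there is a class $c \in W^{(3)}_X(K)$ whose preimage in $\Sym^3 X$ — equivalently the complete linear system $|D|$ for $D$ an effective degree $3$ divisor in that class — is positive-dimensional, with infinitely many of its $K$-points coming from cyclic cubic orbits rather than triples of rational points. Since $X$ has genus $\geq 2$, $\dim |D| \leq 1$ (a $g^2_3$ would force $X$ to be $\mathbb{P}^2$ or rational by Clifford/geometric Riemann–Roch), so $\dim |D| = 1$: we obtain a base-point-free $g^1_3$, i.e. a degree $3$ morphism $f : X \to \mathbb{P}^1$.

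Next I would identify the $K$-points of $X^3/C_3$ in this fibre with cyclic cubic fibres of $f$. A $K$-point of $X^3/C_3$ mapping to $c$ is a $C_3$-orbit $\{P, P^\sigma, P^{\sigma^2}\}$ with $P + P^\sigma + P^{\sigma^2} \in |D|$, hence equal to a fibre $f^{-1}(s)$ for a unique $s \in \mathbb{P}^1(K)$ (the fibre, being the unique effective divisor in its class meeting the moving part appropriately, is Galois-stable and its image point is rational); when $P \notin X(K)$ this fibre is irreducible with cyclic Galois group, i.e.\ a cyclic cubic fibre. Conversely distinct cyclic cubic orbits give distinct fibres hence distinct points $s$. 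So infinitely many cyclic cubic orbits map to $c$ forces $f$ to have infinitely many cyclic cubic fibres, and by \Cref{mapinf} this gives that $\mathbb{P}^1_{\Delta(f)}$ is a $\mathbb{P}^1$ or a positive rank elliptic curve, as required.

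The main obstacle I expect is the reduction step: justifying that non-finiteness of the map yields a single fibre with infinitely many $K$-points that are genuinely cyclic cubic (and not, say, a fibre all of whose $K$-points are sums of rational points — but there are only finitely many such since $X(K)$ is finite by Faltings, so infinitely many $K$-points in a fibre forces infinitely many cyclic cubic ones). One must also handle the subtlety that "not finite-to-one" a priori only says some fibre is infinite; care is needed because $X^3/C_3 \to W^{(3)}_X$ factors through $\Sym^3 X \to W^{(3)}_X$, and a positive-dimensional fibre of the latter is exactly a positive-dimensional complete linear system, so the Riemann–Roch bookkeeping ($\dim|D|=1$, base-point freeness after possibly removing a fixed rational point — which again can only happen finitely often) is where the real work lies. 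A secondary point to check carefully is that the $g^1_3$ we extract is actually defined over $K$, which follows because the fibre class $c$ is $K$-rational and $|D|$ is then a $K$-rational linear system.
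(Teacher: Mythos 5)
Your proposal is correct and follows essentially the same route as the paper: pass to a single positive-dimensional fibre of $\Sym^3 X \to W^{(3)}_X$, identify it with a complete linear system $|D|$, use genus $\geq 2$ to force $\dim|D| = 1$, rule out a base point (which, being $K$-rational, would force the orbits to consist of rational points and hence give infinitely many points of $X(K)$), and feed the resulting degree~3 pencil with infinitely many cyclic cubic fibres into \Cref{mapinf}. The only looseness is your phrase that the base-point case ``can only happen finitely often''; the correct statement, as you essentially indicate elsewhere, is that it cannot happen at all for this fibre since it would contradict Faltings.
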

\begin{proof}
As $(X^3 / C_3)(K) \to \Sym^3 X(K)$ is finite-to-one, the failing of $(X^3 / C_3)(K) \to W^{(3)}_X(K)$ to be finite-to-one implies $\Sym^3 X(K) \to W^{(3)}_X(K)$ is not finite-to-one. Let $D$ be a point of $\Sym^3 X(K)$ such that the fibre of $\Sym^3 X \to W^{(3)}_X$ containing $D$ has infinitely many rational points. As the fibre has infinitely many rational points, it is positive dimensional, and so $D$ moves as a divisor, and the points of the fibre are linearly equivalent divisors to $D$. Moreover, as the genus of $X$ is at least 2, the linear system attached to $D$ is 1-dimensional (if it was 2-dimensional, this would give a base-point-free linear system of degree 3 on $X$, making it birational to a plane cubic). If the linear system gives a degree 2 morphism to $\mathbb{P}^1$, then the family of divisors has a fixed point, which cannot happen for a family of rational points on $X^3 / C_3$ (if it did, the fixed point would need to be rational, and then the other 2 points should be rational, giving rise to infinitely many rational points on $X$). We can therefore assume that $X$ has a degree 3 map to $\mathbb{P}^1$, and the fibres of this map are the divisors arising in the contracted $\mathbb{P}^1$. By the previous theorem, this shows that the discriminant curve of the morphism is either a $\mathbb{P}^1$ or a positive rank elliptic curve.
\end{proof}

We now give criteria to determine whether a curve has such a morphism to $\mathbb{P}^1$.
\begin{proposition}
Suppose that there exists a morphism $f : X \to \mathbb{P}^1$ of degree 3 such that the discriminant curve is either genus 0 or 1, then there exists distinct points $P_1, ..., P_g \in X(\bar{K})$ such that $3P_i \sim 3P_j$ for all $i, j$.
\label{weier}
\end{proposition}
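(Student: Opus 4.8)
The plan is to count ramification via Riemann--Hurwitz and to read off the number of totally ramified fibres from the genus of the discriminant curve. A degree $3$ morphism $f : X \to \mathbb{P}^1$ has fibres of exactly three shapes: three distinct points, a double point together with a simple point, or a single triple point. Let $r$ be the number of points of $\mathbb{P}^1$ lying under a fibre of the second shape and $s$ the number lying under a fibre of the third. These contribute $1$ and $2$ respectively to the ramification divisor, so Riemann--Hurwitz for $f$ gives
\[
r + 2s \;=\; 2g - 2 - 3(2\cdot 0 - 2) \;=\; 2g + 4 .
\]

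The second, and really only substantial, step is to relate $r$ to the discriminant curve. Passing to the Galois closure $\widetilde{X} \to \mathbb{P}^1$, whose Galois group $G$ is a transitive subgroup of $S_3$ (so $C_3$ or $S_3$), the curve $\mathbb{P}^1_{\Delta(f)}$ of \Cref{discrim} is the quadratic resolvent, i.e.\ $\widetilde{X}/(G \cap A_3)$. Over a point of $\mathbb{P}^1$ the inertia is trivial for a fibre of the first shape, generated by a transposition for the second, and generated by a $3$-cycle for the third, and only a transposition is nontrivial in $G/(G \cap A_3)$; hence $\mathbb{P}^1_{\Delta(f)} \to \mathbb{P}^1$ is ramified precisely above the $r$ points carrying a fibre of the second shape. (One could instead compute with the discriminant polynomial of $f$ directly, but would then need to account for the square of an index relating the polynomial discriminant to the discriminant of the field extension; the resolvent viewpoint sidesteps this.) Therefore $r$ is even, and Riemann--Hurwitz for the double cover $\mathbb{P}^1_{\Delta(f)} \to \mathbb{P}^1$ shows its genus is $\max(0, r/2 - 1)$ when $r \geq 2$; the remaining case $r = 0$ — in which $f$ is cyclic or $\Delta$ is a non-square constant — also gives genus $0$. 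In every case, $\mathbb{P}^1_{\Delta(f)}$ has genus at most $1$ exactly when $r \leq 4$.

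Feeding $r \leq 4$ back into the displayed identity gives $2s = 2g + 4 - r \geq 2g$, so $f$ is totally ramified above at least $g$ points $t_1, \dots, t_s \in \mathbb{P}^1(\bar{K})$. For each such $t_i$ let $P_i \in X(\bar{K})$ be the unique preimage, so that $f^{*}(t_i) = 3P_i$ as divisors. Since any two points of $\mathbb{P}^1$ are linearly equivalent, $f^{*}(t_i) \sim f^{*}(t_j)$, i.e.\ $3P_i \sim 3P_j$ for all $i$ and $j$; and the $P_i$ are pairwise distinct because their images are. Retaining any $g$ of them proves the proposition. The main obstacle is exactly the bookkeeping of the middle step — matching the ramification of $\mathbb{P}^1_{\Delta(f)}$ over $\mathbb{P}^1$ with the transposition-type fibres of $f$, and checking the possibly geometrically reducible degenerate cases — after which the statement falls out of Riemann--Hurwitz.
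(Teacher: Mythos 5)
Your proof is correct and follows essentially the same route as the paper's: use Riemann--Hurwitz to get total ramification $2g+4$, observe that the discriminant curve is ramified exactly over the simple (transposition-type) branch points so that the genus bound forces at most $4$ of these, and conclude there are at least $g$ totally ramified points whose fibres $3P_i$ are pullbacks of points of $\mathbb{P}^1$ and hence all linearly equivalent. The extra care you take with the resolvent description of $\mathbb{P}^1_{\Delta(f)}$ and the geometrically reducible case is a slightly more explicit justification of a step the paper states briefly, but it is not a different argument.
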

\begin{proof}
As $f$ is a degree 3 morphism from $X$ to $\mathbb{P}^1$, it has $2g + 4$ ramification points (counting with multiplicity) over the algebraic closure of $K$. The discriminant vanishes to order one at the simple branch points, and so the discriminant curve is ramified over this point. To meet the genus requirement on the discriminant curve, there can be at most 4 ramification points of $f$ that have index 2. This leaves at least $\frac{2g}{2} = g$ ramification points, $P_1, ..., P_g \in X(\bar{K})$ of $f$ with index 3. As $f$ is degree 3 and each $P_i$ has ramification index 3, $3P_i$ is a fibre of $f$ for each $i$. The linear equivalence statement follows, using suitable rational functions of $f$.
\end{proof}
For curves of genus at least 3, for a point to satisfy this linear equivalence it must be a Weierstrass point. 

For curves of genus 2, this proposition is insufficient, as was pointed out to the author by Borys Kadets, since any 3-torsion point can be written as $P - Q$, for some points $P, Q$ on the curve and $3P \sim 3Q$. In particular, this shows that every genus 2 curve, over a suitable field extension, has infinitely many cyclic cubic points. In this case, the following refinement is necessary.
\begin{proposition}
Let $X$ be a genus 2 curve, and suppose there exists a degree 3 morphism $f : X \to \mathbb{P}^1$, such that the discriminant curve is a $\mathbb{P}^1$ or an elliptic curve. Then either $X$ has an automorphism of order 3, or the Jacobian of $X$ has a 3-isogeny. 
\label{gen2weier}
\end{proposition}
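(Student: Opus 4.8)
The plan is to split according to whether $f : X \to \mathbb{P}^1$ is a Galois cover. If it is (the deck group is then necessarily $C_3$), that deck group acts on $X$ by $K$-rational automorphisms, so $X$ has an automorphism of order $3$ and we are done. So I would assume $f$ is not Galois; then the Galois closure of $K(X)/K(\mathbb{P}^1)$ has group $S_3$ and $\mathbb{P}^1_{\Delta(f)}$ is the quadratic resolvent of this extension, which by hypothesis is geometrically integral of genus at most $1$.

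Next I would run the ramification bookkeeping from the proof of \Cref{weier}. By Riemann--Hurwitz $f$ has $2g + 4 = 8$ ramification points counted with multiplicity; over $\bar K$ each fibre has type $(1,1,1)$, $(2,1)$ or $(3)$, contributing $0$, $1$ or $2$ to the ramification divisor, and $\Delta$ vanishes to order $0$, $1$ or $2$ there respectively. Hence $\mathbb{P}^1_{\Delta(f)} \to \mathbb{P}^1$ is the double cover branched exactly over the $(2,1)$-fibres, whose number $r$ is even, is at least $2$ (geometric integrality of $\mathbb{P}^1_{\Delta(f)}$) and at most $4$ (its genus). So $r \in \{2,4\}$ and there are $s := (8-r)/2 \in \{2,3\}$ totally ramified points $P_1, \dots, P_s \in X(\bar K)$; these are distinct, and $3P_i \sim 3P_j$ for all $i,j$ since each $3P_i$ is a fibre of $f$.

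The set $\{P_1, \dots, P_s\}$ is $\mathrm{Gal}(\bar K/K)$-stable, being the index-$3$ ramification locus of $f$, so the classes $[P_i - P_j] \in J_X[3]$ generate a nonzero $\mathrm{Gal}$-stable subgroup $G$. The homomorphism $\{(a_i) \in \mathbb{Z}^s : \sum a_i = 0\} \to J_X$, $(a_i) \mapsto [\sum a_i P_i]$, kills each $3(P_i - P_j)$, so factors as an $S_s$-equivariant surjection from the standard module $V_s := \{(a_i) \in \mathbb{F}_3^s : \sum a_i = 0\}$ onto $G$, with $\mathrm{Gal}$ acting through its image $H \le S_s$ in the permutation action on $\{P_1, \dots, P_s\}$. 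The heart of the argument is then to exhibit a $K$-rational subgroup of $G$ of order $3$: if $\dim_{\mathbb{F}_3} G = 1$ then $G$ itself is one; if $\dim_{\mathbb{F}_3} G = 2$ — which forces $s = 3$ and $G \cong V_3$ — I would invoke the identity $t^2 + t + 1 = (t-1)^2$ in $\mathbb{F}_3[t]$, equivalently that $(1,1,1) \in V_3$ is fixed by all of $S_3$, so its image $[P_1 + P_2 - 2P_3]$ lies in $J_X(K)[3]$ (it is $3$-torsion and is fixed by every permutation of the $P_i$ once one uses $3(P_i-P_j)\sim 0$) and generates a $K$-rational subgroup of order $3$. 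In either case $J_X$ has a $K$-rational subgroup of order $3$, hence a $3$-isogeny, which proves the proposition.

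I expect the case $s = 3$ to be the main obstacle: there the discriminant curve is a $\mathbb{P}^1$ and the $S_3$-closure is a genuine cyclic cubic cover of that $\mathbb{P}^1$, yet $X$ itself need carry no order-$3$ automorphism, so the $3$-isogeny must be produced by hand; a priori the three classes $[P_i - P_j]$ could span a $9$-element group with no $\mathrm{Gal}$-stable line, and it is exactly the characteristic-$3$ coincidence above — forcing the mod-$3$ standard representation of $S_3$, and of each of its subgroups, to have nonzero invariants — that rules this out. One should also check carefully that the genus bound really forces $s \ge 2$ and that distinctness of the $P_i$ keeps the classes $[P_i - P_j]$ nonzero.
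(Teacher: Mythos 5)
Your proof is correct and follows essentially the same route as the paper: a case analysis on the number of totally ramified points ($2$, $3$, or $4$, the last being the Galois case yielding the order-$3$ automorphism), producing otherwise a Galois-stable order-$3$ subgroup of $J_X$ generated by the classes $[P_i - P_j]$, with $[P_1 + P_2 - 2P_3]$ as the invariant element when there are three such points. Your $\mathbb{F}_3[S_3]$-module packaging is a slightly more systematic version of the paper's direct computation and additionally handles the degenerate sub-case where $[P_1 + P_2 - 2P_3]$ might vanish in $J_X$ (falling back to the one-dimensional quotient), a point the paper's argument passes over.
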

\begin{proof}
As $f$ is a degree 3 morphism $X$ to $\mathbb{P}^1$, it has 8 ramification points over $\bar{K}$, counting multiplicity. For the discriminant curve to have the necessary genus, at least 2 of these ramification points have ramification index 3.

If it has exactly 2, $P$ and $Q$, then $3P \sim 3Q$ so $P - Q$ is a 3-torsion point. As $P$ and $Q$ are the only 2 ramification points with index 3 for $f$, they are either rational, or conjugate to each other. In either case, the subgroup of the Jacobian generated by $P - Q$ is defined over $K$, giving rise to a 3-isogeny. 

If there are exactly 3 ramification points with index 3, say $P_1, P_2$ and $P_3$, then $P_1 - P_3$ and $P_2 - P_3$ are 3-torsion points, and so is $P_1 + P_2 - 2P_3$. As the $P_i$ are closed under Galois conjugation, this point is rational as any permutation of $1, 2, 3$ fixes this. For example, swapping 2 and 3 gives the divisor $P_1 + P_3 - 2P_2 = (P_1 - P_3) + (2P_3 - 2P_2)$, but $2P_3 - 2P_2 \sim P_2 - P_3$ as $P_3 - P_2$ is a 3-torsion point. This shows $P_1 + P_3 - 2P_2 \sim P_1 + P_2 - 2P_3$. The subgroup generated by this point is $K$-rational, and so gives a 3-isogeny.

Finally, if there are 4 such ramification points, this exhausts the ramification. The discriminant curve is then the trivial extension and the discriminant of $f$ is a perfect square, and so $f$ has Galois group $C_3$, which shows $X$ has an automorphism of order 3.
\end{proof}
Since the 3-torsion of genus 2 curves can be effectively computed \cite{DokDor}, as can their automorphism groups, these conditions can be verified.

We close the section with a classical result of a somewhat different flavour. This will be used to exclude the existence of certain morphisms on the curve.
\begin{theorem}[Castelnuovo--Severi]
 Let $X$ be a curve. Suppose $X$ has maps $f_i : X \to Y_i$, $i = 1, 2$ of degree $d_i$ and the genus of $Y_i$ is $g_i$. Then either the genus of $X$ is at most $d_1 g_1 + d_2 g_2 + (d_1 - 1)(d_2 - 1)$, or the $f_i$ factor through a common non-trivial map. 
 \label{csineq}
\end{theorem}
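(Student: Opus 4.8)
The plan is to place $X$ inside the surface $S = Y_1 \times Y_2$ and bound its arithmetic genus by intersection theory on $S$. Consider the product morphism $f = (f_1, f_2) : X \to S$ and let $X' \subseteq S$ be its reduced image, so that $f$ factors as $X \xrightarrow{\pi} X' \hookrightarrow S$ with $\pi$ surjective and $f_i = (p_i|_{X'}) \circ \pi$, where $p_i : S \to Y_i$ is the $i$-th projection. If $\deg \pi > 1$, then $\pi : X \to X'$ is itself a nontrivial map through which both $f_i$ factor, and we are in the second alternative of the theorem; so we may assume $\pi$ is birational. Since $X$ is smooth, this makes $X$ the normalisation of $X'$, whence $g(X) = p_g(X') \le p_a(X')$, and it suffices to bound $p_a(X')$.

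Next I would record the intersection numbers on the smooth projective surface $S$. Writing $F_1 = Y_1 \times \{\mathrm{pt}\}$ and $F_2 = \{\mathrm{pt}\} \times Y_2$, one has $F_1^2 = F_2^2 = 0$ and $F_1 \cdot F_2 = 1$; since $p_i|_{X'}$ has degree $d_i$, we get $X' \cdot F_1 = d_2$ and $X' \cdot F_2 = d_1$; and since $K_S = p_1^* K_{Y_1} + p_2^* K_{Y_2}$, we get $X' \cdot K_S = d_1(2g_1 - 2) + d_2(2g_2 - 2)$. To bound $X'^2$, set $D = X' - d_1 F_1 - d_2 F_2$ in $N^1(S)_{\mathbb{R}}$. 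A direct computation gives $D \cdot F_1 = D \cdot F_2 = 0$, so $D$ lies in the orthogonal complement of the hyperbolic plane $\langle F_1, F_2 \rangle$; by the Hodge index theorem this complement is negative definite, so $0 \ge D^2 = X'^2 - 2 d_1 d_2$, i.e. $X'^2 \le 2 d_1 d_2$.

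Finally, adjunction on $S$ yields $2 p_a(X') - 2 = X'^2 + X' \cdot K_S \le 2 d_1 d_2 + d_1(2g_1 - 2) + d_2(2g_2 - 2)$, and rearranging (using $(d_1-1)(d_2-1) = d_1 d_2 - d_1 - d_2 + 1$) gives $p_a(X') \le d_1 g_1 + d_2 g_2 + (d_1 - 1)(d_2 - 1)$. Combined with $g(X) \le p_a(X')$, this is exactly the asserted inequality.

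I expect the only genuinely delicate point to be the reduction step: checking that non-birationality of $\pi$ produces a common nontrivial factor of $f_1$ and $f_2$ in the sense the statement intends, and that in the birational case $X$ really is the normalisation of $X'$ (so that its geometric genus is $g(X)$, even when $X'$ is singular). The Hodge index input and the intersection-number bookkeeping are routine once $X'$ has been set up, and the adjunction formula holds for any effective curve on a smooth surface irrespective of its singularities.
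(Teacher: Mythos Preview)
The paper does not prove this theorem at all: it is quoted as a classical result (``We close the section with a classical result of a somewhat different flavour'') and used as a black box, with no argument supplied. There is therefore nothing in the paper to compare your proof against.

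Your argument is the standard one and is correct. Embedding $X$ into $Y_1\times Y_2$ via $(f_1,f_2)$, the dichotomy ``$\pi$ birational'' versus ``$\deg\pi>1$'' is exactly the dichotomy in the statement, and in the birational case the chain $g(X)=p_g(X')\le p_a(X')$ holds because a smooth curve is the normalisation of any curve birational to it. The intersection computations are right: with $F_1=Y_1\times\{\mathrm{pt}\}$ a fibre of $p_2$ and $F_2=\{\mathrm{pt}\}\times Y_2$ a fibre of $p_1$, one has $X'\cdot F_1=d_2$, $X'\cdot F_2=d_1$, $K_S\equiv (2g_1-2)F_2+(2g_2-2)F_1$, and the Hodge index step with $D=X'-d_1F_1-d_2F_2$ gives $X'^2\le 2d_1d_2$ as you say. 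Adjunction then yields the bound. The caveats you flag (that ``common nontrivial factor'' should be read as ``common nontrivial quotient map $X\to X'$'', and that adjunction applies to possibly singular $X'$ on the smooth surface $S$) are the right ones and are unproblematic.
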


\section{A Theorem of Abramovich and Harris}
The previous section describes the (cyclic) cubic points of $X$ that arise from maps to $\mathbb{P}^1$. The remainder are controlled by rational points on $W^{(3)}_X$. To help understand this, we have the following theorem of Abramovich and Harris \cite{AbrHar}. 

\begin{theorem}
If $g(X) \geq 4$, and $W^{(3)}_X$ contains an abelian variety, $A$, then the abelian variety is an elliptic curve. If, further, $g(X) \geq 5$, then $X$ admits a map to $A$ of degree at most 3. Moreover, the fibres of this map are the divisors appearing in $A$, less fixed points.
\label{abrhar}
\end{theorem}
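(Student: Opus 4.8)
The plan is to pull the abelian variety $A\subseteq W^{(3)}_X$ back to $\Sym^3 X$ and show the preimage is forced to be very small. After a translation I may assume $A$ is an honest abelian subvariety of $J_X$ (so $0\in A$), and after re-choosing the degree $3$ base divisor $D_0$ that $A\subseteq W^{(3)}_X$ with this basepoint; set $Z:=\{D\in\Sym^3 X:[D]-[D_0]\in A\}$, so that Abel--Jacobi restricts to a surjection $Z\to A$. The key preliminary is that $Z$ cannot lie inside the locus $W^1_3$ of effective degree $3$ divisors moving in a pencil: every $g^1_3$ consists of linearly equivalent divisors, so $W^1_3$ maps to a set of dimension at most one in $J_X$ (and, by \Cref{csineq}, to a finite set once $g\ge 5$, since then a $g^1_3$ is unique), which cannot dominate the positive-dimensional $A$. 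Since $g\ge 4$ forces a general degree $3$ divisor to be nonspecial, $\Sym^3 X\to W^{(3)}_X$ is birational off $W^1_3$; hence $\dim Z=\dim A$ and $Z$ is birational to the abelian variety $A$.

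Next I would rule out $\dim A\in\{2,3\}$. If $\dim A=3$ then $Z=\Sym^3 X$, so $\Sym^3 X$ is birational to an abelian threefold; but its Albanese variety is $J_X$, of dimension $g\ge 4>3$, and the Albanese is a birational invariant --- contradiction. If $\dim A=2$, I would use tangent spaces. For a general $D=P_1+P_2+P_3\in Z$ one has $h^0(D)=1$, so $T_{[D]}W^{(3)}_X=H^0(K_X-D)^{\perp}$ is a $3$-plane in $H^0(K_X)^{*}$, and since $[D]$ lies on the translate of the abelian surface $A$ this $3$-plane must contain the \emph{fixed} $2$-plane $T_0A$. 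Reading this off the canonical curve $X\subseteq\mathbb P(H^0(K_X)^{*})=\mathbb P^{g-1}$ (or, in the hyperelliptic case, the rational normal curve underlying the canonical map), it says the plane $\langle P_1,P_2,P_3\rangle$ contains the fixed line $\lambda:=\mathbb P(T_0A)$; equivalently, the projection $\pi_\lambda\colon X\to\mathbb P^{g-2}$ from $\lambda$ collapses $P_1,P_2,P_3$ to a single point. Thus every general member of $Z$ is a degree $3$ subdivisor of a fibre of $\pi_\lambda$, so $D\mapsto\pi_\lambda(D)$ maps $Z$ with finite fibres into the curve $\pi_\lambda(X)$ --- contradicting $\dim Z=2$. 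This establishes the first assertion.

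Now assume $g\ge 5$, so $\dim A=1$ and $Z$ is a curve birational to the elliptic curve $A$. The same tangent space computation, with $T_0A$ now a line, shows that the general member $D\in Z$ has the fixed \emph{point} $\lambda=\mathbb P(T_0A)$ in its span. Let $F\ge 0$ be the common fixed part of the members of $Z$; one checks $\deg F\le 1$ (if $\deg F=2$ the mobile parts would form a nonconstant family of points, i.e.\ a nonconstant morphism from the elliptic curve $A$ to the genus $\ge 2$ curve $X$, impossible; and $\deg F=3$ is excluded by $\dim Z=1$). Writing $D=F+D'$, the family $Z'=\{D'\}$ is base-point-free of degree $d'=3-\deg F\in\{2,3\}$, covers $X$, and is parametrised by the smooth model of $Z$, which is isomorphic to $A$. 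Provided a general point of $X$ lies on a \emph{unique} member of $Z'$, the rule ``$P\mapsto$ the member of $Z'$ through $P$'' is a morphism $X\to A$ of degree $d'\le 3$ whose fibres are exactly the members of $Z'$, i.e.\ the divisors appearing in $A$ with the fixed points $F$ removed, as claimed.

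The step I expect to be the crux --- and the point at which the argument of \cite{AbrHar} needs the most care --- is precisely this uniqueness, equivalently that the universal divisor over $Z'$ maps birationally to $X$ (equivalently, that the members of $Z'$ are honest, not merely sub-, fibres of the degree $d'$ map coming from $\pi_\lambda$). If a general $P$ lay on two distinct members $F+D_1',F+D_2'$, writing $D_i'=P+E_i$ yields distinct effective $E_1,E_2$ of degree $d'-1\le 2$ with $[E_1]-[E_2]\in A$; letting $P$ range over $X$ produces a positive-dimensional correspondence on $X$ compatible with $A$. I would exclude this by showing it makes $\pi_\lambda(X)$ carry a positive-dimensional family of $d'$-secant lines with a general point on at least two of them, which for a curve in $\mathbb P^{g-2}$ with $g-2\ge 3$ forces $\pi_\lambda(X)$, hence $X$, to be rational, elliptic, hyperelliptic, or trigonal in a manner incompatible with $Z'$ being parametrised by an \emph{elliptic} curve and with $g\ge 5$ (via \Cref{csineq} no common factor of the resulting covers can be rational or elliptic, and a $g^1_d$ on $X$ would be parametrised by a rational, not elliptic, curve); more intrinsically, tracking the points of a general fibre of the would-be degree $>d'$ map modulo $A$ shows $X\to J_X/A$ would factor through a curve of genus $\le 1$, forcing $\dim(J_X/A)=g-1\le 1$, a contradiction. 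Throughout, the technical heart is checking that the translates of $A$ and the locus $\mathbb P(T_0A)$ sit in sufficiently general position relative to the canonical curve, which is exactly where the hypotheses $g\ge 4$ and $g\ge 5$ are genuinely used.
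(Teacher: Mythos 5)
Your architecture is sound and the first assertion ($g\ge 4$ implies $\dim A=1$) is handled by arguments genuinely different from the paper's: the paper excludes large $\dim A$ via the bounds $\dim A\le\dim|2D|\le 2$ together with a Kummer-surface argument for abelian surfaces, whereas you use the Albanese variety for $\dim A=3$ and a tangent-space/projection argument for $\dim A=2$. Both of your exclusions are essentially correct (modulo small slips: a general effective degree-$3$ divisor on a curve of genus $\ge 4$ is \emph{special} --- what you actually need and use is $h^0(D)=1$; projection from a line in $\mathbb{P}^{g-1}$ lands in $\mathbb{P}^{g-3}$; and the hyperelliptic case of the geometric Riemann--Roch step deserves a sentence). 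These are attractive, classical alternatives to the paper's route.

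However, there is a genuine gap at exactly the point you flag as the crux. For $g\ge 5$ you reduce correctly to: the mobile parts $D'$ of the divisors parametrised by $A$ cover $X$, and you need a general point of $X$ to lie on a \emph{unique} member of the family in order for $P\mapsto(\text{member through }P)$ to be a morphism $X\to A$ of degree $\le 3$ with the stated fibres. This is precisely the statement the paper isolates (the second hypothesis of \Cref{fibre_lemma}) and the one whose proof occupies most of its Section 3: one must first show $|2D|$ is birational onto its image, establish \Cref{good_intersection} ($D'\cap D''=\{x\}$ for two members through $x$), and then run the $\dim|3D|$ computation (pinning $\dim|3D|=4$ between the lower bound from $h^0(D_1+D_2)$, $h^0(D_1+D_3)\ge 3$ and the Clifford upper bound, then deriving a contradiction by projecting $|2D+D'|$ away from the span of $D'$). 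Your two proposed substitutes are not proofs: the ``one-parameter family of trisecant lines with a general point on two of them'' claim is delicate for curves in $\mathbb{P}^{g-2}$ and is not reduced to anything you have established, and the ``more intrinsic'' argument via $X\to J_X/A$ refers to a ``would-be degree $>d'$ map'' that does not exist precisely when uniqueness fails, so the contradiction is not actually derived. Since the entire reason the paper reproves this theorem is that this step is the known gap in \cite{AbrHar}, leaving it at the level of a sketch means the proposal does not constitute a proof of the second and third assertions of the theorem.
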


The paper this result comes from is known to have some results for which the proof is incomplete (see \cite{DebFah} and \cite{KadVog} for further details). In \cite{KadVog}, they prove the existence of a map to an elliptic curve under the assumptions of this theorem, but they do not show the second half, and their work assumes that the gonality of $X$ is more than 3. We therefore give a proof here, based on the methods of \cite{KadVog}.

Curves of genus at least 4 have at most finitely many morphisms of degree at most 3 to $\mathbb{P}^1$ (up to automorphisms of $\mathbb{P}^1$), and so the Abel--Jacobi map is birational onto its image. In particular, $A$ may be replaced by its strict transform inside $\Sym^3 X$. If every element of $A(K) \subset \Sym^3 X(K)$ contains a fixed point, then we can embed $A$ in $\Sym^2_X$ instead.

We first recall a few lemmas, starting with \cite[Lemma 1]{AbrHar} (or \cite[Proposition 3.3]{KadVog}).
\begin{lemma}
 If $D \in A(K) \subset \Sym^d_X(K)$, for some $d$, then $\dim |2D| \geq \dim A$.
\end{lemma}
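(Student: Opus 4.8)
The plan is to exhibit, for a fixed $D \in A(K)$, an algebraic family of effective divisors linearly equivalent to $2D$ parametrised by $A$ itself, and then to bound the dimension of that family from below.

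First I would fix notation: regard $A$ as the (strict transform) abelian subvariety sitting inside $\Sym^d_X$, and choose the group law on $A$ so that $D$ is the origin, writing $\iota$ for the inversion automorphism (which then fixes $D$). Let $u : \Sym^d_X \to J_X$ be the Abel--Jacobi map. By rigidity, the restriction $u|_A$ is the sum of a constant and a group homomorphism from $A$ onto an abelian subvariety of $J_X$, so for every $a \in A$ one computes $u(a) + u(\iota(a)) = 2 u(D) = u(2D)$ in $J_X$. Since two effective divisors of the same degree lie in a common fibre of the Abel--Jacobi map precisely when they are linearly equivalent, the effective divisor $a + \iota(a)$ of degree $2d$ belongs to the complete linear system $|2D|$ for every $a \in A$.

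This defines a morphism $\Phi : A \to |2D|$, $a \mapsto a + \iota(a)$ (the composite of $(\mathrm{id}, \iota) : A \to A \times A$ with the addition map $\Sym^d_X \times \Sym^d_X \to \Sym^{2d}_X$), and it suffices to show $\dim \overline{\Phi(A)} = \dim A$, since then $\dim |2D| \geq \dim \overline{\Phi(A)} = \dim A$. The crux is that $\Phi$ has finite fibres: if $\Phi(a) = \Phi(a')$, then $a$ is an effective sub-divisor of the fixed effective degree $2d$ divisor $\Phi(a)$, of which there are only finitely many, and the inclusion $A \hookrightarrow \Sym^d_X$ is injective, so $a$ ranges over a finite subset of $A$. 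Hence $\Phi$ is generically finite onto its image and $\dim \overline{\Phi(A)} = \dim A$.

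The only delicate point is the bookkeeping in the construction: that $A$ genuinely sits inside $\Sym^d_X$ (so that $a + \iota(a)$ makes sense as a divisor), which is exactly what the earlier reduction to the strict transform supplies, and that the linear equivalence class of $a + \iota(a)$ does not depend on $a$, which is the computation in $J_X$ above. I do not expect any further obstacle; the finiteness of the fibres of $\Phi$ is the one place a subtlety could hide, and the sub-divisor count disposes of it. When $\dim A = 0$ the statement is vacuous.
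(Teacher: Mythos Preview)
Your argument is correct and is essentially the standard one: the paper does not prove this lemma itself but cites \cite[Lemma 1]{AbrHar} and \cite[Proposition 3.3]{KadVog}, and both references use exactly the map $\Phi : A \to |2D|$, $a \mapsto a + \iota(a)$ that you construct, together with the finite-subdivisor count to see that $\Phi$ is generically finite. Indeed, the paper itself invokes the very same map a few paragraphs later (``There is a map from $A$ to $\mathbb{P}^2 = |2D|$ given by $D' \mapsto D' + D''$, where $D''$ is the element of $A$ equivalent to $2D - D'$'') when ruling out abelian surfaces, so your reconstruction lines up with both the cited sources and the paper's own usage.
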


When $A$ is an elliptic curve, we can improve this estimate as in \cite[Lemma 2]{AbrHar} and \cite[Lemma 2.2 and Proposition 3.3]{KadVog}.
\begin{proposition}
 Let $D$ be in $A(K) \subset \Sym^d_X(K)$ for some $d$. If $\dim A = 1$ and $\dim |2D| = 1$, then there exists a morphism of degree $d$ from $X$ to $A$, such that each point of $A(K)$ is contained in a fibre. If, instead, a general point of $X(\bar{K})$ appears exactly once in the support of elements of $A(\bar{K})$, the same result is true.
 \label{fibre_lemma}
\end{proposition}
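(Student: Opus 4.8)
The plan is to build the morphism $f$ directly out of the family of divisors carried by $A$, and to use either hypothesis to control the geometry of that family. Write $D_a$ for the degree $d$ divisor corresponding to $a \in A$, so that $a \mapsto D_a$ is a $K$-morphism $A \to \Sym^d_X$, and let $\mathcal D \subseteq X \times A$ be the associated universal divisor, with its two projections $q : \mathcal D \to A$ (finite flat of degree $d$, with $q^{-1}(a) = D_a$) and $p : \mathcal D \to X$. If some point of $X$ lies on every $D_a$, it spans a vertical component of $\mathcal D$; the reduced union of such points is Galois-stable, hence a $K$-rational effective divisor $F_0$, and subtracting it from every $D_a$ embeds $A$ into a lower symmetric power of $X$ without disturbing either hypothesis — for the hypothesis $\dim |2D| = 1$ one keeps the inequality $\dim |2D'| \geq \dim A$ of the preceding lemma an equality. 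So I may assume $F_0 = 0$; then every component of $\mathcal D$ dominates $X$, and $p$ is a finite dominant morphism of curves.

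The crux is to show $\deg p = 1$. Under the second hypothesis this is immediate: it says precisely that a general point of $X(\bar K)$ lies on a unique $D_a$, with multiplicity one. Under the hypothesis $\dim|2D| = 1$ I would argue as follows. Since the Abel--Jacobi map is birational onto its image, $a \mapsto \mathcal O(D_a)$ is injective with image a coset of an elliptic subvariety of $J_X$; hence for each $a$ there is a unique $a^{*} \in A$ with $\mathcal O(D_a) \otimes \mathcal O(D_{a^{*}}) \cong \mathcal O(2D)$, and $a \mapsto a^{*}$ is an involution, so in particular $D_a + D_{a^{*}} \sim 2D$ for all $a$. The assignment $a \mapsto D_a + D_{a^{*}}$ is a non-constant morphism from $A$ to $|2D| \cong \mathbb P^1$ (non-constant because a fixed divisor of degree $2d$ has only finitely many sub-divisors of degree $d$), hence surjective, so $|2D|$ \emph{is} the pencil $\{D_a + D_{a^{*}}\}_{a \in A}$. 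Suppose now $\deg p \geq 2$. Since $\mathcal D$ has no vertical component, a general $x \in X$ then lies on $D_{c_1}$ and $D_{c_2}$ for two distinct $c_1, c_2 \in A$, the only alternative being that $x$ lies on a single $D_c$ with multiplicity $\geq 2$, which would place $2x$ beneath a member of the pencil $|2D|$ and so force its moving part to ramify at a general point of $X$ — absurd. The two members $D_{c_1} + D_{c_1^{*}}$ and $D_{c_2} + D_{c_2^{*}}$ of the pencil are moreover distinct: if they agreed then $c_2 = c_1^{*}$, whence $D_{c_1} + D_{c_2} \sim 2D$ and again $2x$ lies beneath a member of $|2D|$, the same contradiction (and when $\deg p \geq 3$ one simply takes a third index in place of $c_2$). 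So a general point of $X$ would lie on two distinct members of a pencil, which is impossible. Hence $\deg p = 1$.

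With $\deg p = 1$ and no vertical components, $\mathcal D$ is integral and $p : \mathcal D \to X$ is a finite birational morphism onto the smooth curve $X$, hence an isomorphism; then $f := q \circ p^{-1} : X \to A$ is a $K$-morphism with $\deg f = \deg q = d$ and $f^{*}(a) = D_a$ for all $a \in A$. In particular every point of $A(K) \subseteq \Sym^d_X(K)$ is a fibre of $f$ — or, if the reduction step removed a nonzero $F_0$, then $D_a = F_0 + f^{*}(a)$ with $\deg f = d - \deg F_0$, so each such $D_a$ contains the corresponding fibre. I expect the main obstacle to be the step $\deg p = 1$ under the hypothesis $\dim|2D| = 1$: it rests on the combinatorics of how a general point of $X$ is distributed among the $D_a$ and their reflections $D_{a^{*}}$, and on the point that $\dim|2D| = 1$ may be taken to hold for whichever base divisor is used to form the involution (take it to be $D$ itself, or invoke upper semicontinuity of $h^0$ over the irreducible curve $A$). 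The remaining steps — the reduction to $F_0 = 0$, and turning $\deg p = 1$ into a genuine morphism — are formal.
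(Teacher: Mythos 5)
The paper does not prove this proposition itself: it is imported from \cite[Lemma 2]{AbrHar} and \cite[Lemma 2.2, Proposition 3.3]{KadVog}, so your argument has to stand on its own. Your framework is reasonable --- the universal divisor $\mathcal D \subset X \times A$, the removal of the fixed part, the reduction to $\deg p = 1$, and the final step $f = q \circ p^{-1}$ are all fine, and under the second hypothesis $\deg p = 1$ is indeed essentially the hypothesis itself. The problem is the first case, which is the one the paper actually needs when $A \subset \Sym^2 X$.

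Your proof that $\deg p = 1$ when $\dim|2D| = 1$ rests entirely on the assertion that $D_{c_1} + D_{c_1^*} = D_{c_2} + D_{c_2^*}$ forces $c_2 = c_1^*$, i.e.\ that the morphism $\psi : A \to |2D| \cong \mathbb{P}^1$, $a \mapsto D_a + D_{a^*}$, identifies only the pairs $\{a, a^*\}$ and hence has degree $2$. You give no justification, and under your reductio hypothesis the claim is exactly what fails. Indeed, for a general $x$ outside the base locus, write $A_x$ for the set of $a$ with $x \in D_a$ and $t$ for the unique member of the pencil through $x$; every $a \in A_x$ satisfies $\psi(a) = t$, and conversely any $a$ with $\psi(a) = t$ has $x \in D_a$ or $x \in D_{a^*}$ but not both (since $x$ has multiplicity one in that member, by your own ramification argument). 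Hence $\psi^{-1}(t) = A_x \sqcup \iota(A_x)$ and $\deg \psi = 2\deg p$. So if $\deg p \geq 2$, the fibre over $t$ contains $c_1, c_2 \in A_x$ with $c_2 \neq c_1$ and $c_2 \neq c_1^*$ (because $c_1^* \notin A_x$): the configuration $E_t = D_{c_1} + D_{c_1^*} = D_{c_2} + D_{c_2^*}$ with all four divisors distinct and $x$ simple in $E_t$ is perfectly consistent, and no contradiction is reached. Your key implication is equivalent to the statement $\deg p = 1$ being proved, so the argument is circular at precisely the point you flag as the main obstacle. Excluding this configuration is the real content of the lemma in \cite{AbrHar} and \cite{KadVog}, and it requires a further input (the injectivity of Abel--Jacobi on $A$ interacting with how the $D_{c_i}$ intersect, in the spirit of \Cref{good_intersection}) that your write-up does not supply.
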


On the other hand, there is an upper bound on the size of $\dim |2D|$ as follows.
\begin{lemma}
 For a general $D \in A(\bar{K}) \subset \Sym^d_X(\bar{K})$, $\dim |2D| \leq d - 1$ or the genus of $X$ is at most $d$.
\end{lemma}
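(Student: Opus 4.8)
The plan is to argue by contradiction: suppose that for a general $D \in A(\bar K) \subset \Sym^d_X(\bar K)$ we have $\dim |2D| \geq d$, and deduce that the genus of $X$ is at most $d$. The idea is to exploit the large linear system $|2D|$ together with the fact that $D$ itself varies in the positive-dimensional family $A$. Since $D$ is general in $A$, translating by points of $A$ moves $D$ through a $(\dim A)$-dimensional family of effective divisors, all of degree $d$; combined with $\dim |2D| \geq d$, this should force $X$ to carry an unexpectedly large supply of low-degree divisors, which by Clifford's theorem (or the Clifford-type bounds implicit in the previous two lemmas) can only happen on a curve of small genus.

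Concretely, I would first reduce to the case where $2D$ is special (i.e. $h^1(2D) > 0$): if $2D$ is nonspecial then by Riemann--Roch $\dim |2D| = 2d - g$, so $\dim|2D| \geq d$ immediately gives $g \leq d$, and we are done. So assume $2D$ is special. Then Clifford's theorem gives $\dim |2D| \leq d$, with equality only in the well-understood boundary cases: $2D \sim 0$, $2D \sim K_X$, or $X$ hyperelliptic with $2D$ a multiple of the $g^1_2$. The strict inequality $\dim|2D| \leq d - 1$ we want therefore fails only in one of these equality cases. I would then rule each of them out — or rather, show that each forces $g \leq d$ — using that $D$ is a \emph{general} member of the positive-dimensional family $A$: for instance $2D \sim K_X$ for all $D$ in a positive-dimensional family is impossible unless $\deg K_X = 2d$ is consistent with the dimension count, i.e. again $g \leq d+1$-type bounds, and the hyperelliptic case pins $A$ inside a translate of the pullback of $\Sym^* \mathbb{P}^1$, forcing $d$ large relative to $g$. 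The case $2D \sim 0$ cannot hold for general $D$ unless $A$ is a point.

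The main obstacle I anticipate is handling the equality case of Clifford cleanly while keeping track of the "general $D$" hypothesis: one must ensure that the family $A$ does not conspire to sit entirely inside the exceptional locus of Clifford's theorem without already being constrained by small genus. The cleanest route is probably to combine the inequality $\dim|2D| \leq d$ (Clifford) with the \emph{previous} lemma $\dim|2D| \geq \dim A$ and a monodromy/genericity argument showing that if $\dim|2D| = d$ on all of $A$ then the associated map $X \to \mathbb{P}^d$ (or its image) has degree and genus incompatible with $g > d$ — essentially a Castelnuovo-type bound (\Cref{csineq} may help here, by producing a common factorization that bounds $g$). I would expect the write-up to be short once the reduction to Clifford's equality case is in place; the delicate point is simply verifying that "general in a positive-dimensional abelian variety" is enough to exclude the three Clifford-boundary scenarios except when $g \leq d$.
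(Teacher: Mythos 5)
Your proposal is correct and follows essentially the same route as the paper: apply Clifford's theorem to the degree-$2d$ divisor $2D$, use the genericity of $D$ in the positive-dimensional family $A$ (so the class of $2D$ varies and cannot equal a fixed class such as $K_X$ or a multiple of the hyperelliptic divisor, and $2D\not\sim 0$ since it is effective of positive degree) to exclude the Clifford equality cases, and conclude $2D$ is nonspecial, whence Riemann--Roch gives $g \leq d$. The ``delicate point'' you flag is handled in the paper by exactly this one-line genericity observation, so no further monodromy or Castelnuovo-type argument is needed.
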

\begin{proof}
 Let $D$ be general and such that $\dim |2D| \geq d$. Then as $\deg 2D = 2d$ and $h^0(X, 2D) \geq d + 1$, either this $2D$ is not special ($h^0(X, K_X - 2D) = 0$) or they are the canonical divisor or a multiple of the hyperelliptic divisor (if one exists) by Clifford's Theorem. By genericity, such equivalences cannot hold and so $2D$ is not special. By the Riemann--Roch Theorem, this forces the genus of the curve to be at most $d$.
\end{proof}

If $A$ is contained in $\Sym^2_X$, this completes the proof of \Cref{abrhar} as $\dim |2D| = 1$ for a general $D \in A(\bar{K})$.

This leaves the case where $A$ is not contained in $\Sym^2 X$ and $\dim |2D| = 2$ for a general $D \in A(\bar{K})$. The next lemma, based on \cite[Theorem 3.5]{KadVog}, controls the behaviour of this linear system.
\begin{lemma}
 The linear system $|2D|$ is birational for a general $D \in A(\bar{K}) \subset \Sym^3 X(\bar{K})$ if $\dim |2D| = 2$.
\end{lemma}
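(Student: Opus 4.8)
The plan is to argue by contradiction: suppose $|2D|$ fails to be birational for a general $D\in A(\bar K)$, which, since $A$ is a curve, means it fails for all but finitely many $D$. For such a $D$ the morphism $\phi_D\colon X\to\mathbb P^2$ defined by $|2D|$ factors through its Stein factorisation $\psi_D\colon X\to C_D$, where $C_D$ is the normalisation of the image, $e_D:=\deg\psi_D\ge 2$, and the image is a nondegenerate plane curve (because $\dim|2D|=2$) of some degree $\delta_D$, so $C_D$ has geometric genus at most one. Writing the moving part of $|2D|$ as $\psi_D^\ast L_D$ with $\deg L_D=\delta_D$ gives $e_D\delta_D\le\deg 2D=6$ and $\delta_D\ge 2$, hence $(e_D,\delta_D)\in\{(2,2),(2,3),(3,2)\}$ and $2D=\psi_D^\ast L_D+B_D$ with $\deg B_D\in\{0,2\}$. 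Since $g\ge 4$ makes $\Sym^3 X\to W^{(3)}_X$ birational, a general $D\in A$ also satisfies $\dim|D|=0$, so $D$ is a fixed effective divisor and $2D$ has all multiplicities even. Finally, $X$ has only finitely many degree-$\le 3$ morphisms to $\mathbb P^1$ (noted above) and only finitely many bielliptic structures (as $\operatorname{Aut}X$ is finite), so the possibilities for $\psi_D$, and in particular for the class $[\psi_D^\ast L_D]$, form a finite set.

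The argument then splits according to $C_D$. If $C_D\cong\mathbb P^1$ with $(e_D,\delta_D)=(3,2)$, then $B_D=0$ and $[2D]=2[g^1_3]$ lies in a finite set; but $[2D]$ must sweep out a translate of the elliptic curve $A$ (multiplication by $2$ being surjective on $A$), a contradiction, and likewise if $C_D\cong\mathbb P^1$ with $(e_D,\delta_D)=(2,3)$, where $X$ is hyperelliptic and $[2D]=3[g^1_2]$. If $C_D$ is a genuine elliptic curve, then necessarily $(e_D,\delta_D)=(2,3)$, $B_D=0$, and $2D=\psi_D^\ast N_D$ with $N_D$ effective of degree $3$; but the pullback $\psi_D^\ast N_D$ is reduced away from the ramification of $\psi_D$, whereas $2D$ has all multiplicities $\ge 2$, so $\operatorname{supp}(D)$ lies in the finite ramification locus and only finitely many such $D$ exist — again impossible. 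In the remaining case $C_D\cong\mathbb P^1$ with $(e_D,\delta_D)=(2,2)$, $X$ is hyperelliptic, $\deg B_D=2$, and $2D=\psi_D^\ast E'+B_D$ for some effective degree-$2$ divisor $E'$ on $\mathbb P^1$; comparing multiplicities (using that $2D$ is everywhere even) forces $D$ either to contain a fibre of the hyperelliptic map or to contain two Weierstrass points, and in every sub-case $[D]$ is then confined to a finite union of translates of the curve $W^{(1)}_X\cong X$, which has genus $\ge 5$ and so cannot contain the elliptic curve $A$.

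The main obstacle I expect is precisely this final step: checking every pair $(e_D,\delta_D)$, and, in the elliptic-target and $(2,2)$ cases, pushing the observation that ``a pullback along $\psi_D$ is generically reduced, while $2D$ is everywhere non-reduced'' through carefully enough to confine $D$ to a locus too small to contain $A$. This is where the hypothesis $g\ge 5$ (so that $W^{(1)}_X$ is not itself elliptic) is used, together with the standing assumption $A\not\subset\Sym^2 X$, which rules out the degenerate possibility that the divisors parametrised by $A$ share a common point.
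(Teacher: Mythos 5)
Your proposal is correct in substance and begins exactly as the paper does: a non-birational $|2D|$ factors through a curve of genus at most $1$ with $(\deg\psi_D,\deg(\text{image}))\in\{(2,2),(2,3),(3,2)\}$, and the finiteness of degree $\le 3$ maps to genus $0$ and $1$ curves pins $\psi_D$ down to a finite list as $D$ varies in $A$. Where you diverge is the endgame. The paper invokes the identity $D=f^{-1}(f(D))$ (from Kadets--Vogt) for $D$ away from the branch locus, so that $3=\#D=m\,\#f(D)$ kills $m=2$ outright and forces $D$ to be a full fibre when $m=3$, whence $\dim|D|\ge 1$ for infinitely many pairwise non-equivalent $D$ --- a contradiction in one line. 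You instead compare divisor classes: $[2D]=2[D]$ must sweep out a translate of $A$ under the (surjective) multiplication-by-$2$ map, while in the rational-image cases $[2D]$ is confined to the finite set $\{2[g^1_3]\}$ or $\{3[g^1_2]\}$, and in the remaining cases multiplicity considerations confine $[D]$ to finitely many translates of $W^{(1)}_X$ or force $D$ to meet a finite ramification set. Your route is longer, but it buys something real: by carrying the base locus $B_D$ explicitly you handle the $(2,2)$ case with $\deg B_D=2$, which the paper's equation $6=m\deg Y$ silently assumes away. The paper's route buys brevity and avoids the case analysis entirely.

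One step of yours is overstated. In the elliptic-image case, from $2D=\psi_D^*N_D$ with $\deg N_D=3$ you conclude that $\operatorname{supp}(D)$ \emph{lies in} the ramification locus; parity only forces this at the points of $N_D$ of odd multiplicity. When $N_D=2q_1+q_2$ the divisor $D$ may consist of the full fibre over $q_1$ plus a single ramification point $r$ over $q_2$, so $D$ still moves in a one-parameter family. This is repaired exactly by the standing assumption you cite at the end: a point $r$ common to infinitely many $D\in A$ would be a base point of the family, which is excluded since $A\not\subset\Sym^2 X$ (so $A\cap(r+W^{(2)}_X)$ is finite for each of the finitely many $r$). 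With that patch, and noting that your appeal to ``genus $\ge 5$'' in the $(2,2)$ case is unnecessary (a curve of genus $\ge 2$ embedded in its Jacobian already contains no elliptic curve), the argument is complete.
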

\begin{proof}
 Suppose not, then the induced map $\phi : X \to \mathbb{P}^2$ factors as $f : X \to Y$ and $Y \hookrightarrow \mathbb{P}^2$. Let $m$ denote the degree of $f$, then $6 = \deg 2D = m \deg Y$, and since $Y$ is non-degenerate (it is the image of a complete linear system), $\deg Y \geq 2$. This shows $m = 2$ or $3$ and $\deg Y = 3$ or $2$ respectively, so the genus of $Y$ is at most 1. As already noted, $X$ has only finitely many maps to curves of genus 0 of degree at most 3. Similarly, by a theorem of Kani \cite[Corollary after Theorem 4]{Kani}, it has at most finitely many maps to genus 1 curves of degree at most 3, up to automorphisms of the target. As there are finitely many maps, $D$ can be assumed to be disjoint from the branch locus of these maps, and varying $D$ shows that at least one of these maps occurs infinitely often as $X \to Y$. As noted in \cite[Theorem 3.5]{KadVog}, we have $D = f^{-1}(f(D))$ as sets, and so $3 = \#D = \# f^{-1}(f(D)) = m \# f(D)$. This rules out $m = 2$, and if $m = 3$, then $f(D)$ is a point. This shows a general $D$ has $h^0(X, D) \geq 2$, contradicting the fact that $X$ has only finitely many maps to $\mathbb{P}^1$ of degree at most 3.
\end{proof}

It remains to exclude the possibility that $|2D|$ is birational onto its image. We start with the following, based on \cite[Lemma 4.5]{KadVog}.
\begin{lemma}
 Assume $\dim |2D| = 2$ and is birational onto its image for a general $D \in A(\bar{K})$. Let $x$ be in the support of $D$, and $D'$ another element of $A(\bar{K})$ containing $x$, then $D' \cap D = \{x\}$.
 \label{good_intersection}
\end{lemma}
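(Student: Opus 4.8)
The plan is to argue by contradiction and to reduce everything to the fact that a morphism to an elliptic curve is either constant or surjective. Suppose the conclusion fails for a general $D \in A(\bar{K})$, so that a general $D$ shares a sub-divisor of degree at least $2$ with some $D' \in A(\bar{K})$, $D' \neq D$. First I would note that a general $D$ has three distinct points in its support: otherwise $A$ lies in the locus $\{P + P + Q\} \subset \Sym^3 X$, and sending such a divisor to its repeated point gives a non-constant morphism from the elliptic curve $A$ to $X$, impossible for $g \geq 2$ by Riemann--Hurwitz. Setting $E := \gcd(D, D')$ (which has degree exactly $2$, as $D \neq D'$), we may then write $D = E + z$ and $D' = E + z'$ with $z \neq z'$, and — crucially — $D - D' = z - z'$ \emph{as divisors}, so that $[z] - [z'] = [D] - [D']$ lies in the $1$-dimensional abelian subvariety $A_0 := A - A$ of $J_X$.

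Next I would organise the varying family. Let $\widetilde{A}$ be the normalisation of a component, dominating $A$ under the first projection, of the incidence locus $\{(D, D') \in A \times A : D \neq D',\ \deg\gcd(D, D') \geq 2\}$. This locus is a curve: for fixed $D$ and a fixed degree-$2$ sub-divisor $E \leq D$, the divisors $E + P$ with $P \in X$ sweep out a copy of $X$ inside $\Sym^3 X$ which meets the irreducible curve $A$ in only finitely many points (it cannot contain $A$, again since $A$ has genus $1 < g$), and $D$ has only finitely many such $E$. On $\widetilde{A}$ the assignments $(D, D') \mapsto z$ and $(D, D') \mapsto z'$ are morphisms to $X$, and $z$ is non-constant: were $z$ identically some $z_0$, then $z_0$ would lie in the support of a dense set of elements of $A$, hence of every element of $A$, contradicting $A \not\subseteq \Sym^2 X$. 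Finally, subtracting the two projections composed with $A \hookrightarrow J_X$ yields a morphism $\delta : \widetilde{A} \to A_0$ with $\delta(D, D') = [z] - [z']$.

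The conclusion then comes from two cases. A morphism from the smooth projective curve $\widetilde{A}$ to the elliptic curve $A_0$ is constant or surjective; in either case, unless it is a nonzero constant, $\delta^{-1}(0) \neq \emptyset$. Picking a point $(D, D')$ of $\widetilde{A}$ over $0$ gives $[z] = [z']$, which for $g \geq 2$ forces $z = z'$ (distinct points cannot be linearly equivalent), hence $D = E + z = E + z' = D'$, contradicting $D \neq D'$. In the remaining case $\delta \equiv c$ for some $c \neq 0$, so $[z] \in W^{(1)}_X \cap (W^{(1)}_X + c)$ at every point of $\widetilde{A}$, where $W^{(1)}_X \subseteq J_X$ denotes an Abel--Jacobi image of $X$; since $z$ is non-constant this intersection is infinite, so the two irreducible curves coincide, $W^{(1)}_X = W^{(1)}_X + c$. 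But then translation by $c$ restricts to an automorphism $\sigma$ of $X \cong W^{(1)}_X$ with $[\sigma P - P] = c$ for all $P$; comparing two points gives $\sigma P + R \sim \sigma R + P$, which for $g \geq 2$ forces $\sigma = \mathrm{id}$ and hence $c = 0$, a contradiction.

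The step requiring the most care is the middle one: checking that the incidence locus really is a curve and that the residual-point map $z$ is genuinely non-constant, so that $\delta$ is an honest morphism of curves to which the "constant or surjective" dichotomy applies. The two geometric inputs it leans on — that a genus $\geq 2$ curve receives no non-constant morphism from an elliptic curve, and that the translation stabiliser of $W^{(1)}_X$ in $J_X$ is trivial for $g \geq 2$ — are standard but should be invoked explicitly.
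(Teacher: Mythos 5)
Your Case~A does not close. The ``constant or surjective'' dichotomy requires $\delta$ to be a morphism from a \emph{projective} curve, so $\widetilde{A}$ must be the normalisation of the \emph{closure} of the incidence locus $\{(D,D') : D \neq D',\ \deg\gcd(D,D') \geq 2\}$ --- and the condition $D \neq D'$ is only guaranteed on the dense open part. When $\delta$ is non-constant, a point of $\widetilde{A}$ over $0 \in A_0$ satisfies $[D]=[D']$, hence $D=D'$ (Abel--Jacobi is injective on $A$ here), so that point necessarily lies on the boundary $\widetilde{A} \cap \Delta_A$, where the ``contradiction with $D \neq D'$'' is unavailable; indeed, at any diagonal boundary point one gets $z=z'$ automatically by taking limits of $D = E+z$, $D'=E+z'$, so $\delta$ vanishes there for free. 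All Case~A establishes is that the closure of the incidence curve meets the diagonal, which is not absurd. (Case~B, the non-zero constant case, is fine, as is your reduction showing the incidence locus is a curve and $z$ is non-constant.) A telling symptom is that your argument never invokes either hypothesis of the lemma --- neither $\dim|2D|=2$ nor the birationality of $\phi_{|2D|}$ --- yet these are exactly what the statement turns on. A secondary issue: the paper applies this lemma in the last paragraph of the section with $A$ an abelian \emph{surface}, and your argument uses $\dim A = 1$ essentially (the incidence locus being a curve, $A_0$ being an elliptic curve, step~1's Riemann--Hurwitz argument), so even a repaired Case~A would not cover that application.

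For comparison, the paper defers to Kadets--Vogt (Lemma~4.5 of their paper), whose argument is a short projective one using the hypotheses directly: let $\phi = \phi_{|2D|} : X \to \mathbb{P}^2$, so that every effective divisor linearly equivalent to $2D$ is cut out by a unique line. If $D$ and $D'$ shared two points $x \neq y$, then for $D$ general the birationality of $\phi$ gives $\phi(x) \neq \phi(y)$, and the line cutting out $2D$ and the line cutting out $D' + D''$ (where $D'' \in A$ satisfies $D'+D'' \sim 2D$) both pass through $\phi(x)$ and $\phi(y)$, hence coincide; thus $D'+D'' = 2D$ as divisors, which for $D$ general is incompatible with $D' \neq D$. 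I would suggest either adopting that route or finding an argument that genuinely excludes the diagonal from the closure of your incidence curve; as written, the first case of your dichotomy proves nothing.
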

\begin{proof}
 See the proof of \cite[Lemma 4.5]{KadVog}.
\end{proof}

We handle the two cases in the theorem separately, starting with the genus of $X$ being at least 5, following \cite[Proposition 4.7]{KadVog}.

\begin{proposition}
 If the genus of $X$ is at least 5, and a general $D \in A(\bar{K}) \subset \Sym^3 X(\bar{K})$ has $\dim |2D| = 2$, then the second half of \Cref{fibre_lemma} is met.
\end{proposition}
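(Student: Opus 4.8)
The plan is to argue through the incidence correspondence and a count of linear series, essentially reproducing \cite[Proposition 4.7]{KadVog}. Set $\Gamma = \{(P, D) \in X \times A : P \in \operatorname{supp}(D)\}$, with projections $\pi_X : \Gamma \to X$ and $\pi_A : \Gamma \to A$. Since the fibre of $\pi_A$ over $D$ is $\operatorname{supp}(D)$, the map $\pi_A$ is finite of degree $3$, so $\dim \Gamma = \dim A$, and the condition that a general point of $X(\bar K)$ appears exactly once in the support of elements of $A(\bar K)$ is precisely that $\pi_X$ is generically one-to-one. (If $\pi_X$ is generically one-to-one, the component of $\Gamma$ dominating $X$ is birational to $X$ and still maps onto $A$ with degree $3$, which forces $\dim A = 1$; \Cref{fibre_lemma} then supplies the degree-$3$ morphism $X \to A$.) So it suffices to derive a contradiction from the assumption that a general $P \in X(\bar K)$ lies on at least two distinct elements of $A(\bar K)$.

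First I would record the local structure forced by \Cref{good_intersection}. Choose $D = P_1 + P_2 + P_3$ general in $A$; then $P_1, P_2, P_3$ are general points of $X$, so each $P_i$ lies on some $D_i \in A(\bar K)$ with $D_i \ne D$, and $D$ may be taken general enough that $\dim|2D| = 2$, $|2D|$ is birational onto its image, and \Cref{good_intersection} applies. Applied to $D$ and $D_i$ (which share $P_i \in \operatorname{supp} D$) it gives $\operatorname{supp}(D) \cap \operatorname{supp}(D_i) = \{P_i\}$; so $D_i = P_i + E_i$ with $E_i$ reduced of degree $2$ and disjoint from $\{P_1, P_2, P_3\}$, and comparing supports shows $D, D_1, D_2, D_3$ are pairwise distinct. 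A further application of \Cref{good_intersection} to pairs $D_i, D_j$ lets one also take the $E_i$ pairwise disjoint after shrinking the allowed locus of $D$, the residual cases being handled identically with $D$ replaced by one of the $D_i$.

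The main work — and the step I expect to be the real obstacle — is converting this "spreading out" into a numerical contradiction. The idea is to manufacture from $D$ and the $D_i$ an effective divisor of degree $6$ (for instance a representative of $D_i + D_j$, or of $2[D] - [D_i] + [D_j]$, or one of the divisors $D_\beta + D_{-\beta} \sim 2D$ produced by the Abramovich--Harris translation trick specialised to $\beta = [D_i] - [D]$) and to show the extra incidences make the corresponding complete linear system larger than the numerics allow: either $\dim|2D| \ge 3$, contradicting the hypothesis, or a special divisor of degree $6$ whose dimension violates Clifford's inequality once $g \ge 5$; equivalently, one exhibits two points of $X$ carried to the same point of the plane sextic model cut out by $|2D|$, contradicting birationality. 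Carrying this through is the content of \cite[Proposition 4.7]{KadVog}, and the same computation simultaneously rules out $\dim A = 2$. Two points need care beyond \cite{KadVog}: the tightness of the estimate at $g = 5$, where $|2D|$ realises $X$ as a plane sextic with exactly $10 - g = 5$ nodes so there is little slack; and the case of small gonality, since — unlike \cite{KadVog} — we do not assume the gonality of $X$ exceeds $3$, so one must separately check that when $X$ is hyperelliptic or trigonal the contracted pencil $g^1_2$ or $g^1_3$ cannot be absorbed into $A$ and that the general-position reductions above still hold.
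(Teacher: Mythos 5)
Your setup is sound and matches the spirit of the paper's argument: reduce to showing that a general point of $X$ cannot lie in the support of two distinct elements of $A$, and use \Cref{good_intersection} to force any two incident elements $D', D''$ of $A$ to meet in exactly one point. But the proof stops exactly where the proof has to happen. You say the conversion of this incidence data into a contradiction is ``the real obstacle'' and that ``carrying this through is the content of \cite[Proposition 4.7]{KadVog}'' --- that is the entire content of the proposition, and deferring it to the reference (which, as the paper notes, works under a gonality hypothesis this paper does not assume) leaves a genuine gap. Moreover, the numerical route you sketch at degree $6$ cannot close: every divisor of the form $D' + D''$ with $D', D'' \in A(\bar{K})$ is linearly equivalent to $2E$ for some $E \in A(\bar{K})$ by divisibility of $A$, so $h^0 = 3$ holds automatically and carries no information about incidences; and for $g \geq 5$ a degree $6$ special divisor with $\dim|2D| = 2$ is perfectly consistent with Clifford, so no contradiction of the form ``$\dim|2D| \geq 3$ or Clifford fails'' is available at degree $6$.

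The paper's actual argument has to go up to degree $9$. Writing $3D \sim D_1 + D_2 + D_3$ with the $D_i \in A(\bar{K})$ disjoint, the spaces $H^0(D_1 + D_2)$ and $H^0(D_1 + D_3)$ are each $3$-dimensional (both sums are $\sim 2E$) and meet only in the constants for general $D_1$, giving $\dim|3D| \geq 4$; Clifford plus Riemann--Roch give $\dim|3D| \leq 4$ once $g \geq 5$. Then $|3D|$ places $X$ in $\mathbb{P}^4$, and since $\dim|3D - D'| = 2$ for a general $D' \in A(\bar{K})$, the span of $D'$ is a line. If some $D'' \in A(\bar{K})$ meets $D'$ (necessarily in exactly one point, by \Cref{good_intersection}), its span is a line meeting that line, so projection away from the span of $D'$ contracts $D''$ to a point --- but this projection realises $|3D - D'| \cong |2E|$, under which a general $D''$ spans a line. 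That contradiction, not a degree-$6$ section count, is what forces the second hypothesis of \Cref{fibre_lemma} to hold. (Also, your parenthetical claim that the same computation rules out $\dim A = 2$ is not how the paper proceeds: abelian surfaces are excluded afterwards by a separate argument, via the non-existence of generically injective maps from a Kummer surface to $\mathbb{P}^2$.)
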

\begin{proof}
 We start by bounding $\dim |3D|$. Write $3D \sim D_1 + D_2 + D_3$, where $D_i \in A(\bar{K})$ and the $D_i$ are disjoint, then $D_1 + D_2 \sim 2E$ and $D_1 + D_3 \sim 2E'$ for some $E, E' \in A(\bar{K})$ as abelian varieties are divisible groups. In particular, $h^0(X, D_1 + D_2), h^0(X, D_1 + D_2) \geq 3$. The intersection of these two spaces of sections is $H^0(X, D_1)$, but for a general choice of $D_1$, this is spanned by the constant functions. This shows $h^0(X, D_1 + D_2 + D_3) \geq 5$, so $\dim |3D| \geq 4$. 
 On the other hand, if $\dim |3D| \geq 5$, then $3D$ is a degree 9 divisor with $h^0(X, 3D) \geq 6$, which by Clifford's Theorem is not special. This forces the genus of $X$ to be at most 4.
 
 It remains to exclude the case where $\dim |3D| = 4$. For a general $D' \in A(\bar{K})$, $\dim |3D - D'| = 2$ (as this is equivalent to $2E$ for some $E \in A(\bar{K})$). This linear system is equivalent to the one obtained from $|3D|$ by projecting away from the span of $D'$, which shows the span of $D'$ is a line. Either the assumption of the second part of \Cref{fibre_lemma} holds, or for $x \in D'$, there exists a $D'' \in A(\bar{K})$ with $D' \neq D''$ such that $x \in D' \cap D''$. By the previous lemma, $\{x\} = D' \cap D''$ as $D'$ general. The linear system $|2D + D'|$ is 4-dimensional and $D''$ spans a line. Projecting away from the span of $D'$ contracts $D''$ to a point, as they meet at $x$. However, this is the same as the image of $D''$ under $|2D|$, and as the image of $D''$ under $|2D|$ is generally a line (as $D$ and $D''$ are a general pair). This is a contradiction, so the system is not birational onto its image.
\end{proof}

To rule out abelian surfaces (higher dimensional abelian varieties are already excluded by the upper bounds on the dimension of $|2D|$), we note that the assumptions of \Cref{fibre_lemma} cannot be met, and so $\dim |2D| = 2$ and is birational for a general $D \in A(\bar{K})$. There is a map from $A$ to $\mathbb{P}^2 = |2D|$ given by $D' \mapsto D' + D''$, where $D''$ is the element of $A$ equivalent to $2D - D'$. This map factors through the Kummer surface of $A$ (given by the involution $D' \mapsto D''$), and there are no generically injective maps from a Kummer surface to $\mathbb{P}^2$. This implies that for a general $D'$, there exists a distinct pair $E, E' \in A(\bar{K})$ such that $D' + D'' = E + E'$ as divisors. As all 4 divisors have degree 3, we may assume that after swapping $E$ and $E'$, $\# (D' \cap E) = 2$, which contradicts \Cref{good_intersection}. 

\section{Genus $\geq 5$ curves}
We now apply the results of the previous section to the study of cyclic cubic points on curves of genus at least 5 to prove \Cref{mainthm1}.
\begin{theorem}
Let $X$ be a curve of genus at least 5, then $X$ has infinitely many cyclic cubic points if and only if there exists $f : X \to Y$ where $\mathrm{deg}(f) = 3$, and $Y$ is either a $\mathbb{P}^1$ or a positive rank elliptic curve, and the same holds for $Y_{\Delta(f)}$. Moreover, all but finitely many cyclic cubic points on $X$ come from morphisms of this type.
\label{classgen5}
\end{theorem}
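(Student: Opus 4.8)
The plan is to prove the equivalence and the final ``moreover'' clause together, pivoting on a fixed finite list of morphisms. The backward implication is immediate from \Cref{mapinf}: such an $f$ has infinitely many cyclic cubic fibres, hence $X$ has infinitely many cyclic cubic points. For the remaining direction, I would fix once and for all the finitely many degree-$3$ morphisms $f_1, \dots, f_r$ from $X$ to curves of genus at most $1$, taken up to automorphism of the target; finiteness holds because (with $g(X) \ge 4$) $X$ has only finitely many $g^1_3$'s and, by Kani's theorem \cite{Kani}, only finitely many degree-$3$ maps to genus-$1$ curves up to target automorphism. A morphism $f \colon X \to Y$ as in the theorem has $g(Y) \le 1$, so every such ``good'' morphism occurs among the $f_j$, and a cyclic cubic point ``comes from a morphism of this type'' precisely when its Galois orbit is a fibre of a good $f_j$.

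Now suppose $X$ has infinitely many cyclic cubic points, i.e.\ infinitely many Galois orbits in $(X^3 / C_3)(K)$, and let $\mathcal{U}$ be the set of such orbits that are \emph{not} a fibre of any $f_j$; the crux is to show $\mathcal{U}$ is finite. Let $\pi \colon \mathcal{U} \to W^{(3)}_X(K)$ be the restriction of the composite $(X^3/C_3)(K) \to \Sym^3 X(K) \to W^{(3)}_X(K)$. I would first check $\pi$ is finite-to-one: if some fibre $\pi^{-1}(w)$ were infinite then $\dim|w| \ge 1$, and, running the argument in the proof of \Cref{finmap}, $|w|$ is a base-point-free $g^1_3$ (a base point would be a positive-degree $K$-rational subdivisor of a cyclic cubic orbit, which is impossible since such an orbit is a transitive Galois orbit of three points; a $g^2_3$ would force $g(X) \le 1$). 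The resulting degree-$3$ map $X \to \mathbb{P}^1$ equals some $f_j$ up to target automorphism, and every orbit in $\pi^{-1}(w)$ is one of its fibres, contradicting the definition of $\mathcal{U}$.

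Hence $\pi(\mathcal{U})$, and so $W^{(3)}_X(K)$, is infinite. By Faltings' theorem on rational points of subvarieties of abelian varieties, after a suitable choice of Abel--Jacobi basepoint $W^{(3)}_X$ contains an abelian subvariety $A$ defined over $K$ with $A(K)$ infinite and meeting $\pi(\mathcal{U})$ infinitely often. Since $g(X) \ge 5$, \Cref{abrhar} forces $A$ to be an elliptic curve --- of positive rank, as $A(K)$ is infinite --- and gives a morphism $f \colon X \to A$ of degree at most $3$ whose fibres are the degree-$3$ divisors with class in $A$, minus a fixed divisor $F$ of degree $3 - \deg f$. If $\deg f \le 2$, then $F$ is a positive-degree $K$-rational divisor lying inside infinitely many cyclic cubic orbits, impossible by the transitivity argument; so $\deg f = 3$, and infinitely many orbits in $\mathcal{U}$ are honest fibres of $f$ --- but $f$ is again one of the $f_j$, a contradiction. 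Thus $\mathcal{U}$ is finite. It follows that $(X^3/C_3)(K) \setminus \mathcal{U}$ is infinite and consists of orbits that are fibres of the $f_j$, so by pigeonhole some $f_{j_0}$ has infinitely many cyclic cubic fibres; \Cref{mapinf} then shows its target and its discriminant curve are each a $\mathbb{P}^1$ or a positive rank elliptic curve, giving the forward implication. For the ``moreover'', any $f_j$ that is not good has only finitely many cyclic cubic fibres (by \Cref{mapinf} if the discriminant curve fails the condition, and because a cyclic cubic fibre forces a rational point on the target when the target is a rank-$0$ elliptic curve), so as the list is finite, all but finitely many cyclic cubic points are fibres of a good morphism.

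The step I expect to be the main obstacle is the finite-to-one branch: squeezing out of Faltings' theorem an honest abelian subvariety of $W^{(3)}_X$ that is defined over $K$ and supports infinitely many rational points (the basepoint-translation bookkeeping), applying \Cref{abrhar} to it, and excluding $\deg f \le 2$ --- the last relying on the same ``no positive-degree $K$-rational subdivisor of a cyclic cubic orbit'' observation that already drives the $\pi$ finite-to-one step. Phrasing everything in terms of the fixed list $f_1, \dots, f_r$ is what allows a single argument to deliver both the equivalence and the ``moreover''.
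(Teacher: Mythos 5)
Your proposal is correct and follows essentially the same route as the paper: push cyclic cubic orbits through $X^3/C_3 \to \Sym^3 X \to W^{(3)}_X$, handle infinite fibres via the base-point-free $g^1_3$ argument of \Cref{finmap}, handle an infinite image via Faltings plus \Cref{abrhar} (excluding degree $\le 2$ by the transitivity of a cyclic cubic Galois orbit), and finish with \Cref{mapinf}. Your bookkeeping with the fixed finite list $f_1,\dots,f_r$ and the set $\mathcal{U}$ is a somewhat tidier way to extract the ``moreover'' clause than the paper's ``repeat with the known points removed,'' but the underlying argument is the same.
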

\begin{proof}
The curve $X$ has infinitely many cyclic cubic points if and only if $X^3 / C_3$ has infinitely many rational points, so we assume $X^3 / C_3$ has infinitely many rational points. There is a sequence of maps $X^3 / C_3 \to \Sym^3 X \to W^{(3)}_X$ given by taking the further quotient, and then applying the Abel--Jacobi map.

We first assume the image of the set of rational points on $X^3 / C_3$ is not a finite subset of $W^{(3)}_X$. By Faltings' Theorem \cite{Fal}, in $W^{(3)}_X$ all but finitely many rational points come from subvarieties that are positive rank abelian varieties. As there are infinitely many rational points on $W^{(3)}_X$ from $X^3 / C_3$, there must be (a translate of) an abelian subvariety that contains infinitely many of these points. By \Cref{abrhar}, this abelian variety is a positive rank elliptic curve, $E$, and the points of $E$, viewed as divisors, are the fibres of a map $X \to E$. If the map is not 3-1, the family of divisors has a basepoint, this would force $X$ to have infinitely many rational points, as in Theorem~\ref{finmap}. The map can therefore be assumed to be 3-to-1. As infinitely many of the fibres are defined over cyclic cubic fields, by \Cref{mapinf}, the discriminant curve is also a positive rank elliptic curve.

Next, assume instead that the rational points of $X^3 / C_3$ map to only finitely many points in $W^{(3)}_X$. This shows the map $(X^3 / C_3)(K) \to W^{(3)}(K)$ is not finite-to-one, and so by Theorem~\ref{finmap}, $X$ has a degree 3 map to $\mathbb{P}^1$ and the discriminant curve is either a $\mathbb{P}^1$ or a positive rank elliptic curve.  

For the ``all but finitely many'' statement, the above steps can be repeated with the rational points of $X^3 / C_3$ with the points associated to a known morphism removed.

The converse is Theorem~\ref{mapinf}.
\end{proof}

This theorem can be impractical, since it requires understanding all maps of degree 3 from the curve to genus 0 and 1 curves. To simplify this, we show that these conditions can be replaced by much more directly computable constraints. The Castelnuovo--Severi inequality (\Cref{csineq}) shows any curve with 2 degree 3 maps to $\mathbb{P}^1$ has genus at most 4, so genus 5 curves have at most 1. Combining this with the condition imposed on Weierstrass points given by Proposition~\ref{weier}, simplifies this case considerably.

The following proposition gives a criterion for maps to genus 1 curves.
\begin{proposition}
Suppose that there exists a morphism $f : X \to E$ of degree 3, where $E$ is an elliptic curve, such that the discriminant curve is another elliptic curve. Then either $X$, or a geometrically integral, unramified, double cover, $Y$ has a map $g : Y \to E'$ of degree 3 and Galois group $C_3$ where $E'$ is an elliptic curve.
\label{gen1}
\end{proposition}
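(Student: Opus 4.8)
The plan is to analyze the ramification structure of $f : X \to E$ in the same spirit as Propositions~\ref{weier} and \ref{gen2weier}, but now with an elliptic base, and use the resulting divisor-class relations to produce a $C_3$-cover. Since $f$ has degree $3$ and $E$ has genus $1$, the Riemann--Hurwitz formula gives $2g(X) - 2 = 3(2 \cdot 1 - 2) + \deg R = \deg R$, so there are $2g - 2$ ramification points counted with multiplicity. The discriminant of $f$ vanishes to odd order exactly at the simple (index $2$) branch points, so for $E_{\Delta(f)}$ to be an elliptic curve (genus $1$) rather than something of higher genus, the double cover $E_{\Delta(f)} \to E$ can be branched at at most $4$ points; hence $f$ has at most $4$ ramification points of index $2$, leaving at least $(2g-2 - 4)/2 = g - 3$ points of index $3$. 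Unlike the $\mathbb{P}^1$ case, this alone does not force a $C_3$-structure, so I would instead use a different, cleaner dichotomy: either \emph{every} ramification point of $f$ has index $3$ (so the discriminant is, up to squares in $K(E)$, a perfect square and $f$ is already Galois with group $C_3$ over $K$ — taking $Y = X$, $E' = E$, $g = f$), or there is at least one index-$2$ ramification point, in which case the double cover $\pi : E_{\Delta(f)} \to E$ is genuinely branched.

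In the latter case, consider the base change $Y' := X \times_E E_{\Delta(f)}$. The key point is that pulling back a degree-$3$ extension by the quadratic extension that trivializes its discriminant produces a $C_3$ (rather than $S_3$) extension: over $K(E_{\Delta(f)}) = K(E)(\sqrt{\Delta})$, the degree-$3$ function-field extension $K(X)/K(E)$ now has square discriminant, so $K(X) \cdot K(E_{\Delta(f)})$ is a $C_3$-extension of $K(E_{\Delta(f)})$. Thus $Y' \to E_{\Delta(f)}$ is a (geometrically possibly reducible — but over $K$ irreducible) degree-$3$ cover with Galois group $C_3$, realizing $E' := E_{\Delta(f)}$ as the target. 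It remains to identify $Y'$, or rather a suitable component of it, with $X$ or a double cover of $X$. The projection $Y' \to X$ is a degree-$2$ map obtained by base change along $\pi$; since $\pi$ is an unramified double cover of $E$ away from its branch locus, $Y' \to X$ is unramified precisely away from the fibres of $f$ over the branch points of $\pi$. I would argue that the only ramification of $Y' \to X$ can occur over points of $X$ lying above these branch points, and that at an index-$2$ ramification point $P$ of $f$ (where $f$ looks like $t \mapsto t^2$ locally and the branch point of $\pi$ is exactly $f(P)$), the two local branches of $\sqrt{\Delta}$ come together in a way that makes $Y' \to X$ unramified there as well — so after normalizing, $Y := \widetilde{Y'}$ (or its irreducible component mapping to $X$ with degree $2$) is an unramified double cover of $X$.

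The main obstacle I anticipate is the bookkeeping at the index-$2$ ramification points: one must check carefully that $Y \to X$ is genuinely \emph{unramified} there (not just tamely ramified), and that $Y$ is geometrically integral — or, if $Y'$ is reducible, that one selects the correct component, which then must be shown to still dominate $E_{\Delta(f)}$ with degree $3$ and Galois group $C_3$. Reducibility of $Y'$ is exactly the case where $X$ already has the $C_3$-structure over $K$ (the discriminant curve splits), which should be handled first and separately, matching the final paragraph of the proof of Proposition~\ref{gen2weier}. A secondary point is to confirm geometric integrality of the double cover: since $E_{\Delta(f)} \to E$ is a nontrivial connected double cover and $f$ is nonconstant, the fibre product is connected provided $K(X)$ and $K(E_{\Delta(f)})$ are linearly disjoint over $K(E)$, which holds because $\sqrt{\Delta} \notin K(X)$ (otherwise $f$ would already be Galois, the excluded case). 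With these checks in place, $g : Y \to E'$ is the desired degree-$3$, $C_3$-Galois morphism.
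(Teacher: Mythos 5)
Your fibre-product construction is the right central idea, but the case analysis surrounding it contains a genuine error that leaves the actual content of the proposition unproved. The hypothesis is that $E_{\Delta(f)}$ has genus $1$; since it is a double cover of the genus-$1$ curve $E$, Riemann--Hurwitz forces that cover to be branched at \emph{zero} points, not ``at most $4$'' as you claim (two branch points already give genus $2$, four give genus $3$). Consequently $f$ has \emph{no} index-$2$ ramification points at all, and the second branch of your dichotomy --- ``there is at least one index-$2$ ramification point, in which case $\pi : E_{\Delta(f)} \to E$ is genuinely branched'' --- is vacuous under the hypothesis. Worse, the first branch is where the real content lies, and there you assert that if every ramification point has index $3$ then ``the discriminant is, up to squares in $K(E)$, a perfect square and $f$ is already Galois.'' This is false: all ramification being of index $3$ only forces $\mathrm{div}(\Delta)$ to be an even divisor, and on an elliptic curve a function with even divisor need not be a square (its class can be a nontrivial $2$-torsion element of $\mathrm{Pic}^0(E)$, or $\Delta$ can be a nonsquare constant times a square). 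That is exactly the situation the proposition is designed to handle: $E_{\Delta(f)}$ is then a nontrivial \emph{unramified} double cover of $E$, i.e.\ a $2$-isogenous elliptic curve, and $f$ is not Galois over $K(E)$.

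The fix is to run your base-change argument in that unramified case rather than in the branched one. Since $E_{\Delta(f)} \to E$ is forced to be étale, either it is trivial ($\Delta$ a square, $f$ already cyclic, take $Y = X$) or it is a connected unramified double cover; in the latter case $Y = X \times_E E_{\Delta(f)}$ is automatically an \emph{unramified} double cover of $X$ (base change of étale is étale), so all the delicate local bookkeeping at index-$2$ points that you flag as the ``main obstacle'' simply does not arise. Geometric integrality follows as you say from $\sqrt{\Delta} \notin K(X)$ (a degree-$3$ extension has no quadratic subextension), and over $Y$ the discriminant becomes a square, so $Y \to E_{\Delta(f)}$ is the desired degree-$3$ cyclic cover. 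This is the paper's proof; your write-up contains all of its ingredients but attaches them to the wrong alternative of the dichotomy, so as written it does not establish the statement.
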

\begin{proof}
As the discriminant curve of $f$, $E'$, has genus 1, it must either be $E$, or an unramified double cover of $E$ (i.e.\ a 2-isogenous elliptic curve). In the former case, $X$ satisfies the assumptions as the discriminant of $f$ is a square on $E$ and $f$ has Galois group $C_3$, so we assume the second. Let the fibre product of $X$ and $E'$ over $E$ be $Y$. As $E' \to E$ is unramified and degree 2, the same is true of $Y \to X$. As $X \to E$ is degree 3, $Y \to E'$ is, and moreover, since the discriminant of $X \to E$ is a square in $Y$, this map is cyclic. 
\end{proof}

Combining Propositions~\ref{weier} and ~\ref{gen1} with Theorem~\ref{classgen5} gives \Cref{mainthm_comp}.

We illustrate this proposition with an example. 

\begin{example}
Let $X$ be the genus 5 hyperelliptic curve given by the following. 
$$y^2 = (x - 1)(x + 1)(x^9 - x^7 + x^6 + 2x^5 - 3x^4 - x^3 + 3x^2 - 1)$$
This curve has no non-trivial automorphisms (computed in \texttt{Magma} \cite{mag}). As unramified double covers of $y^2 = f(x)$ are all of the form $z^2 = g(x)$, $w^2 = h(x)$ with $f(x) = g(x)h(x)$, there are only 3 defined over $\mathbb{Q}$. All 3 covers come from a factorisation with either a linear or quadratic factor, and so the covers are hyperelliptic by projecting onto this. The Castelnuovo--Severi inequality shows these cannot triple-cover elliptic covers, since all hyperelliptic curves with a degree 3 maps to elliptic curves have genus at most $5$, but these have genus 9. As $X$ cannot be trigonal, again by the Castelnuovo--Severi inequality, this shows that $X$ has only finitely many cyclic cubic points over $\mathbb{Q}$.

On the other hand, this curve has infinitely many cubic points. It is isomorphic to the curve $y^2 = (x^3 - x)^4 - (x^3 - x)^3 + (x^3 - x)$, which triple covers the genus 1 curve, $y^2 = x^4 - x^3 + x$. This is an elliptic curve, isomorphic to $y^2 = x^3 - x + 1$, which has positive rank. Pulling back the rational points on the elliptic curve gives rise to infinitely many cubic points on $X$.
\end{example}

\section{Curves with genus 2, 3 or 4}
Under additional assumptions, it is possible to make statements about lower genus curves. The statements in this section are also true for curves of larger genus, however, the focus is on genus 3 and 4 as the results of the previous section do not apply in these cases.

\begin{theorem}
Let $X$ be a curve of genus $g \geq 3$ and assume that the Jacobian of $X$ contains no elliptic curves, and if $g = 3$, assume also the Bombieri--Lang conjecture for $X^3 / C_3$. Then $X$ has infinitely many cyclic cubic points if and only if there exists a map $f : X \to \mathbb{P}^1$ of degree 3 and such that $\mathbb{P}^1_{\Delta(f)}$ is either a $\mathbb{P}^1$ or a positive rank elliptic curve.
\end{theorem}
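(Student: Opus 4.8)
The plan is to imitate the proof of \Cref{classgen5}, working with the composite
\[
\pi \colon X^3/C_3 \longrightarrow \Sym^3 X \longrightarrow W^{(3)}_X ,
\]
but replacing \Cref{abrhar} by \Cref{finmap} on one branch and, when $g = 3$, by the Bombieri--Lang conjecture on the other. The converse implication is immediate from \Cref{mapinf} applied with $Y = \mathbb{P}^1$, so I assume that $X$ has infinitely many cyclic cubic points, i.e.\ that $(X^3/C_3)(K)$ is infinite. If the image $\pi\big((X^3/C_3)(K)\big)$ is finite, then $(X^3/C_3)(K) \to W^{(3)}_X(K)$ is not finite-to-one, and \Cref{finmap} directly produces a degree $3$ morphism $f \colon X \to \mathbb{P}^1$ with $\mathbb{P}^1_{\Delta(f)}$ either a $\mathbb{P}^1$ or a positive-rank elliptic curve. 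So the whole task is to rule out $\pi\big((X^3/C_3)(K)\big)$ being infinite under the stated hypotheses.

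For $g \geq 4$ I would argue as in \Cref{classgen5}: here $W^{(3)}_X$ is a \emph{proper} subvariety of $J_X$, so by Faltings' theorem an infinite subset of $W^{(3)}_X(K)$ must meet some positive-dimensional translate $B + a \subseteq W^{(3)}_X$ of an abelian subvariety in an infinite set; since $W^{(3)}_X$ is defined only up to translation, a suitable translate then contains the abelian subvariety $B$ through the origin, and \Cref{abrhar} forces $B$ to be an elliptic curve. This elliptic curve lies in $J_X$, contradicting the hypothesis. Note that only the ``$B$ is an elliptic curve'' half of \Cref{abrhar} is used, which is why genus $4$ is allowed.

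For $g = 3$ one has $W^{(3)}_X = J_X$ and the previous argument is vacuous, so we bring in Bombieri--Lang. Since $g(X) \geq 2$, $X^3$ has ample canonical class, and $C_3$ acts on $X^3$ with fixed locus the small diagonal, of codimension $2$; hence $X^3/C_3$ has canonical singularities and ample canonical class, so it is of general type, and the assumed Bombieri--Lang conjecture gives that $(X^3/C_3)(K)$ is not Zariski dense. Let $V \subsetneq X^3/C_3$ be its Zariski closure. The morphism $\pi$ is generically finite (indeed of degree $2$: the first arrow has degree $2$ and $\Sym^3 X \to J_X$ is birational for $g = 3$, as a general degree $3$ divisor is nonspecial), so $\overline{\pi(V)}$ is a proper closed subvariety of $J_X$ containing the infinite set $\pi\big((X^3/C_3)(K)\big)$. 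Applying Faltings' theorem to $\overline{\pi(V)} \subseteq J_X$ yields a positive-dimensional translate $B + a \subseteq \overline{\pi(V)}$ of an abelian subvariety; since $B + a \subsetneq J_X$ we get $1 \leq \dim B \leq 2$, and in either case $J_X$ contains an elliptic curve --- directly when $\dim B = 1$, and via a one-dimensional Poincaré complement of $B$ when $\dim B = 2$ --- again contradicting the hypothesis. This completes the reduction, and \Cref{finmap} supplies the desired $f$.

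The main obstacle is the $g = 3$ step: one has to be sure that $X^3/C_3$ (or a resolution) is genuinely of general type so that Bombieri--Lang is the correct input, that $\pi$ is genuinely generically finite so that $\overline{\pi(V)}$ remains a \emph{proper} subvariety of $J_X$, and that the $\dim B = 2$ possibility is disposed of --- the last being exactly where Poincaré reducibility and the hypothesis that $J_X$ contains no elliptic curves combine. The $g \geq 4$ argument is essentially a special case of \Cref{classgen5} and should require only cosmetic changes.
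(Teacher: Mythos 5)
Your proposal is correct and follows essentially the same route as the paper: show the image of $(X^3/C_3)(K)$ in $W^{(3)}_X(K)$ is finite (via Faltings and \Cref{abrhar} for $g \geq 4$; via Bombieri--Lang plus the simplicity of $J_X$, which follows from Poincar\'e reducibility and the absence of elliptic curves, for $g = 3$), then apply \Cref{finmap} for the forward direction and \Cref{mapinf} for the converse. The only cosmetic differences are that you justify general type of $X^3/C_3$ directly where the paper cites a reference, and you invoke generic finiteness of $\pi$ where a simple dimension count ($\dim V \leq 2 < 3 = \dim J_X$) already suffices.
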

\begin{proof}
We start by showing that the image of $(X^3 / C_3)(K)$ in $W^{(3)}_X(K)$ is finite. 

If $g \geq 4$, then $W^{(3)}_X(K)$ is finite, unless $W^{(3)}_X$ contains an abelian subvariety. By \Cref{abrhar}, this abelian variety must be an elliptic curve, but by assumption the Jacobian of $X$ does not contain an elliptic curve.

If $g = 3$, as $X^3 / C_3$ is of general type \cite{Raw}, the Bombieri--Lang conjecture implies $(X^3 / C_3)(K)$ is contained in a proper subvariety, $Z$, of $X^3 / C_3$. As the Jacobian of $X$ contains no elliptic curves, it is simple. In particular, the image of $Z$ in the Jacobian can contain no abelian subvarieties. 

In both cases, the image of the $K$-points is finite. Therefore, if $(X^3 / C_3)(K)$ is infinite, then there are infinitely many rational points which map to a fibre of $\Sym^3 X \to W^{(3)}_X$. Applying Theorem~\ref{finmap} gives the result in one direction.

The opposite direction is Theorem~\ref{mapinf}.
\end{proof}

Combining this with Proposition~\ref{weier} gives \Cref{mainthm2}. The assumptions can often be verified by Galois theoretic considerations.

\begin{theorem}
Let $X$ be a curve of genus $g \geq 3$. If $g = 3$, assume the Bombieri--Lang conjecture. Suppose that the Jacobian of $X$ contains no elliptic curves and that the Galois action on the Weierstrass points of $X$ is 2-transitive. Then $X$ has finitely many cyclic cubic points. 
\label{gal}
\end{theorem}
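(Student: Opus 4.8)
The plan is to deduce the statement from \Cref{mainthm2}. Two of the three hypotheses of that theorem --- that $J_X$ contains no elliptic curve, and that the Bombieri--Lang conjecture holds for $X^3/C_3$ when $g = 3$ (which follows from the full Bombieri--Lang conjecture assumed here) --- are already in force, so the task reduces to verifying the third: that for every $P \in X(\bar K)$ the set $\{Q \in X(\bar K) : 3P \sim 3Q\}$ has fewer than $g$ elements. I expect in fact to show this set equals $\{P\}$ for every $P$, which suffices since $g \geq 3$.

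The first step is to reduce to a statement about Weierstrass points: if $Q \neq P$ and $3P \sim 3Q$, then $h^0(X, 3P) \geq 2$, and since $3 \leq g$ this gives $h^0(X, gP) \geq 2$, so $P$ --- and symmetrically $Q$ --- is a Weierstrass point (cf.\ the remark after \Cref{weier}). Hence it is enough to show that the set $T$ of ordered pairs $(P, Q)$ of distinct Weierstrass points of $X$ with $3P \sim 3Q$ is empty.

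The key step combines Galois descent with a Riemann--Hurwitz count. Because linear equivalence of divisors over $\bar K$ is Galois-equivariant, $T$ is stable under $\mathrm{Gal}(\bar K / K)$. Suppose $T \neq \emptyset$. By the assumed $2$-transitivity of the Galois action on the Weierstrass locus $W$ of $X$, the group $\mathrm{Gal}(\bar K / K)$ acts transitively on ordered pairs of distinct points of $W$, so $T$ must consist of \emph{all} such pairs; equivalently, $3P \sim 3Q$ for every pair of distinct Weierstrass points. Fix $P_0 \neq Q_0$ in $W$ and put $D = 3P_0 \sim 3Q_0$. Then $|D|$ is base-point-free, since $3P_0$ and $3Q_0$ are members of $|D|$ with disjoint support, and since $\deg D = 3$ and $g \geq 3$, Clifford's theorem together with Riemann--Roch forces $h^0(X, D) = 2$. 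Thus $|D|$ defines a degree $3$ morphism $f : X \to \mathbb{P}^1$ whose fibres are the effective divisors of $|D|$, and every Weierstrass point $P$ satisfies $3P \in |D|$, so $f^{-1}(f(P)) = 3P$ and $P$ is a point of total ramification of $f$. But Riemann--Hurwitz bounds the number of total ramification points of a degree $3$ map to $\mathbb{P}^1$ by $g + 2$, while $|W| \geq 2g + 2 > g + 2$. This contradiction shows $T = \emptyset$, hence $\{Q : 3P \sim 3Q\} = \{P\}$ for every $P$, and \Cref{mainthm2} applies.

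The main obstacle --- and the reason the hypothesis is $2$-transitivity rather than mere transitivity --- is precisely this last step. With only transitivity one would learn that the classes of Weierstrass points under the relation $P \sim Q \iff 3P \sim 3Q$ are permuted transitively and hence all share a common size $m \leq g + 2$, but for $g \in \{3, 4\}$, where $X$ may admit several (for $g = 3$, infinitely many) degree $3$ pencils, this need not contradict $|W| \geq 2g + 2$; it is the passage to \emph{ordered} pairs, which $2$-transitivity controls, that forces these classes to coincide. The remaining ingredients --- the reduction to Weierstrass points, base-point-freeness, Clifford's theorem, and the Riemann--Hurwitz bound --- are routine.
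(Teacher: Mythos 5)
Your proposal is correct and follows essentially the same route as the paper: use 2-transitivity of the Galois action to upgrade a single linear equivalence $3P \sim 3Q$ between distinct Weierstrass points to all pairs, extract the resulting degree $3$ pencil, and contradict Riemann--Hurwitz via the excess of totally ramified points, then conclude by the criterion of \Cref{weier} feeding into \Cref{mainthm2}. Your write-up is somewhat more explicit about the reduction to Weierstrass points and the base-point-freeness of $|3P_0|$, but the argument is the same.
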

\begin{proof}
Let $P_1, ..., P_n \in X(\bar{K})$ be the Weierstrass points, where $n \geq 2g + 2$. If $3P_1 \sim 3P_i$ for some $i$, then by the 2-transitivity of the Galois action, $3P_j \sim 3P_k$ for any $j$ and $k$, in particular, $3P_1 \sim 3P_j$ for all $j$. As the linear system $|3P_1|$ is one-dimensional, this shows that there exists a map $f : X \to \mathbb{P}^1$ of degree 3, such that $3P_j$ is a fibre for all $j$. This map is therefore ramified in at least $2g + 2$ points, and with index 3 at these. The genus of $X$ therefore satisfies $2g - 2 \geq -6 + (2g + 2) \times 2 = 4g - 2$ by Riemann-Hurwitz for $f$. This is not possible, so $3P_1 \not\sim 3P_i$ for all $i$. As this exhausts all Weierstrass points, there are no 2 distinct points with $3P \sim 3Q$, and so by \Cref{weier}, $X$ has only finitely many cyclic cubic points. 
\end{proof}

For hyperelliptic curves, this gives the following corollary.
\begin{corollary}
Let $f \in K[x]$ be a polynomial of even degree $2g + 2$, $g \geq 3$, and assume the Galois group is either $S_{2g + 2}$ or $A_{2g + 2}$. If $g = 3$, assume the Bombieri--Lang conjecture. Then the hyperelliptic curve $y^2 = f(x)$ has only finitely many cyclic cubic points.
\end{corollary}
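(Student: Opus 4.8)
The plan is to read the corollary as the hyperelliptic specialisation of \Cref{gal}, so that the proof reduces to checking that curve's two hypotheses for $X : y^2 = f(x)$: that the Jacobian $J_X$ contains no elliptic curves, and that the Galois action on the Weierstrass points of $X$ is $2$-transitive (the Bombieri--Lang assumption for $g = 3$ being carried over verbatim from the statement). I would first dispose of the transitivity condition. Since the Galois group of $f$ acts transitively on its roots, $f$ is separable and $X$ is a smooth hyperelliptic curve of genus $g$; because $\deg f = 2g + 2$ is even, the hyperelliptic map $X \to \mathbb{P}^1$ is branched exactly over the roots of $f$, so the $2g + 2$ Weierstrass points of $X$ are precisely the points $(\alpha, 0)$ with $f(\alpha) = 0$. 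Hence the Galois action on the Weierstrass points is exactly the action of $\mathrm{Gal}(f/K)$ on the roots of $f$, namely $S_{2g+2}$ or $A_{2g+2}$, and both of these are $2$-transitive on $2g + 2 \geq 8$ letters.

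The substantive step is showing that $J_X$ contains no elliptic curve, and I would reduce this to the stronger assertion that $J_X$ is absolutely simple: a geometrically simple abelian variety of dimension $g \geq 3$ has no proper nonzero abelian subvariety at all, in particular no elliptic subvariety, even over $\bar K$. To establish absolute simplicity I would invoke the theory of hyperelliptic Jacobians with large monodromy (Zarhin): when $n = \deg f \geq 5$ and the Galois group of $f$ over $K$ is $S_n$ or $A_n$, one has $\mathrm{End}_{\bar K}(J_X) = \mathbb{Z}$, so $\mathrm{End}^0_{\bar K}(J_X) = \mathbb{Q}$ admits no idempotents other than $0$ and $1$, and Poincaré reducibility then forces $J_X$ to be simple over $\bar K$; here $n = 2g + 2 \geq 8$, so the hypothesis is comfortably met. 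The mechanism behind this input is that, for a suitable prime $\ell$, the Galois representation on $J_X[\ell]$ is, up to an explicit subquotient, the standard (reflection) representation of $\mathrm{Gal}(f/K) \in \{S_n, A_n\}$ on the roots of $f$, which is absolutely irreducible for $n \geq 5$; an abelian subvariety of $J_X$ would cut out a forbidden invariant subspace. I expect this appeal to Zarhin's theorem to be the main — indeed the only — obstacle: a self-contained argument would follow exactly these representation-theoretic lines and would essentially reconstruct his proof.

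With both hypotheses of \Cref{gal} verified, that theorem applies directly and yields that $y^2 = f(x)$ has only finitely many cyclic cubic points, which completes the proof. It is worth recording where $g \geq 3$ is used: it makes $\dim J_X = g > 1$, so that absolute simplicity genuinely precludes an elliptic subvariety, and it is in any case the hypothesis under which \Cref{gal} (and hence this corollary) is stated.
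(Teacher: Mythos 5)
Your proof is correct and follows the same route as the paper: identify the Weierstrass points with the roots of $f$ so that $2$-transitivity is immediate from the Galois group being $S_{2g+2}$ or $A_{2g+2}$, invoke Zarhin's theorem to get (absolute) simplicity of the Jacobian and hence the absence of elliptic subvarieties, and then apply \Cref{gal}. Your write-up merely supplies more detail on the Zarhin step than the paper does.
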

\begin{proof}
The Weierstrass points of a hyperelliptic curve of this form are exactly the roots of $f$. By assumption, the Galois group is $S_{2g + 2}$ or $A_{2g + 2}$ which act 2-transitively. Additionally, by a theorem of Zarhin \cite{Zar}, the Jacobian of the curve is simple, and so does not contain an elliptic curve. The assumptions of the previous theorem are now satisfied.
\end{proof}

We now give an example of a non-hyperelliptic genus 3 curve where we can verify the assumptions of the theorem.
\begin{corollary}
Assuming the Bombieri--Lang conjecture, the modular curve $X_{ns}^+(13)$ has only finitely many points defined over cyclic cubic extensions of $\mathbb{Q}$.
\label{ns}
\end{corollary}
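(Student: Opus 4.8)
The plan is to apply \Cref{mainthm2} to $X = X_{ns}^+(13)$ over $\mathbb{Q}$: the Bombieri--Lang conjecture is assumed and $g(X) = 3$, so it remains to check the two geometric hypotheses of that theorem, namely that the Jacobian $J_X$ contains no elliptic curve and that for every $P \in X(\bar{\mathbb{Q}})$ there are at most $g - 1 = 2$ points $Q \in X(\bar{\mathbb{Q}})$ with $3P \sim 3Q$. (One could instead try to invoke \Cref{gal}, but that would require showing that the Galois action on the $24$ Weierstrass points is $2$-transitive; the two conditions above can be checked more directly, so I would proceed via \Cref{mainthm2}.)

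For the first condition, recall that $X_{ns}^+(13)$ is a smooth plane quartic of genus $3$ whose Jacobian is known to be $\mathbb{Q}$-simple of dimension $3$, carrying real multiplication by the cubic field $\mathbb{Q}(\zeta_7)^+$ (via an exceptional isomorphism, $X_{ns}^+(13)$ is the ``cursed curve'' $X_s^+(13)$, and $J_X$ is isogenous to the abelian threefold attached to the weight-$2$ newform of level $169$ with coefficient field $\mathbb{Q}(\zeta_7)^+$). A simple abelian variety of dimension $3$ contains no elliptic curve, so the first hypothesis holds.

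For the second condition, first observe that if $3P \sim 3Q$ with $P \neq Q$ then $\dim |3P| \geq 1$, hence $h^0(X, 3P) \geq 2$ and $P$ is a Weierstrass point; on a non-hyperelliptic genus-$3$ curve these are precisely the flexes of the canonical plane quartic, of which there are $24$ counted with weight. For a flex $P$, writing the canonical (hyperplane) class as $K_X \sim 3P + R_P$ where $R_P$ is the residual point of the tangent line at $P$, the relation $3P \sim 3Q$ is equivalent to $R_P \sim R_Q$, hence to $R_P = R_Q$ (a curve of positive genus has no two distinct linearly equivalent points of degree $1$). Consequently $\#\{Q : 3P \sim 3Q\}$ equals the number of flexes whose residual point is $R_P$, and it suffices to check that the residual points of the $24$ flexes are pairwise distinct. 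This is a finite computation on an explicit plane quartic model of $X_{ns}^+(13)$: intersect the quartic with its Hessian to obtain the flexes, confirm there are $24$ distinct ones (so no hyperflexes), and compute and compare the residual points, e.g.\ in \texttt{Magma}. Granting that computation, $\#\{Q : 3P \sim 3Q\} < g$ for every $P$ (trivially so when $P$ is not a Weierstrass point), and \Cref{mainthm2} then gives the conclusion. The main obstacle is precisely this flex/residual-point verification --- fixing a correct model and confirming distinctness over $\bar{\mathbb{Q}}$ --- whereas the remaining ingredients are structural facts about the cursed curve and its Jacobian.
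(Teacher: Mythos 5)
Your route is sound and genuinely different from the paper's, which instead invokes \Cref{gal}: the paper computes the minimal polynomial of the $y$-coordinates of the $24$ flexes of an explicit plane quartic model and checks its Galois group is $S_{24}$, whence $2$-transitivity forces (via the Riemann--Hurwitz argument in \Cref{gal}) that no two distinct Weierstrass points satisfy $3P \sim 3Q$; combined with simplicity of the Jacobian (cited from the LMFDB) this gives the conclusion. You apply \Cref{mainthm2} directly and reduce the linear-equivalence hypothesis to the statement that the residual points $R_P$ of the flex tangent lines are pairwise distinct; your reduction ($3P \sim 3Q \Leftrightarrow R_P = R_Q$ via $K_X \sim 3P + R_P$) is correct, and the asserted outcome of your computation is in fact equivalent, by that same reduction, to the ``no two flexes with $3P \sim 3Q$'' conclusion the paper extracts from $2$-transitivity. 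The trade-off: your criterion is geometrically cleaner and checks exactly what \Cref{mainthm2} needs (indeed you only need that no residual point is shared by three flexes, which is weaker than pairwise distinctness), but it requires locating the flexes and comparing residual points over $\bar{\mathbb{Q}}$, whereas the paper only needs the Galois group of a single degree-$24$ polynomial --- a more standard, easily certified computation --- and gets the stronger conclusion for free. Two small caveats: you leave the decisive computation unperformed, so as written the argument is conditional on it (the paper's $S_{24}$ computation does confirm it would succeed); and the real-multiplication field of $J_{ns}^+(13)$ is the cyclic cubic field of conductor $13$ attached to the relevant level-$169$ newform, not $\mathbb{Q}(\zeta_7)^+$, though this parenthetical slip does not affect the argument since all you use is that the Jacobian is (geometrically) simple of dimension $3$.
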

\begin{proof}
The curve has the following model in $\mathbb{A}^2$, $xy^3 + x^2y^2 + y^3 + 2xy^2 - x^3 + 2xy + 2x - y = 0$. This curve is genus 3 and has simple Jacobian \cite{LMFDB}. For a plane quartic, the Weierstrass points are exactly the flex points and these can be computed as the vanishing of the Hessian determinant of the defining equation. The intersection can be computed in \texttt{Sage} \cite{sag} using Gr\"obner bases, which gives the minimal polynomial for the $y$-coordinates. This polynomial has Galois group $S_{24}$ (computed by \texttt{Magma} \cite{mag}). As $S_{24}$ acts 2-transitively on the 24 flexes, this curve satisfies the assumptions of the theorem.
\end{proof}

Instead of imposing geometric conditions, it can also be useful to, instead, restrict the arithmetic of the curve.

\begin{theorem}
Let $X$ be a curve of genus $g \geq 2$, and suppose that the Jacobian has rank 0, i.e.\ $J_X(K)$ is finite. Then $X$ has infinitely many cyclic cubic points if and only if there exists a map $f : X \to \mathbb{P}^1$ of degree 3 and such that $\mathbb{P}^1_{\Delta(f)}$ is either a $\mathbb{P}^1$ or a positive rank elliptic curve.
\end{theorem}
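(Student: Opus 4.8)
The plan is to deduce the forward implication from \Cref{finmap} and to note that the backward implication is already \Cref{mapinf}. So assume $X$ has infinitely many cyclic cubic points, equivalently that $(X^3/C_3)(K)$ is infinite, and consider the composite $X^3/C_3 \to \Sym^3 X \to W^{(3)}_X$, where the second map is the Abel--Jacobi map attached to a fixed degree $3$ divisor. Since $W^{(3)}_X$ is a closed subvariety of $J_X$ and the rank $0$ hypothesis gives that $J_X(K)$ is finite, the set $W^{(3)}_X(K)$ is finite. Hence the infinitely many rational points of $X^3/C_3$ all land in this finite set, so the map $(X^3/C_3)(K) \to W^{(3)}_X(K)$ fails to be finite-to-one. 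Applying \Cref{finmap} produces a degree $3$ morphism $f : X \to \mathbb{P}^1$ whose discriminant curve $\mathbb{P}^1_{\Delta(f)}$ is either a $\mathbb{P}^1$ or a positive rank elliptic curve, which is exactly what is wanted.

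The structure here is the rank $0$ specialisation of the argument used for genus $g \geq 3$: in that setting the finiteness of the image of $(X^3/C_3)(K)$ in $W^{(3)}_X(K)$ had to be forced using \Cref{abrhar} together with a hypothesis forbidding elliptic subvarieties of $J_X$ (and, for $g = 3$, Bombieri--Lang), whereas now it is immediate from Mordell--Weil, which is why the argument descends all the way to $g = 2$ with no auxiliary conjectures. One minor point to record is the genus $2$ case: there $\dim \Sym^3 X = 3 > 2 = \dim J_X$, so $W^{(3)}_X = J_X$, but this changes nothing since $J_X(K)$ is still finite. If one wants the stronger ``all but finitely many cyclic cubic points come from such morphisms'' statement, the same argument applies after deleting, for each already-known morphism of the stated type, the (finitely many, by the finiteness of $W^{(3)}_X(K)$ in each fibre direction) rational points of $X^3/C_3$ it accounts for; the statement as phrased does not require this.

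I do not expect a genuine obstacle in this proof: all the real work has been front-loaded into \Cref{finmap} (which in turn rests on Faltings and Clifford-type dimension counts) and into \Cref{mapinf}. The only thing to be careful about is to invoke \Cref{finmap} with the correct hypothesis, namely non-finiteness of $(X^3/C_3)(K) \to W^{(3)}_X(K)$ rather than of $\Sym^3 X(K) \to W^{(3)}_X(K)$; since $(X^3/C_3)(K) \to \Sym^3 X(K)$ is finite-to-one, the former is what we have verified and it is precisely the input \Cref{finmap} needs. Combined afterwards with \Cref{weier} (and, in the genus $2$ case, \Cref{gen2weier}), this yields the corresponding finiteness statement \Cref{mainthm3}.
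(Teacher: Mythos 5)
Your proof is correct and is essentially the same as the paper's: finiteness of $J_X(K)$ gives finiteness of $W^{(3)}_X(K)$, so infinitely many points of $(X^3/C_3)(K)$ collapse onto a single fibre, \Cref{finmap} then supplies the morphism, and \Cref{mapinf} gives the converse.
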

\begin{proof}
Since $J_X(K)$ is finite, the same is true of $W^{(3)}_X(K)$. If $(X^3 / C_3)(K)$ is infinite, then infinitely many points are mapped to the same point in $W^{(3)}_X(K)$. Theorem~\ref{finmap} gives the forward implication. Theorem~\ref{mapinf} gives the converse.
\end{proof}

Combining this with Proposition~\ref{weier}, gives \Cref{mainthm3}. 

One application of this is Theorem 5 in \cite{Raw}, and we give another application here in genus 2.
\begin{theorem}
There are finitely many elliptic curves defined over cyclic cubic number fields with a 22-isogeny.
\end{theorem}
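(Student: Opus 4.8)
The plan is to identify elliptic curves over cyclic cubic fields with a $22$-isogeny with non-cuspidal points of the modular curve $X_0(22)$, and then to apply \Cref{mainthm3}. Recall that $X_0(22)$ is a curve of genus $2$ over $\mathbb{Q}$; since $22$ is squarefree, every degree-$22$ isogeny is automatically cyclic, so for a number field $L$ the datum of an elliptic curve $E/L$ together with an $L$-rational $22$-isogeny is, up to quadratic twist, the same as a non-cuspidal $L$-point of $X_0(22)$, and conversely such a point gives such an $E$. It is classical (Kenku) that the only rational points of $X_0(22)$ are its four cusps; in particular there is no elliptic curve over $\mathbb{Q}$ with a $22$-isogeny. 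It therefore suffices to show that $X_0(22)$ has only finitely many points defined over cyclic cubic extensions of $\mathbb{Q}$, which is exactly the conclusion of \Cref{mainthm3} applied with $X = X_0(22)$, $K = \mathbb{Q}$, $g = 2$, provided its three hypotheses can be verified.

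First, $J_0(22)$ has rank $0$ over $\mathbb{Q}$: the space $S_2(\Gamma_0(22))$ is entirely old, generated through the two degeneracy maps by the unique newform of level $11$, so $J_0(22)$ is $\mathbb{Q}$-isogenous to $E^2$, where $E$ is the elliptic curve of conductor $11$ (Cremona label \texttt{11a1}); since $E(\mathbb{Q}) = \mathbb{Z}/5\mathbb{Z}$ has rank $0$, so does $J_0(22)(\mathbb{Q})$. Second, $X_0(22)$ has no automorphism of order $3$: its automorphism group is generated by the hyperelliptic involution and the Atkin--Lehner involutions $w_2$ and $w_{11}$, hence is a $2$-group, as one checks in \texttt{Magma} \cite{mag}. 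Third, $J_0(22)$ has no $\mathbb{Q}$-rational $3$-isogeny. Here I would argue that the comparison isogeny $J_0(22) \sim E^2$ coming from the degeneracy maps has degree prime to $3$ (it is governed by the Hecke eigenvalue $a_2(E) = -2$), whence $J_0(22)[3] \cong E[3]^{\oplus 2}$ as $G_{\mathbb{Q}}$-modules; a $G_{\mathbb{Q}}$-stable line in $E[3]^{\oplus 2}$ projects nontrivially to at least one of the two factors, and there surjects onto a $G_{\mathbb{Q}}$-stable line in $E[3]$ — that is, onto a rational $3$-isogeny of $E$, which does not exist because the isogeny class of \texttt{11a1} is linked only by $5$- and $25$-isogenies. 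Alternatively, from an explicit hyperelliptic model of $X_0(22)$ one can check directly, using the methods of \cite{DokDor}, that $J_0(22)$ has no $\mathbb{Q}$-rational subgroup of order $3$.

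With the three hypotheses established, \Cref{mainthm3} shows that $X_0(22)$ has only finitely many points over cyclic cubic fields; each non-cuspidal such point gives, up to twist, one elliptic curve over a cyclic cubic field with a $22$-isogeny, and since $X_0(22)$ has no non-cuspidal rational point these account for all such curves, which proves the statement. I expect the exclusion of a rational $3$-isogeny on $J_0(22)$ to be the step requiring the most care: the conceptual argument depends on identifying $J_0(22)[3]$ with $E[3]^{\oplus 2}$ as Galois modules and on the known isogeny class of \texttt{11a1}, while the alternative depends on an explicit computation with the genus-$2$ Jacobian; by contrast, the rank and automorphism conditions are essentially immediate from the structure of $X_0(22)$.
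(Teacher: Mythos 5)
Your proof is correct and follows the same overall strategy as the paper: reduce to showing $X_0(22)$ has finitely many cyclic cubic points and verify the three genus-$2$ hypotheses of \Cref{mainthm3} (equivalently, the rank-$0$ theorem combined with \Cref{gen2weier}) --- rank $0$ of $J_0(22)$ via the level-$11$ newform, no order-$3$ automorphism via the Atkin--Lehner description of $\mathrm{Aut}(X_0(22))$, and no rational $3$-isogeny of $J_0(22)$. The one genuinely different step is the last: the paper establishes the absence of a rational $3$-isogeny computationally, using the Dokchitser--Doris equations on an explicit model to find that the projective $3$-torsion lives in quartic fields and a degree-$24$ field, whereas you argue conceptually that the degeneracy-map isogeny $E^2 \to J_0(22)$ (with $E = \texttt{11a1}$) has degree a power of $5$ (determinant $(p+1)^2 - a_p^2 = 9 - 4 = 5$ at $p=2$), so $J_0(22)[3] \cong E[3]^{\oplus 2}$ as Galois modules, and a Galois-stable line there would project to a Galois-stable line in the irreducible module $E[3]$. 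Your argument is cleaner and avoids the explicit genus-$2$ computation, at the cost of invoking the structure of the old subvariety and the surjectivity of the mod-$3$ representation of \texttt{11a1}; the paper's computation is more self-contained and matches its general emphasis on effectively checkable criteria. (Both your write-up and the paper's gloss over the fact that a non-cuspidal point of $X_0(22)$ determines an elliptic curve only up to quadratic twist, so ``finitely many elliptic curves'' should be read as finitely many $j$-invariants, or finitely many up to twist; this is a matter of phrasing in the theorem statement rather than a gap in your argument.)
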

\begin{proof}
The modular curve $X_0(22)$ has genus 2, and as the level is square-free by a theorem of Ogg \cite{Ogg}, the automorphism group is generated by the Atkin-Lehner involutions, which commute and have order 2. In particular, this precludes any automorphisms of order 3. Using the equations of Dokchitser-Doris \cite{DokDor} with an explicit model of $X_0(22)$ (e.g.\ from \texttt{Magma}'s \texttt{SmallModularCurve} database \cite{mag}), the projective 3-torsion (equivalently, the cyclic 3-isogenies) is found to be defined over 4 quartic fields and a degree 24 number field. Finally, as the cusp forms of level 22 all have analytic rank 0, the Jacobian of $X_0(22)$ has rank 0 over $\mathbb{Q}$. Combining the preceding theorem and \Cref{gen2weier} shows that $X_0(22)$ has only finitely many cyclic cubic points over $\mathbb{Q}$. 
\end{proof}

We close the section by remarking that it is not possible to extend these results, in this form, to any genus 3 curve. 
\begin{example}
Let $f(x)$ be a degree 12 polynomial where $A_4$ acts transitively on the roots via $\mathrm{PGL}_2(\mathbb{C})$. Here, we use $7(8x^3 + 1)(x^3 + 1)(x^2 - 5x + 7)(x^2 + x + 7)(x^2 + 4x + 7)$, where the action of $A_4$ is generated by $\sigma = x \mapsto \omega x$ with $\omega^3 = 1$, and $\tau = x \mapsto \frac{x + 2}{x - 1}$. These extend to automorphisms of the curve $Y : y^2 = f(x)$. Taking the quotient of $Y$ by $\sigma$ gives a map to the elliptic curve $E : y^2 = x^3 - 672x + 6840$ (which has rank 1 over $\mathbb{Q}$). Instead, quotienting by $\tau$ composed with the hyperelliptic involution gives the following genus 3 curve. 
$$X : y^2 = 14(2x - 1)(x + 6)(4x^2 - 6x + 9)(x^2 + 2x + 4)(x^2 + 6x - 3)$$
Pulling back rational points on $E$ to $Y$, shows $Y$ has infinitely many cyclic cubic points (over $\mathbb{Q}(\omega)$). Pushing them forward to $X$ shows the same is true for $X$. We now show there is no degree 3 map from $X$ to $\mathbb{P}^1$ or $E'$ for any $E'$ isogenous to $E$. As the genus of $X$ is 3, and it is hyperelliptic, it has no degree 3 morphisms to $\mathbb{P}^1$ by the Castelnuovo--Severi inequality. The curve $X$ has an involution given by $x \mapsto \frac{-1}{3x}$, and quotienting out by this automorphism gives $E$. If $X$ had both a degree 2 and degree 3 map to $E$, then by considering the graph of these maps in $E \times E$, this would show the Jacobian of $X$ is isogenous to $E^2 \times F$ where $F$ is another elliptic curve. Combining the previously described automorphism of $X$ with the hyperelliptic involution gives a 3rd, and the quotient by this is a genus 2 curve with automorphism group $D_6$. As the automorphism group is non-abelian, the Jacobian of this curve must be geometrically isogenous to a square. This leaves 2 options; either the Jacobian of $X$ is geometrically isogenous to the cube of $E$ or to $E \times F^2$ with $E$ and $F$ not geometrically isogenous. By computing the geometric endomorphism ring in \texttt{Magma} \cite{mag}, it must be the latter. This rules out $X$ having a degree 3 map to $E$ (or an isogenous elliptic curve).
\end{example}

\section{Integral points}
Let $\mathcal{X}$ be an $O_{K, S}$-model for a proper open subset of $X$, where $S$ is a finite set of places of $K$. Define $D$ as the complement of the (generic fibre) of $\mathcal{X}$ in $X$, and since $X$ is a proper subset, we have $\# D \geq 1$.

For integral points, the situation is much simpler than in \Cref{abrhar}, as the following two theorems of Levin, \cite{levin}, classify when a curve has infinitely many integral cubic points, and their origin. The first statement, \cite[Theorem 1.2, Theorem 4.3]{levin}, is akin to \cite[Theorem 1 and Conjecture]{AbrHar}, although the theorem of Levin provides more control.

\begin{theorem}
 Assume $\mathcal{X}$ has infinitely many $\mathcal{O}_{L, S}$-points with $[L : K] \leq d$, for an integer $d$. Then the following hold.
 \begin{enumerate}
  \item After base-changing to $\bar{K}$, there exists a morphism $f : X \to \mathbb{P}^1$ of degree $\leq d$ such that $D \leq f^*(0) + f^*(\infty)$.
  \item If the points of $D$ are elements of $X(K)$, and $\# D > d$, then the map in the previous case is defined over $K$. Moreover, all but finitely many of the $\mathcal{O}_{L, S}$ arise as pre-images of rational points under $K$-rational maps $f : X \to \mathbb{P}^1$ of degree at most $d$ with $D \leq f^*(0) + f^*(\infty)$. 
 \end{enumerate}
 The converse is also true: if there exists $f : X \to \mathbb{P}^1$ of degree $\leq d$ such that $D \leq f^*(0) + f^*(\infty)$, then after enlarging $S$, there exist infinitely many $\mathcal{O}_{L, S}$-points on $\mathcal{X}$ with $[L : K] \leq d$.
 \label{lev_rat}
\end{theorem}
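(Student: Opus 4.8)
The plan is to reduce the statement to the structure theorems for integral points of Levin \cite{levin}, using symmetric powers in the role played by the Abel--Jacobi map in the rational-point case. Write $U = X \setminus D$ for the generic fibre of $\mathcal{X}$. An $\mathcal{O}_{L,S}$-point of $\mathcal{X}$ with $[L:K] = e$ has, as its Galois orbit, an effective $K$-rational divisor of degree $e$ disjoint from $D$, and hence --- after spreading the model out over $\mathcal{O}_{K,S}$ --- an $S$-integral point of $\Sym^e \mathcal{X}$ relative to the boundary it inherits from $D$. Since $e$ ranges over the finite set $\{1, \dots, d\}$, an infinite family of $\mathcal{O}_{L,S}$-points of degree $\le d$ produces, for some fixed $e \le d$, infinitely many $S$-integral points on $\Sym^e \mathcal{X}$. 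It therefore suffices to understand when $\Sym^e \mathcal{X}$ has infinitely many integral points.

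For that I would invoke the analogue of the Faltings--Vojta finiteness theorems for integral points on curves and on semiabelian varieties: under the generalized Abel--Jacobi map $\Sym^e U \to J_{X,D}$ into the generalized Jacobian (a semiabelian variety), an infinite set of $S$-integral points of $\Sym^e \mathcal{X}$ either fails to be Zariski dense, in which case one descends to a proper subvariety and iterates, or accumulates on a translate of a semiabelian subvariety, which --- as $X$ has genus $\ge 2$ --- forces the degree-$e$ divisors to move in a positive-dimensional family. Once they move, the argument proceeds as in the proof of \Cref{finmap}: the family is a base-point-free pencil (the linear system is one-dimensional since $g(X) \ge 2$), hence a morphism $f : X \to \mathbb{P}^1$ of degree at most $e \le d$, and tracking the $S$-integrality of the members through the construction yields $D \le f^*(0) + f^*(\infty)$, i.e.\ $f$ restricts to a map $U \to \mathbb{G}_m$. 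This gives part (1), over $\bar{K}$ as stated. \textbf{The main obstacle is exactly this structure step}: it is \cite[Theorem 4.3]{levin}, whose proof rests on the Schmidt subspace theorem in the form developed by Corvaja--Zannier and Levin for integral points on symmetric products and semiabelian varieties, together with the delicate passage from ``infinitely many integral points'' to ``the divisors move in a pencil''. I would take this as a black box.

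For part (2), suppose the points of $D$ lie in $X(K)$ and $\# D > d$. Up to the automorphisms of $\mathbb{P}^1$ fixing $\{0, \infty\}$ there are only finitely many morphisms $f : X \to \mathbb{P}^1$ of degree $\le d$ with $D \le f^*(0) + f^*(\infty)$: finiteness of degree-$\le d$ pencils follows from the Castelnuovo--Severi inequality (\Cref{csineq}), and the $\#D > d$ rational points of $D$, distributed over the fibres over $0$ and $\infty$, rigidify the remaining normalization. By part (1) and pigeonholing over this finite set, one such $\bar{K}$-morphism $f_0$ has infinitely many of our degree-$e$ integral divisors among its fibres; since two distinct pencils of degree $\le d$ share only finitely many members, $f_0$ is stable under $\mathrm{Gal}(\bar{K} / K)$, hence descends to a $K$-rational morphism, and we fix the coordinates $0, \infty$ on the target using rational points of $D$. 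The ``all but finitely many'' clause follows by deleting the contributions of each of the finitely many such $K$-rational maps from the integral-point set and re-running part (1), which must now terminate in a finite set.

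The converse is the easy direction. Given $f : X \to \mathbb{P}^1$ of degree $\le d$ with $D \le f^*(0) + f^*(\infty)$, for each $S$-unit $t \in \mathcal{O}_{K,S}^\times$ the fibre $f^{-1}(t)$ is a closed point lying in $U$ of residue degree $\le d$; by the Chevalley--Weil theorem the primes at which such points fail to be $S$-integral lie in a single finite set depending only on the ramification of $f$, so after enlarging $S$ once to absorb it, each $f^{-1}(t)$ is an $\mathcal{O}_{L,S}$-point with $[L:K] \le d$, and Dirichlet's unit theorem supplies infinitely many distinct such points.
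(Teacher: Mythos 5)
This statement is a quoted result of Levin --- the paper offers no proof of its own, only the citation to \cite[Theorems 1.2 and 4.3]{levin} --- and your proposal ultimately does the same thing, since the ``structure step'' you black-box is precisely Levin's Theorem 4.3, i.e.\ the substantive content of the statement being proved. The scaffolding you add around it (passage to $\Sym^e \mathcal{X}$, the pigeonhole-and-Galois-descent argument for part (2) using that two distinct base-point-free pencils share at most finitely many members, and the $S$-unit/spreading-out argument for the converse) is correct and consistent with how Levin's argument is organised, so the proposal matches the paper's treatment, with the caveat that all the real difficulty remains inside the cited black box.
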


It is also possible to reach similar conclusions with much weaker assumptions on $D$, although the conclusions are weaker. For example, we state here \cite[Theorem 5.5]{levin}.
\begin{theorem}
 All but finitely many $\mathcal{O}_{L, S}$-points on $\mathcal{X}$, as $L$ varies over the fields with $[L : K] \leq d$ arise from fibres of maps $f : X \to \mathbb{P}^1$, with $\deg f \leq d$.
 \label{lev_gen}
\end{theorem}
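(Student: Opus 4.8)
\emph{Proof proposal.} The plan is to reduce the statement to the structure theory of integral points on semiabelian varieties, running an argument parallel to the one behind \Cref{abrhar} but in the integral setting. It suffices to fix an integer $e \le d$ and treat the $\mathcal{O}_{L,S}$-points with $[L:K] = e$ separately, there being finitely many such $e$. Given such a point $P$, its Galois orbit over $K$ is an effective divisor $D_P$ of degree $e$ on $X$, defined over $K$ and disjoint from the boundary divisor $D$, since integrality forces $P$ and each of its conjugates to reduce away from $D$ outside $S$. This produces $K$-rational points of $\Sym^e X$ lying in the open set $U_e$ parametrising degree-$e$ divisors disjoint from $D$, and the hypothesis is that this set is infinite.

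Next I would map $U_e$ to the generalized Jacobian of the pair $(X, D)$: let $J = \mathrm{Pic}^0(X, D)$, the semiabelian variety that is an extension of $J_X$ by a torus of rank $\#D - 1$, and send $D_P$ to its class relative to a fixed base divisor. The image $W$ of $U_e$ is a quasi-projective subvariety of the semiabelian variety $J$, and the points above are $S$-integral points of $W$. By Vojta's theorem on integral points on subvarieties of semiabelian varieties (which generalises the theorems of Siegel and Faltings), all but finitely many of them lie in a finite union of translates $a_i + B_i \subseteq W$ of semiabelian subvarieties $B_i \subseteq J$; since the set of points is infinite, some $B_i$ has positive dimension.

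It remains to turn a positive-dimensional translate $a_i + B_i \subseteq W$ into a morphism $X \to \mathbb{P}^1$ of degree at most $e$ whose fibres account for the corresponding $D_P$. Its preimage in $U_e$ is fibred by complete linear systems $|D_P|$ over the points of $a_i + B_i$. If one of these is positive-dimensional, then after removing its base locus (a fixed $K$-rational divisor, which by $g \geq 2$ contributes at most a fixed finite set of exceptional $P$) we obtain a base-point-free pencil and the desired map $f$. Otherwise the preimage is a positive-dimensional family of distinct divisors mapping onto $a_i + B_i$, and feeding this into the dimension and Clifford-type estimates for $|nD_P|$ used in the proof of \Cref{abrhar} --- now with the extra torus directions supplied by $D$ --- again produces $f : X \to \mathbb{P}^1$ of degree $\le e \le d$ with the $D_P$ among its fibres. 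Carrying the boundary $D$ through these fibres would, under the stronger hypotheses of \Cref{lev_rat}, upgrade this to $D \le f^*(0) + f^*(\infty)$, but that is not needed here.

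The step I expect to be the main obstacle is this last one: guaranteeing that a positive-dimensional translate of a semiabelian subvariety inside $W$ genuinely forces a low-degree pencil on $X$, rather than merely an unstructured positive-dimensional family of pairwise non-equivalent divisors. This is exactly where the linear-series estimates behind \Cref{abrhar} must be re-established in the presence of the torus part of $J$. The remaining bookkeeping --- that ``all but finitely many'' survives the passage from $\mathcal{O}_{L,S}$-points of $\mathcal{X}$ to $K$-points of $\Sym^e X$ and then to $S$-integral points of $W$ --- is routine but needs care when $\mathcal{X}$ has few or no $S$-integral $K$-points.
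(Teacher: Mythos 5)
This statement is not proved in the paper at all: it is quoted verbatim from \cite[Theorem 5.5]{levin} and used as a black box, so the only thing to compare your proposal against is Levin's own argument. Your road map --- pass to Galois-orbit divisors in $\Sym^e X$ avoiding the boundary $D$, map them to the generalized Jacobian of $(X,D)$, invoke Vojta's theorem on integral points on subvarieties of semiabelian varieties, and then convert positive-dimensional translates into low-degree pencils --- is indeed the strategy of the cited source, so you have identified the correct framework.

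As a proof, however, the proposal has a genuine gap, and it sits exactly where you flag it, but it is worth saying precisely why it is not a routine re-run of the estimates behind \Cref{abrhar}. In the rational-point analogue, a positive-dimensional abelian translate inside $W^{(3)}_X$ does \emph{not} in general produce a pencil: the conclusion of \Cref{abrhar} is a degree-$\le 3$ map $X \to E$ to an elliptic curve whose fibres are the divisors in question, and such families genuinely occur with no map to $\mathbb{P}^1$ in sight. So ``feeding the translate into the Clifford-type estimates'' cannot by itself yield the stated conclusion, which mentions only $\mathbb{P}^1$. You must additionally explain why the abelian contributions vanish in the integral setting --- e.g.\ because the corresponding fibres of $X \to E$ avoid $D$, so their images are $S$-integral points of $E$ minus the nonempty image of $D$, finite by Siegel's theorem; or, equivalently, because only translates of subtori of $\operatorname{Pic}^0(X,D)$ can carry infinitely many integral points, and it is the toric directions that pull back to pencils. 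None of this appears in the sketch. Two further points are asserted rather than proved: (i) that the classes of the orbit divisors are $S$-integral points of $W$ with respect to a model of the generalized Jacobian (integrality must be shown to be preserved under the Abel--Jacobi map of the pair $(X,D)$ --- a real lemma, not bookkeeping); and (ii) the fibre of $\Sym^e X \to \operatorname{Pic}(X,D)$ through $D_P$ is only an affine subspace of $|D_P|$ (divisors differing by $\operatorname{div}(g)$ with $g \equiv 1$ along $D$), so even in your ``easy'' case one must check that a positive-dimensional such fibre yields a base-point-free pencil of degree $\le e$ having the $D_P$ among its fibres. In short: right approach, but the decisive steps of Levin's argument are missing.
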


We now apply this result to cyclic cubic points. 
\begin{theorem}
Let the genus of $X$ be at least 3, and assume $\mathcal{X}$ has infinitely many integral cyclic cubic points, then there exists a map $f : X \to \mathbb{P}^1$ of degree 3 such that $\#f(D) \leq 2$, and the discriminant curve $\mathbb{P}^1_{\Delta(f)}$ is a $\mathbb{P}^1$.
\label{int_geq_3}
\end{theorem}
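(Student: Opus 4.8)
The plan is to combine Levin's integral-points classification (\Cref{lev_rat}) with the specialisation argument already used in \Cref{mapinf}. Suppose $\mathcal{X}$ has infinitely many integral cyclic cubic points. Since a cyclic cubic point generates a degree-$3$ extension $L/K$, these are in particular $\mathcal{O}_{L,S}$-points with $[L:K] \leq 3$, so \Cref{lev_rat}(1) applies (possibly after base change to $\bar K$): there is a degree-$\leq 3$ morphism $f : X \to \mathbb{P}^1$ with $D \leq f^*(0) + f^*(\infty)$. The first task is to upgrade this to a \emph{degree exactly $3$} morphism \emph{defined over $K$}. The degree cannot be $1$ since $g \geq 3$; if it were $2$, then all but finitely many of the integral cyclic cubic points would be fibres of this quadratic map and hence \emph{not} cubic points at all (a fibre of a degree-$2$ map is either two $K$-points or a single quadratic point), so only finitely many genuinely cubic points could arise that way — this contradicts the hypothesis once we know (via \Cref{lev_gen}, or by the same fibre-counting) that all but finitely many integral cubic points come from fibres of degree-$\leq 3$ maps. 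Hence $\deg f = 3$. For descent to $K$: the Galois conjugates of $f$ are again degree-$3$ maps to $\mathbb{P}^1$ whose fibres contain infinitely many of our points; since a genus-$\geq 3$ curve has only finitely many degree-$3$ maps to $\mathbb{P}^1$ up to $\mathrm{PGL}_2$, one such map recurs infinitely often, and standard descent (or the uniqueness/finiteness of such maps together with Galois-stability of the set of infinitely-recurring fibres) makes it $K$-rational.

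Once $f : X \to \mathbb{P}^1$ is a $K$-rational degree-$3$ map with $D \leq f^*(0)+f^*(\infty)$, the condition $\#f(D) \leq 2$ is immediate: every point of $D$ maps into $\{0, \infty\}$, which has two elements. What remains is to show $\mathbb{P}^1_{\Delta(f)}$ is a $\mathbb{P}^1$ (and in particular \emph{not} a positive-rank elliptic curve — this is the strengthening over the projective case, reflecting that integral points are scarcer than rational points). Here I would run the discriminant specialisation argument of \Cref{mapinf}: pick a coordinate $x$ on $X$ realising the function-field extension $K(X)/K(\mathbb{P}^1)$ coming from $f$; for each integral cyclic cubic point $P$, the minimal polynomial of $x(P)$ has square discriminant in $K$ (cyclicity), so the discriminant of the minimal polynomial of $x$ over $K(t)$ becomes a square when specialised at $f(P)$. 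Thus $f(P) \in Y_{\Delta(f)}(K)$ for infinitely many $P$, forcing $\mathbb{P}^1_{\Delta(f)}(K)$ infinite, so $\mathbb{P}^1_{\Delta(f)}$ is $\mathbb{P}^1$ or a positive-rank elliptic curve. To exclude the elliptic case, I would note that the integral points $P$ are $S$-integral with respect to $D$, and $f$ maps $D$ into $\{0,\infty\}$; so $f(P)$ avoids $\{0,\infty\}$ except at finitely many places, i.e. $f(P)$ is an $S$-unit (up to finitely many places). The preimages in $Y_{\Delta(f)}(K)$ of infinitely many such $S$-units form an infinite set of $S$-integral points on $Y_{\Delta(f)}$ relative to the pullback of $\{0,\infty\}$, which has at least two geometric points; by Siegel's theorem an affine curve with $\geq 2$ points at infinity and infinitely many integral points must be $\mathbb{P}^1$ (genus $0$ with at most $2$ punctures). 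Hence $\mathbb{P}^1_{\Delta(f)}$ has genus $0$, i.e. is a $\mathbb{P}^1$.

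The main obstacle I anticipate is the bookkeeping in the descent-to-$K$ step: Levin's theorem gives the map only over $\bar K$ when the points of $D$ are not assumed $K$-rational (which is why \Cref{lev_rat}(2) is not directly invoked), and one must genuinely argue that among the finitely many $\bar K$-conjugates of $f$ one is fixed by $\mathrm{Gal}(\bar K/K)$, using that the subset of fibres carrying infinitely many of the (Galois-stable) set of integral cyclic cubic points is Galois-stable and, by Castelnuovo--Severi/Kani-type finiteness for genus $\geq 3$, pins down a unique map up to the target's automorphisms — after which a Weil-descent or a direct coordinate argument produces the $K$-model. A secondary subtlety is ensuring that the finitely many "bad" fibres (those meeting $X(K)$, or lying over the branch locus, or over the finitely many places where integrality fails) are genuinely negligible, so that infinitely many honest cyclic cubic fibres survive to feed both the degree-$3$ conclusion and the Siegel argument on the discriminant curve.
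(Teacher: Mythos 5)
Your overall strategy runs parallel to the paper's (Levin's classification produces the degree-$3$ map; Siegel's theorem on the discriminant curve excludes the positive-rank elliptic case), but two steps do not go through as written. First, the descent. By starting from \Cref{lev_rat}(1) you only obtain $f$ over $\bar K$, and you rightly flag the descent to $K$ as the main obstacle, but your sketch does not close it: the claim that a genus-$\geq 3$ curve has finitely many degree-$3$ maps to $\mathbb{P}^1$ up to $\mathrm{PGL}_2$ over $\bar K$ is false for $g = 3$ (a non-hyperelliptic genus-$3$ curve carries the positive-dimensional family of $g^1_3$'s $|K_X - P|$, $P \in X(\bar K)$), so "one map recurs infinitely often and is Galois-stable" is not available in the stated range; and even where a Galois-stable $g^1_3$ exists, it a priori descends only to a morphism $X \to B$ with $B$ a genus-$0$ curve that must still be shown to be $\mathbb{P}^1$. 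The paper sidesteps all of this by invoking \Cref{lev_gen} instead, which already yields $K$-rational morphisms $f : X \to \mathbb{P}^1$ through whose fibres all but finitely many integral cubic points pass, and then runs the whole argument on the contracted $\mathbb{P}^1$'s inside $\Sym^3 X$ and their preimages in $X^3/C_3$.

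Second, the $S$-unit claim is false as stated: from $D \leq f^*(0)+f^*(\infty)$ you cannot conclude that $f(P)$ is an $S$-unit for an integral point $P$ of $X \setminus D$, because $P$ may reduce modulo $v$ to a point of $f^{-1}(0) \cup f^{-1}(\infty)$ that is \emph{not} in $D$, making $f(P) \equiv 0$ or $\infty \pmod v$ with no violation of integrality. What is true --- and what the paper's formulation in $\Sym^3 \mathcal{X}$ gives automatically --- is that the whole Galois orbit $f^*(f(P)) = P + P^\sigma + P^{\sigma^2}$ is integral with respect to $D$, whence $f(P)$ is integral with respect to $f(D)$: if $f(P) \equiv s \pmod v$ with $s \in f(D)$, then (for $v$ outside a finite set) the divisors $f^*(f(P))$ and $f^*(s)$ agree mod $v$, and the latter contains a point of $D$. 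Since $f(D)$ is nonempty, this punctures the base $\mathbb{P}^1$ in at least one point, its preimage punctures the discriminant curve in at least one point, and Siegel then rules out genus $1$. With the map taken from \Cref{lev_gen} and the $S$-unit step replaced by this orbit-level integrality, your argument becomes essentially the paper's.
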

\begin{proof}
We first consider the image of $(\mathcal{X}^3 / C_3)(\mathcal{O}_{K, S})$ in $\Sym^3 \mathcal{X}(\mathcal{O}_{K, S})$. By \Cref{lev_gen}, all but finitely many elements of $\Sym^3 \mathcal{X}(\mathcal{O}_{K, S})$ are contained in $\mathbb{P}^1$s inside $\Sym^3 X$. As in the case of rational points, the morphisms from $X$ corresponding to these $\mathbb{P}^1$s must have degree exactly 3, otherwise the fibres will not actually be defined over a degree 3 field. For curves of genus at least 3, there are finitely many such morphisms over $K$. Each $\mathbb{P}^1$ will contain finitely many $\mathcal{O}_{K, S}$-points unless it is punctured in less than 3 places (by Siegel's Theorem), and the number of punctures is the size of $f(D)$, where $f$ is the corresponding morphism from $X$. 

The $\mathcal{O}_{K, S}$-points of $\mathcal{X}^3 / C_3$ must therefore be contained (with finitely many exceptions) in the pre-image of these rational curves in $\Sym^3 \mathcal{X}$. These pre-images are the discriminant curves of the corresponding morphisms (unless the morphism is Galois, when it is two copies of the discriminant curve, which is $\mathbb{P}^1$). For one of these discriminant curves to have infinitely many integral points, it must be a $\mathbb{P}^1$ as it has at least one puncture.
\end{proof}

Proposition~\ref{weier} can be modified to incorporate this stricter genus bound on the discriminant, to show there are $g + 1$ points satisfying the linear equivalence condition.

\begin{theorem}
Now assume $X$ has genus 2, $\mathcal{X}$ has infinitely many integral cyclic cubic points, and $D$ contains at least 4 $K$-rational points. Then there exists a map $f : X \to \mathbb{P}^1$ of degree 3 such that $\# f(D) \leq 2$ and the discriminant curve is a $\mathbb{P}^1$.
\label{int_2}
\end{theorem}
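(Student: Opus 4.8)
The plan is to reproduce the argument of \Cref{int_geq_3}, but invoking the stronger \Cref{lev_rat} in place of \Cref{lev_gen}; this forces the complement of the model to consist of $K$-rational points, and that is where the hypothesis that $D$ contains at least four $K$-rational points is used. Let $D'$ be the set of $K$-rational points of $D$, so $\#D'\geq 4$, and let $\mathcal{X}'$ be an integral model of $X\setminus D'$. Since $D'\subseteq D$, every $\mathcal{O}_{L,S}$-point of $\mathcal{X}$ is, after enlarging $S$, an $\mathcal{O}_{L,S}$-point of $\mathcal{X}'$, so $\mathcal{X}'$ still has infinitely many integral cyclic cubic points. As $D'$ is $K$-rational and $\#D'>3$, \Cref{lev_rat}(2) applies with $d=3$: all but finitely many of these cyclic cubic points arise as fibres $f^{-1}(t)$ with $t\in\mathbb{P}^1(K)$ of $K$-rational morphisms $f:X\to\mathbb{P}^1$ of degree at most $3$ satisfying $D'\leq f^*(0)+f^*(\infty)$. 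A fibre over a rational point that is a cyclic cubic point has degree exactly $3$, so only such $f$ of degree $3$ matter.

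Next I would establish that, up to automorphisms of $\mathbb{P}^1$, only finitely many degree $3$ morphisms $f:X\to\mathbb{P}^1$ satisfy $D'\leq f^*(0)+f^*(\infty)$. Since $f^*(0)$ and $f^*(\infty)$ are disjoint, such an $f$ induces a decomposition $D'=A+B$ with $A\leq f^*(0)$, $B\leq f^*(\infty)$ and $1\leq\deg A,\deg B\leq 3$, of which there are only finitely many. For a fixed decomposition the fibre class $[f^*(0)]=[f^*(\infty)]\in\mathrm{Pic}^3(X)$ lies in $V_A\cap V_B$, where $V_A$ and $V_B$ are the images in $\mathrm{Pic}^3(X)\cong J_X$ of the effective divisors of degree $3$ that are $\geq A$, respectively $\geq B$; these are translates of Brill--Noether loci of dimensions $3-\deg A$ and $3-\deg B$, whose sum is $6-\deg D'\leq 2=\dim J_X$. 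In genus $2$ the only such loci are a point, the theta divisor $W_1\cong X$, or all of $\mathrm{Pic}^2$, and since $W_1$ is irreducible and not a translate of an abelian subvariety one can check that $V_A\cap V_B$ is finite; the sole borderline case, $V_A=V_B$, forces $[A]=[B]$, and then $f^*(0)\sim f^*(\infty)$ would place a point in both fibres, so no such $f$ exists. Because a base-point-free degree $3$ linear system on a genus $2$ curve determines $f$ up to $\mathrm{Aut}(\mathbb{P}^1)$, this leaves finitely many $f$.

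Combining the two steps and pigeonholing, a single degree $3$ morphism $f:X\to\mathbb{P}^1$ accounts for infinitely many integral cyclic cubic points of $\mathcal{X}$. Each such point is the Galois orbit of a cyclic cubic point which, being integral for $\mathcal{X}$, avoids $D$ in every good reduction; hence its image $t\in\mathbb{P}^1(K)$ avoids $f(D)$ in every good reduction, so $\mathbb{P}^1\setminus f(D)$ has infinitely many $\mathcal{O}_{K,S}$-points and, by Siegel's theorem, $\#f(D)\leq 2$. Finally, as in the proof of \Cref{mapinf}, such a point being cyclic cubic means the discriminant of $f$ is a square at $t$, so $t$ lifts to a $K$-point of $\mathbb{P}^1_{\Delta(f)}$ that likewise avoids the preimage of $f(D)$, which is nonempty since $D\neq\emptyset$. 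Thus $\mathbb{P}^1_{\Delta(f)}$ minus a nonempty finite set has infinitely many $\mathcal{O}_{K,S}$-points; by Siegel's theorem it has genus $0$, and carrying a $K$-rational point it is a $\mathbb{P}^1$.

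The main obstacle is the finiteness statement of the second step. For $g\geq 3$ in \Cref{int_geq_3} one simply uses that $X$ has finitely many degree $3$ maps to $\mathbb{P}^1$, but in genus $2$ every degree $3$ divisor class moves in a pencil, so there are infinitely many such maps; it is precisely the assumption of four $K$-rational points in $D$ that forces $\dim V_A+\dim V_B\leq\dim J_X$ and cuts the constraint down to a finite intersection. The care lies in verifying that the Brill--Noether loci involved contribute no positive-dimensional excess intersection, which uses that the theta divisor of a genus $2$ Jacobian is irreducible with trivial translation stabiliser, together with the routine bookkeeping relating $\mathcal{X}$ to $\mathcal{X}'$ and $D$ to $D'$.
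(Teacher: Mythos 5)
Your proposal is correct and follows the same overall architecture as the paper's proof: pass to the model $\mathcal{X}'$ punctured only at the four rational points, invoke \Cref{lev_rat}(2) with $d=3$, show there are only finitely many degree $3$ morphisms $f$ with $D'\leq f^*(0)+f^*(\infty)$, pigeonhole onto one of them, and then run Siegel's theorem twice to get $\#f(D)\leq 2$ and rationality of the discriminant curve. The one place where you genuinely diverge is the finiteness-of-morphisms step, which is the crux. The paper argues entirely on the curve: it splits into the $3+1$ and $2+2$ distributions of $P_1,\dots,P_4$ between $f^*(0)$ and $f^*(\infty)$, and in the $2+2$ case manipulates the explicit linear equivalence $Q_1-Q_2\sim(P_3+P_4)-(P_1+P_2)$ to show two distinct solutions would force $P_1+P_2+Q_1$ to acquire a base point. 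You instead push everything into $\mathrm{Pic}^3(X)$ and observe that the fibre class lies in an intersection of translates of Brill--Noether loci whose dimensions sum to $\dim J_X$, with the only positive-dimensional excess being two coincident translates of the theta divisor, excluded because $[A]=[B]$ would put a common point in both fibres (equivalently, create a base point). The two arguments are mathematically equivalent --- your $V_A=V_B$ degeneration is exactly the paper's ``$P_1+P_2$ moves'' case --- but yours is more conceptual and would generalise more readily (e.g.\ to other degrees or genera where the dimension count still closes), while the paper's is more elementary and self-contained. Both are complete; the remaining bookkeeping (reduction to $\mathcal{X}'$, exclusion of degree $\leq 2$ maps, integrality of the pushed-forward and lifted points) matches the paper.
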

\begin{proof}
Let $\mathcal{X}'$ be an integral model for $X \setminus D'$, where $D'$ consists of the 4 rational points in the support of $D$, such that $\mathcal{X}$ is an open subset of $\mathcal{X}$. By \Cref{lev_rat}, the integral cubic points on $\mathcal{X}'$, with finitely many exceptions, come from morphisms $f : X \to \mathbb{P}^1$, with $f(D') \subset \{0, \infty\}$ of degree at most 3. As in the case of rational points, these must have degree exactly 3.  

We claim there are finitely many such $f$. Let the 4 rational points be $P_1, P_2, P_3$ and $P_4$. After possibly replacing $f$ by $\frac{1}{f}$ and relabelling the $P_i$, we may assume $f^*(\infty) = P_1 + P_2 + P_3$ and $f(P_4) = 0$, or $f^*(\infty) = P_1 + P_2 + Q_1$ and $f^*(0) = P_3 + P_3 + Q_2$ for some $Q_i \in X(\bar{K})$. In the first case, $f$ is uniquely determined up to scaling, since a degree 3 divisor has at most 2 sections, and the requirement that $f(P_4) = 0$ imposes one condition. In the second, we have to show there are finitely many choices of the $Q_i$ such that $P_1 + P_2 + Q_1 \sim P_3 + P_4 + Q_2$. These two points, $Q_i$, satisfy $Q_1 - Q_2 \sim (P_3 + P_4) - (P_1 + P_2)$. If there are infinitely many choices of $Q_i$ such that this holds, we can make two different choices of such $Q_i$, say $Q_i$ and $Q_i'$, such that $Q_1' + Q_2$ does not move (equivalently, $Q_1'$ is not the hyperelliptic conjugate of $Q_2$). By assumption, $Q_1' - Q_2' \sim Q_1 - Q_2$, and so, $Q_1' + Q_2 \sim Q_1 + Q_2'$. Since $Q_1' + Q_2$ does not move, this linear equivalence is an equality, and so $Q_1 = Q_1'$ or $Q_2$. The first possibility forces $Q_2 = Q_2'$ and so they are not two different choices. For the second, this forces $P_1 + P_2 \sim P_3 + P_4$, in particular $P_1 + P_2$ moves. However, the divisor $P_1 + P_2 + Q_1$ then has a basepoint, and so there is no $f$ of degree 3 with the desired zeroes and poles.

We now restrict to integral cubic points on $\mathcal{X}$. These are a subset of those of $\mathcal{X}'$, and so also come from these maps. Moreover, for their to be infinitely many integral points on the corresponding $\mathbb{P}^1$s inside $\Sym^3 \mathcal{X}$, they must be punctured in no more than 2 places, again by Siegel's Theorem. This forces $f(D)$ to also be two points.

For infinitely many of these integral points on $\Sym^3 \mathcal{X}$ to pull back to integral points on $\mathcal{X}^3/C_3$, the discriminant curve corresponding to the morphism must be a $\mathbb{P}^1$.
\end{proof}

To avoid checking the discriminant of every degree 3 map, we give a simpler sufficient condition for finiteness of integral cyclic cubic points.

\begin{theorem}
Let $X$ be a curve of genus $g \geq 2$. If $g = 2$, assume that $D$ contains 4 $K$-rational points. If $\#D \geq 5$ and $X$ has no automorphism of order 3, then $X$ has finitely many integral cyclic cubic points.
\end{theorem}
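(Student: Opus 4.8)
The plan is a proof by contradiction. Suppose $\mathcal{X}$ has infinitely many integral cyclic cubic points, and apply the structure results already in hand: \Cref{int_geq_3} when $g \geq 3$ and \Cref{int_2} when $g = 2$, the latter being applicable precisely because $D$ is assumed to contain four $K$-rational points. Either way one obtains a $K$-rational degree $3$ morphism $f : X \to \mathbb{P}^1$ with $\#f(D) \leq 2$ whose discriminant curve $\mathbb{P}^1_{\Delta(f)}$ has genus $0$. Since $X$ has no automorphism of order $3$, the cubic cover $f$ is not Galois, even over $\bar{K}$, so $\mathbb{P}^1_{\Delta(f)}$ is geometrically irreducible and the canonical map $\pi : \mathbb{P}^1_{\Delta(f)} \to \mathbb{P}^1$ is a genuine double cover. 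I also want to carry along, from the proofs of \Cref{int_geq_3} and \Cref{int_2} rather than merely their statements, the stronger fact that $\mathbb{P}^1_{\Delta(f)}$ has infinitely many integral points with respect to the boundary divisor $\pi^{-1}(f(D))$; by Siegel's theorem this forces $\#\pi^{-1}(f(D)) \leq 2$.

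The next step is to read off the ramification of $f$ from the genus constraint on $\mathbb{P}^1_{\Delta(f)}$. A standard local computation of the discriminant shows that $\Delta$ vanishes to order exactly $1$ over each branch point of ramification type $(2,1)$ and to order $2$ over each totally ramified point, so, modulo squares, $\Delta$ is a constant times the product of the linear forms cutting out the type-$(2,1)$ branch points. Let $r$ be the number of these; Riemann--Hurwitz gives $r + 2s = 2g+4$ for $s$ the number of totally ramified fibres, so $r$ is even, the branch locus of $\pi$ is exactly those $r$ points (in particular $\pi$ is unramified over $\infty$), and hence $\mathbb{P}^1_{\Delta(f)}$ has genus $r/2 - 1$. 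Genus $0$ forces $r = 2$: the morphism $f$ has exactly two fibres of type $(2,1)$, over points $a_1, a_2 \in \mathbb{P}^1$, and $\pi$ is ramified precisely over $\{a_1, a_2\}$. (This is the refinement of \Cref{weier} mentioned after \Cref{int_geq_3}; the $g+1$ totally ramified fibres it also yields will not be used here.)

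The argument closes with a short count. Since every fibre of $f$ has at most three points and $\#D \geq 5$, one must have $\#f(D) = 2$; write $f(D) = \{q_1, q_2\}$, so $D \subseteq f^{-1}(q_1) \cup f^{-1}(q_2)$. If some $q_i$ were a totally ramified point of $f$, then $\#D \leq 1 + 3 = 4$, a contradiction; so each $f^{-1}(q_i)$ has two or three points. Now distinguish cases by how many of $q_1, q_2$ lie in $\{a_1, a_2\}$. If neither does, then $\pi$ is unramified over both, so $\#\pi^{-1}(f(D)) = 4$; if exactly one does, $\#\pi^{-1}(f(D)) = 3$; both contradict $\#\pi^{-1}(f(D)) \leq 2$. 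If both $q_1$ and $q_2$ lie in $\{a_1, a_2\}$, then the fibres $f^{-1}(q_1), f^{-1}(q_2)$ are both of type $(2,1)$ and so have two points each, giving $\#D \leq 4$ and contradicting $\#D \geq 5$. Every case is impossible, so $\mathcal{X}$ has only finitely many integral cyclic cubic points.

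I do not expect a single serious obstacle: the result is a corollary of \Cref{int_geq_3}, \Cref{int_2} and Siegel's theorem, glued together by the elementary discriminant computation. The point that will need the most care is in the first paragraph — one must genuinely invoke the proofs of \Cref{int_geq_3} and \Cref{int_2} in order to retain the bound $\#\pi^{-1}(f(D)) \leq 2$ on the number of punctures of $\mathbb{P}^1_{\Delta(f)}$, since it is this bound, and not merely the fact that the discriminant curve has genus $0$, that the final count pits against the hypothesis $\#D \geq 5$.
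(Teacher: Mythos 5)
Your proof is correct and follows essentially the same route as the paper: reduce via \Cref{int_geq_3} and \Cref{int_2} to a degree $3$ map with genus-$0$ discriminant curve, use the absence of an order-$3$ automorphism to see $\pi$ is a genuine double cover, and then derive a contradiction by showing the punctured discriminant curve violates Siegel's theorem. Your case analysis (organized around the two simple branch points and the two points of $f(D)$) is a slight repackaging of the paper's (organized around whether points of $D$ lie over branch points of $f$), but the substance is identical, including the correctly flagged need to extract the ``infinitely many integral points on the punctured discriminant curve'' statement from the proofs rather than the statements of the earlier theorems.
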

\begin{proof}
By Theorems \ref{int_geq_3}, \ref{int_2}, if $X$ has infinitely many integral cyclic cubic points, they come from morphisms $f : X \to \mathbb{P}^1$ of degree 3 and with rational discriminant curve. As $X$ has no automorphism of order 3, the discriminant curve must be a double cover of the base $\mathbb{P}^1$ ramified at 2 points. It remains to show that removing the points of the discriminant curve (inside $X^3 / C_3$) with support in $D$ punctures it in 3 or more places, as there are only finitely many integral points on such curves by Siegel's Theorem. 

If a point of $D$ does not lie above a branch point of $f$, then removing the corresponding fibre removes 2 points of the discriminant curve. There are at least 2 more points in $D$ that are not in this fibre. Removing the fibres containing them removes at least 1 more point.

If every point of $D$ lies above a branch point of $f$, then each fibre of $f$ which meets $D$ consists of at most 2 points. This shows $f(D)$ is at least 3 points, and so removing the fibres punctures the discriminant curve in at least 3 places.  
\end{proof}

As in Theorem~\ref{gal}, we can use Galois-theoretic assumptions to quickly check the hypotheses of Proposition~\ref{weier}. 

\begin{theorem}
Let $X$ be a curve of genus $g \geq 3$ such that the Galois action on the Weierstrass points of $X$ is 2-transitive. Then for any integral model, $\mathcal{X}$, of a proper open subset of $X$, $\mathcal{X}$ has only finitely many integral cyclic cubic points.
\end{theorem}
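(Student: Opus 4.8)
The plan is to reduce, via \Cref{int_geq_3}, to the existence of a degree $3$ morphism $X \to \mathbb{P}^1$ with genus-zero discriminant curve, and then to re-run the Riemann--Hurwitz argument from the proof of \Cref{gal}. So suppose for contradiction that $\mathcal{X}$ has infinitely many integral cyclic cubic points. Since $g \geq 3$, \Cref{int_geq_3} provides a degree $3$ morphism $f : X \to \mathbb{P}^1$ whose discriminant curve $\mathbb{P}^1_{\Delta(f)}$ is a $\mathbb{P}^1$. As in the proof of \Cref{weier}, $f$ has $2g + 4$ ramification points counted with multiplicity; but because the double cover $\mathbb{P}^1_{\Delta(f)} \to \mathbb{P}^1$ is branched exactly over the simple branch points of $f$ and has genus $0$, there can be at most $2$ ramification points of index $2$. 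Hence at least $g + 1$ of the ramification points, say $P_1, \dots, P_{g+1}$, have index $3$, so $3P_i$ is a fibre of $f$ and in particular $3P_i \sim 3P_j$ for all $i, j$.

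Next I would observe that each $P_i$ is a Weierstrass point: the function with divisor $3P_i - 3P_j$ (for $i \neq j$) has pole divisor exactly $3P_i$, so $h^0(3P_i) \geq 2$ with $3 \leq g$, forcing a non-gap at $P_i$ below the genus. The $2$-transitivity hypothesis then takes over, exactly as in the proof of \Cref{gal}: from $3P_1 \sim 3P_2$ with $P_1 \neq P_2$ both Weierstrass, and the fact that for every ordered pair of distinct Weierstrass points $(Q, Q')$ there is a Galois element carrying $(P_1, P_2)$ to $(Q, Q')$, applying that element to $3P_1 \sim 3P_2$ gives $3Q \sim 3Q'$; in particular $3Q \sim 3P_1$ for every Weierstrass point $Q$. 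By Clifford's theorem the pencil $|3P_1|$ is one-dimensional (a $g^2_3$ is impossible on a curve of genus $\geq 3$), and it is base-point free, since its base locus could only be supported at $P_1$ whereas the member $3P_2$ avoids $P_1$. Thus $|3P_1|$ defines a degree $3$ map $X \to \mathbb{P}^1$ which is totally ramified at each of the at least $2g + 2$ Weierstrass points of $X$ (distinct Weierstrass points lie in distinct fibres, since a degree $3$ fibre containing $3Q$ equals $3Q$). Riemann--Hurwitz then gives $2g - 2 \geq -6 + 2(2g+2) = 4g - 2$, which is impossible, so $\mathcal{X}$ has only finitely many integral cyclic cubic points.

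The argument is short because the real work is carried by \Cref{int_geq_3} (hence ultimately Levin's theorems) and by \Cref{gal}; the only points that need care are the sharper ramification count coming from the genus $0$ rather than genus $\leq 1$ discriminant curve, and the verification that the pencil $|3P_1|$ genuinely defines a degree $3$ cover totally ramified over the whole Weierstrass locus — this is where the one-dimensionality and base-point-freeness checks matter, and I expect that to be the only place a reader might want more detail. I would also emphasise that, in contrast with \Cref{gal}, no hypothesis on the Jacobian of $X$ nor on Bombieri--Lang is required here, because in the integral setting \Cref{int_geq_3} produces the morphism to $\mathbb{P}^1$ directly without passing through $W^{(3)}_X$.
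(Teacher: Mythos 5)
Your proof is correct and follows essentially the same route as the paper's: reduce via \Cref{int_geq_3} to a degree $3$ map with rational discriminant curve, extract (at least) $g+1$ triply ramified points as in the modified \Cref{weier}, and then run the $2$-transitivity and Riemann--Hurwitz contradiction from \Cref{gal}. The only difference is that you write out in full the details the paper leaves implicit (the sharper ramification count for a genus $0$ discriminant curve, and the base-point-freeness and one-dimensionality of $|3P_1|$), all of which check out.
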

\begin{proof}
If $X$ has infinitely many integral cyclic cubic points, then there exists a map $f : X \to \mathbb{P}^1$ with discriminant curve $\mathbb{P}^1_{\Delta(f)}$, which is a $\mathbb{P}^1$. By Proposition~\ref{weier}, $X$ has $g + 1$ points $P_1, ..., P_{g + 1} \in X(\bar{K})$ with $3P_i \sim 3P_j$. However, by the same argument as Theorem~\ref{gal}, there are $2g + 2$ such linearly equivalent points, which would contradict the Riemann-Hurwitz formula.
\end{proof}

We close with an example.
\begin{theorem}
For any set of primes, $S$, there exist only finitely many elliptic curves defined over cyclic cubic number fields with a 43-isogeny and potential good reduction at every prime, except those lying above $S$.
\end{theorem}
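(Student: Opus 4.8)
The plan is to reinterpret the statement as a finiteness result for integral cyclic cubic points on an open subset of the modular curve $X_0(43)$, and to run it through \Cref{int_geq_3} together with the refinement of \Cref{weier}. The curve $X_0(43)$ has genus $3$ and is not hyperelliptic, so its canonical model is a smooth plane quartic; I would fix such a model, for instance the one in \texttt{Magma}'s \texttt{SmallModularCurve} database \cite{mag}. The forgetful map $X_0(43)\to X(1)=\mathbb{P}^1_j$ has fibre over $j=\infty$ equal to the two cusps of $X_0(43)$, both $\mathbb{Q}$-rational, and an elliptic curve over a number field $L$ carrying a cyclic $43$-isogeny has potential good reduction at a place $\mathfrak{p}$ of $L$ precisely when its $j$-invariant is $\mathfrak{p}$-integral. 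Hence, after enlarging $S$ to contain $43$ (harmless, since finiteness for this larger $S$ implies it for the original), the elliptic curves over cyclic cubic fields with a $43$-isogeny and potential good reduction away from the places above $S$ are exactly the integral cyclic cubic points of an integral model $\mathcal{X}$ of the proper open subset $X_0(43)\setminus\{\text{cusps}\}$, whose complement $D$ has $\#D=2\ge 1$. The theorem thus asserts that $\mathcal{X}$ has only finitely many integral cyclic cubic points.

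Since $g(X_0(43))=3$, \Cref{int_geq_3} applies: if $\mathcal{X}$ had infinitely many integral cyclic cubic points, then $X_0(43)$ would admit a degree $3$ morphism $f$ to $\mathbb{P}^1$ whose discriminant curve is a $\mathbb{P}^1$ (the requirement $\#f(D)\le 2$ being automatic as $\#D=2$). The refinement of \Cref{weier} recorded after \Cref{int_geq_3}, incorporating the stricter genus-$0$ bound on the discriminant curve, then forces the existence of $g+1=4$ distinct points $P_1,\dots,P_4\in X_0(43)(\bar{\mathbb{Q}})$ with $3P_i\sim 3P_j$ for all $i,j$; these are Weierstrass points of $X_0(43)$, i.e.\ flexes of the plane quartic. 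In particular there would then be two distinct flexes $P\ne Q$ with $3P\sim 3Q$, equivalently with $[P]-[Q]$ a nonzero $3$-torsion class in $J_0(43)$.

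The remaining, and main, step is to rule this out by computation; it suffices (and I expect this already to hold) to verify that no two distinct flexes $P\ne Q$ satisfy $[3P-3Q]=0$ in $J_0(43)$. Concretely I would compute the flex scheme as the intersection of the quartic with the Hessian of its defining form --- a degree-$24$ zero-dimensional scheme defined over $\mathbb{Q}$, obtained by Gr\"obner bases in \texttt{Sage} \cite{sag} or \texttt{Magma} \cite{mag} --- form the image of the difference map $(P,Q)\mapsto [P]-[Q]$ on pairs of flexes inside $J_0(43)$, and check that this zero-dimensional subscheme meets the explicitly computable subgroup $J_0(43)[3]$ only at the origin (which comes from the diagonal); equivalently one can reduce modulo a prime of good reduction and check the relations among differences of flexes over a finite field. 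This contradicts the previous paragraph, so $\mathcal{X}$ has only finitely many integral cyclic cubic points.

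I expect the genuine difficulty to lie in this final computation: the flexes generate a potentially large extension of $\mathbb{Q}$ (its Galois group is forced into the centraliser of the involution induced by the Atkin--Lehner automorphism $w_{43}$, but is otherwise expected to be large), so the Jacobian arithmetic must be organised with some care --- either symbolically over $\mathbb{Q}$ by elimination, or after reduction modulo a well-chosen prime --- and one must separately confirm that the chosen quartic model of $X_0(43)$ is correct and that its flex scheme behaves as expected. If, contrary to expectation, some pair of distinct flexes did satisfy $3P\sim 3Q$, one would fall back either to checking the full four-flex condition supplied by the refined \Cref{weier}, or to enumerating the finitely many $\mathbb{Q}$-rational degree $3$ maps on $X_0(43)$ --- these correspond to the $\mathbb{Q}$-rational points of $X_0(43)$, namely the two cusps together with the finitely many CM points, via the base point of $|K_{X_0(43)}-\mathfrak{d}|$ for the corresponding $g^1_3$ $\mathfrak{d}$ --- and computing the discriminant curve of each directly.
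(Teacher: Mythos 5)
Your reduction is exactly the paper's: pass to $S$-integral points on $X_0(43)\setminus\{\text{cusps}\}$, invoke \Cref{int_geq_3} to produce a degree $3$ map $f:X_0(43)\to\mathbb{P}^1$ with rational discriminant curve, and then use the strengthened form of \Cref{weier} to get $g+1=4$ flexes $P_1,\dots,P_4$ with $3P_i\sim 3P_j$. The divergence, and the gap, is in how you kill this configuration. You propose to verify computationally that \emph{no} two distinct flexes $P\neq Q$ satisfy $3P\sim 3Q$, but you have not carried out this computation, and the statement you want it to return is stronger than what is needed and is not obviously true: the $24$ Weierstrass points of $X_0(43)$ come in $12$ Galois-stable pairs, and nothing in your argument (or the paper's) excludes the possibility that a point and its partner satisfy the relation $3P\sim 3Q$. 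If they do, your primary computation simply fails and you are thrown onto your fallback of checking the full four-point condition; as written, the proof therefore rests on an unverified expectation plus a sketched contingency, which is not a complete argument.

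The paper avoids the Jacobian computation entirely. It computes only the Galois orbit structure: Galois acts on unordered pairs of Weierstrass points with exactly two orbits, the $12$ marked pairs and the remaining $264$. Among $P_2,P_3,P_4$ at most one can be the marked partner of $P_1$, so some unmarked pair $(P_1,P_3)$ satisfies $3P_1\sim 3P_3$; transitivity on unmarked pairs then forces $3P_1\sim 3Q$ for $23$ values of $Q$, so the one-dimensional system $|3P_1|$ would give a degree $3$ map with $23$ triple ramification points, contradicting Riemann--Hurwitz (which allows at most five). This is the same mechanism as \Cref{gal}, adapted to a non-$2$-transitive action, and it requires only a Galois group computation on the flex scheme rather than arithmetic in $J_0(43)$ over the (large) field generated by the flexes. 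If you want to salvage your approach, replace "no two distinct flexes satisfy $3P\sim 3Q$" by "no \emph{unmarked} pair does", and note that this weaker statement follows from the orbit structure without any explicit $3$-torsion computation.
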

\begin{proof}
The elliptic curves with this property are those with $S$-integral $j$-invariant. These are parameterised by the open modular curve $Y_0(43)$. The compactification, $X_0(43)$, is obtained by adding two points and is a plane quartic curve. 
The Weierstrass points of $X_0(43)$ can be computed from an explicit model using the Hessian determinant. For this curve, there are 24 Weierstrass points, which come in 12 pairs. The action of Galois is not 2-transitive, but there are only 2 orbits when acting on pairs, the 12 marked pairs, and the rest. We now argue as in the previous theorem. Since any $f : X_0(43) \to \mathbb{P}^1$ with rational discriminant curve has at least $3 + 1$ triple ramification points (which are also Weierstrass points), $P_1, ..., P_4$, at most one of $P_2, P_3$ and $P_4$ is the marked pair of $P_1$. After relabelling if necessary, $P_3$ is not the pair to $P_1$. Since $3P_1 \sim 3P_3$, by the transitivity of the Galois action on the unmarked pairs, the same is true for any unmarked pair. In particular, $3P_1$ is linearly equivalent to $3Q$ for 23 different $Q \in X_0(43)(\bar{\mathbb{Q}})$. Since $|3P_1|$ is one-dimensional, the $3Q$ are all geometrically fibres of $f$, but $f$ has at most five triple ramification points (by Riemann-Hurwitz). Therefore, $X_0(43)$ does not have any morphisms to $\mathbb{P}^1$ with rational discriminant curve, and so $Y_0(43)$ only has finitely many cyclic cubic $S$-integral points.
\end{proof}

\printbibliography

\end{document}